\newcommand{\eps}{\varepsilon}
\newcommand \commentout[1] {}
\newcommand{\R}{\mathbb{R}}
\newcommand{\N}{\mathbb{N}}
\newcommand {\Chi} {{\bf \raise 2pt \hbox{$\chi$}} }
\newcommand {\p}   {\partial}
\newcommand*{\dd}{\mathop{\kern0pt\mathrm{d}}\!{}}
\newcommand*{\DD}{\mathop{\kern0pt\mathrm{D}}\!{}}
\DeclarePairedDelimiter{\norm}{\|}{\|}
\theoremstyle{plain}
\newtheorem{theorem}{Theorem}[section]
\newtheorem{lemma}[theorem]{Lemma}
\newtheorem{proposition}[theorem]{Proposition}
\newtheorem{Ass}[theorem]{Assumption}
\theoremstyle{remark}
\newtheorem{remark}[theorem]{Remark}
\DeclareMathOperator{\DIV}{div}
\newcommand{\beq}{\begin{equation}}
\newcommand{\eeq}{\end{equation}}
\newcommand{\bea} {\begin{array}{rl}}
\newcommand{\eea} {\end{array}}
\newcommand{\bepa}{\left\{ \begin{array}{l}}
\newcommand{\eepa} {\end{array}\right.}
\newcommand{\diff}{\mathop{}\!\mathrm{d}}
\numberwithin{equation}{section}
\date{}
\begin{document}
\title{Nonlocal Cahn-Hilliard equation with degenerate mobility: Incompressible limit and convergence to stationary states}
\author{Charles Elbar\thanks{Sorbonne Universit\'{e}, CNRS, Universit\'{e} de Paris, Inria, Laboratoire Jacques-Louis Lions, F-75005 Paris, France } \thanks{email: charles.elbar@sorbonne-universite.fr}
\and Beno\^it Perthame\footnotemark[1]  \thanks{email: benoit.perthame@sorbonne-universite.fr}
\and Andrea Poiatti \thanks{Dipartimento di Matematica, Politecnico di Milano, Milano 20133, Italy}\thanks{email: andrea.poiatti@polimi.it}
\and Jakub Skrzeczkowski\thanks{Institute of Mathematics, Polish Academy of Sciences and Faculty of Mathematics, Informatics and Mechanics, University of Warsaw, Poland} 
\thanks{email: jakub.skrzeczkowski@student.uw.edu.pl}
}
\maketitle
\begin{abstract}
The link between compressible models of tissue growth and the Hele-Shaw free boundary problem of fluid mechanics has recently attracted a lot of attention. In most of these models, only repulsive forces and advection terms are taken into account. In order to take into account long range interactions, we include for the first time a surface tension effect by adding a nonlocal term which leads to the degenerate nonlocal Cahn-Hilliard equation, and study the incompressible limit of the system. The degeneracy and the source term are the main difficulties. Our approach relies on a new $L^{\infty}$ estimate obtained by De Giorgi iterations and on a uniform control of the energy despite the source term. We also prove the long-term convergence to a single constant stationary state of any weak solution using entropy methods, even when a source term is present. Our result shows that the surface tension in the nonlocal (and even  local) Cahn-Hilliard equation will not prevent the tumor from completely invading the domain. 
\end{abstract}

{\bf Conflict of interest statement:} The authors have no conflicts of interest to declare that are relevant to the content of this article.\\

{\bf Data availability statement:} Data availability is not applicable to this article as no new data were created or analyzed in this study.

\noindent{\makebox[1in]\hrulefill}\newline
2010 \textit{Mathematics Subject Classification.}  35B40; 35B45; 35G20 ; 35Q92
\newline\textit{Keywords and phrases.} Degenerate Cahn-Hilliard equation;  Asymptotic analysis; Convergence to equilibrium; Incompressible limit; Hele-Shaw equations.

\section{Introduction}

Nonlocal parabolic equations are commonly used to describe living tissues because cells experience two types of forces: repulsive and attractive. The repulsion arises at high volume fraction because, locally, cells occupy a non-vanishing volume, while cell adhesion and chemotaxis create attraction at long range, i.e., low densities~\cite{BCC2011}. These effects, as well as surface tension effects, can be considered by using the Cahn-Hilliard equation (see, e.g., \cite{MR4001523} for a review on these models). Our work is dedicated to the analysis of the nonlocal Cahn-Hilliard equation for long range interactions with a repulsive potential. More precisely  we are interested in 
%
 two results: the incompressible limit connecting mechanistic and free boundary descriptions of the tissue and the long-time asymptotics of equations. Concerning the first result, the main difficulty is that we lose any maximum principle and we have to rely on different arguments to obtain the same results concerning the incompressible limit. Concerning the convergence to the stationary state, we prove that it converges to a nonnegative constant which shows that the surface tension effect (modeled by the Cahn-Hilliard equation) is not strong enough to prevent the tumor from invading the entire domain. \\ 

\subsection{Mathematical setting}
Our settings is as follows: we let $\Omega$ be the $d=1,2,3$ dimensional flat torus in $\R^d$, which is particularly useful when treating nonlocal terms and we consider the equation

\begin{equation}
\label{nonlocal}
\begin{split}
\partial_t u -\text{div}(u\nabla\mu)=uG(p)\quad &\text{in }\Omega\times (0,T),\\
\mu=p+B_\varepsilon(u)\quad  &\text{in }\Omega\times (0,T),
\end{split}
\end{equation}
with the initial condition  $u(0)=u_0\geq 0 \text{ in }\Omega$ and $u(x,t) \geq 0$ represents the cell density. Here,  $B_\varepsilon$ denotes the nonlocal operator defined as
\begin{equation}\label{operatorB}
B_\varepsilon[u](x) = \frac{1}{\varepsilon^{2}}(u(x)-\omega_{\varepsilon}\ast u(x))=\frac{1}{\varepsilon^{2}}\int_{\Omega}\omega_{\varepsilon}(y)(u(x)-u(x-y)) \diff y
\end{equation}
for fixed $\varepsilon$ small enough (in order to be able to use \cite[Lemma C.1]{elbar-skrzeczkowski} and Lemma \ref{Poincar}) and $\omega_{\varepsilon}$ is a usual mollification kernel $\omega_{\varepsilon}(x)=\frac{1}{\varepsilon^{d}}\omega(\frac{x}{\varepsilon})$ with $\omega$ compactly supported in the unit ball of $\R^{d}$ satisfying
\begin{equation}
\int_{\R^{d}}\omega(y) \diff y =1, \quad  \omega(\cdot) \text{ is radial}. 
\label{as:omega}
\end{equation}
 The pressure and source term are defined, for $u\geq 0$, as 
 \begin{equation}
p=u^{\gamma}, \quad \gamma \geq 1, \qquad   G(p)=p_H-p,  
 \label{as:source}
\end{equation}
with  $p_{H}>0$ a constant called the homeostatic pressure, which is the threshold where cells begin to die, assuming that pressure produces an inhibitory effect on cell proliferation.\\
 
We comment the different terms appearing in the equation. First, $u$ is the density of tumor cells and can be thought of as being between 0 and 1. However, this fact is not easy to prove since the maximum principle does not hold here. Using a De Giorgi iteration technique, we can prove however that the bound holds with a small perturbation term which vanishes as $\gamma\to\infty$ (see Lemma \ref{DeG}). From the Cahn-Hilliard terminology, we refer to $\mu$ as the chemical potential, which is composed by two terms: one is the pressure $p=u^\gamma$ and the other is $B_\eps$, the approximation of the Laplace operator, which takes into account surface tension effects, see for instance~\cite{elbar-skrzeczkowski}.
Concerning the initial condition, we distinguish two sets of assumptions.
\begin{Ass}[Initial condition]\label{initial1}
We assume:\\
 (A) $0\leq u_0\leq p_H^\frac 1 \gamma$ for almost any $x\in \Omega$.

\end{Ass}
Note that the same assumption has already been considered, e.g., in \cite{david-phenotypic} and that it implies $u_0\in L^q(\Omega)$, for any $q\geq 1$, since $\Omega$ is of finite Lebesgue measure. For the single Section~\ref{section::incompressible},  we need additionally:

\begin{Ass}[Additional assumption for Section~\ref{section::incompressible}]\label{initial2}

We assume: \\
(B) There is $\gamma_0>0$ and $C=C(\gamma_0)>0$ such that $\Vert \Delta (u_0^{1+\gamma})\Vert_{L^1(\Omega)}
    +\Vert \nabla u_0\Vert_{L^2(\Omega)}+\Vert \Delta u_0\Vert_{L^1(\Omega)}\leq C,\quad \forall \gamma\geq\gamma_0$.
\end{Ass} 

\bigskip

System~\eqref{nonlocal} is associated with the energy $\mathcal{E}$ and entropy $\Phi$, respectively defined by 
\begin{align}
&\mathcal{E}(u):= \frac{1}{4\varepsilon^2}\int_\Omega\int_\Omega \omega_\varepsilon(y)\vert u(x)-u(x-y) \vert^2 \diff  y \diff x+\int_\Omega \dfrac{u^{\gamma+1}}{\gamma+1} \diff x \geq 0, \label{eq:energy}
\\
&\Phi(u):=\int_\Omega \left(\frac{u}{p_H^{\frac 1 \gamma}}\log\left(\frac{u}{p_H^{\frac 1 \gamma}}\right)-\frac{u}{p_H^{\frac 1 \gamma}}+1\right) \diff x \geq 0.\label{eq:entropy}
\end{align}
They formally satisfy the identities
\begin{equation}
\label{eq:energy_diss}
\frac{\diff}{\diff t}\mathcal{E}(u)+\int_{\Omega}u|\nabla\mu|^{2}=\int_{\Omega}u\mu G(p),
\end{equation}
\begin{align}
\nonumber \frac{\diff}{\diff t}\Phi(u)&+\frac{1}{2\varepsilon^2p_H^{\frac 1 \gamma}} \int_\Omega\int_\Omega \omega_\varepsilon(y)\vert \nabla u(x)-\nabla u(x-y) \vert^2\diff x \diff y\\&+\frac{1}{p_H^{\frac 1 \gamma}}\int_\Omega \dfrac{4\gamma}{(\gamma+1)^2}\left\vert \nabla \vert u\vert^{\frac{\gamma+1}{2}}\right\vert^2 \diff x -\frac{1}{p_H^{\frac 1 \gamma}}\int_\Omega u\log\left(\frac{u}{p_H^{\frac 1 \gamma}}\right)G(p)\diff x =0.
\label{entropy}
\end{align}
Moreover they provide us with direct a priori estimates, provided we can control the integral related to the source term in \eqref{eq:energy_diss}, which may change sign. 
Here, we assume that we have existence of solutions with regularity typical of the Cahn-Hilliard equation. We do not include the proof, since most of the a priori estimates are derived in Section \ref{sec:energy_estim}. For a rigorous proof of existence by means of an approximating scheme, we refer for instance to~\cite{elbar-skrzeczkowski}.
\begin{lemma}
\label{exist}
Let $u_0$ satisfy assumption  \eqref{initial1}. Then, for any $T>0$, there exist constants $C_0(T,\mathcal{E}({u_0}),\Phi(u_0),\varepsilon)$ and $C_1(T,\mathcal{E}({u_0}),\Phi(u_0),\varepsilon,\gamma)$ and a global weak solution $u$ such that,
\begin{align}
&u\geq 0\quad \text{a.e. in }\Omega\times(0,\infty),
\\
&u\in {C([0,\infty);(W^{1,r}(\Omega))^\prime)}\cap C_{weak}([0,\infty);L^{\gamma+1}(\Omega)),\quad r=\frac{(\gamma+1)(2\gamma+1)}{\gamma^2},\label{continuity}
\\
&\overline{u}(t):= \frac{1}{|\Omega|}\int_{\Omega} u(x,t) \diff x \leq p_H^\frac 1 \gamma\quad \forall t\geq 0, \label{mass}
\\
& 
\Vert u\Vert_{L^2(0,T;H^1(\Omega))} +\frac{1}{\gamma+1}\Vert u \Vert^{\gamma+1}_{L^\infty(0,T; L^{\gamma+1}(\Omega))}+\Vert u\Vert_{L^{2\gamma+1}(\Omega\times(0,T))}\leq C_0\label{unifm},
\\
& \Vert \partial_t u\Vert_{L^{2}(0,T;(W^{1,r}(\Omega))^\prime)}+\Vert \partial_t u\Vert_{L^{q^\prime}(0,T;(W^{1,q}(\Omega))^\prime)}\leq C_1, \label{dtes}
\end{align}
where $\frac 1 q +\frac{1}{q^\prime}=1$, $q=\frac{2(2\gamma+1)}{\gamma}$.
Moreover, for any $v\in W^{1,r}(\Omega)$, $u$ satisfies
\begin{align}
 \nonumber\langle\partial_t u,v\rangle_{(W^{1,r}(\Omega))',W^{1,r}(\Omega))}&+\int_\Omega u\nabla p\cdot \nabla v \diff x+\frac{1}{\varepsilon^2}\int_\Omega u\nabla u\cdot     \nabla v \diff x -\frac{1}{\varepsilon^2}\int_\Omega u(\nabla \omega_\varepsilon\ast u) \cdot \nabla v 
 \diff x \\&=\int_\Omega uG(p)v \diff x,  \qquad \text{for almost every } t >0,
 \label{weakf}
\end{align}
with $u(0)=u_0$ almost everywhere in $\Omega$. Here $\langle\cdot,\cdot\rangle_{(W^{1,r}(\Omega))',W^{1,r}(\Omega)}$ denotes the duality product between $(W^{1,r}(\Omega))'$ and $W^{1,r}(\Omega)$.
\end{lemma}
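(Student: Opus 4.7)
The plan is to construct a solution via a regularization that removes the degeneracy, derive bounds uniform in the regularization parameter using the identities (1.5)--(1.6), and then pass to the limit. Concretely, I would first replace the mobility $u$ by $u_\delta := \max(u,\delta)$ (or $u+\delta$) and cut the source by $G_\delta(p) = p_H - T_{1/\delta}(p)$ where $T_K$ is truncation at level $K$. For fixed $\delta$ and $\varepsilon$ the equation becomes non-degenerate and quasilinear of fourth order type with a nonlocal Laplacian-like term, so existence of a weak solution $u^\delta$ on $[0,T]$ can be obtained by a Galerkin scheme as in \cite{elbar-skrzeczkowski}: approximate $u_0$ and $\omega_\varepsilon$, solve the finite-dimensional ODE, and use the linear non-degenerate structure to get global-in-time existence. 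The key point is that, unlike the local Cahn-Hilliard case, the term $B_\varepsilon[u]$ is a bounded operator on $L^2$, so the Galerkin basis only has to diagonalise the mobility part.

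Next I would derive the listed a priori estimates, uniformly in $\delta$. Nonnegativity follows by testing the equation with $-(u^\delta)^-$ after noting that $u^\delta = 0$ makes the flux vanish (this is why the $u_\delta$ regularization rather than $u+\delta$ is cleaner). For the mass bound \eqref{mass}, integration over $\Omega$ gives $\tfrac{d}{dt}\overline{u} = \tfrac{1}{|\Omega|}\int_\Omega u(p_H - u^\gamma)\,\mathrm{d}x$, and by Jensen's inequality applied to the convex function $s\mapsto s^{\gamma+1}$ we get $\int_\Omega u^{\gamma+1} \geq |\Omega|\,\overline{u}^{\,\gamma+1}$, so
\begin{equation*}
\frac{d}{dt}\overline{u} \,\leq\, \overline{u}\bigl(p_H - \overline{u}^{\,\gamma}\bigr),
\end{equation*}
and ODE comparison with $\overline{u}(0) \leq p_H^{1/\gamma}$ yields \eqref{mass}. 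For the energy estimate \eqref{unifm}, one uses \eqref{eq:energy_diss}: the right-hand side $\int u\mu G(p) = p_H\int u\mu - \int up\mu$ splits into terms that are either absorbed into the dissipation by Young's inequality (the part involving $\nabla\mu$ via integration by parts on $\int up\mu$) or controlled by $\mathcal{E}(u)$ plus $\|u\|_{L^{2\gamma+1}}^{2\gamma+1}$; the $L^{2\gamma+1}$ bound itself comes from testing with $u^\gamma$ and using the nonnegativity of $\int u B_\varepsilon[u]$. The $H^1$ bound follows either from the entropy identity \eqref{entropy} (using \cite[Lemma C.1]{elbar-skrzeczkowski} to transfer the double-integral with gradients into $\|\nabla u\|_{L^2}^2$ up to a small perturbation) or more directly from $\int_0^T\int u^{\gamma-1}|\nabla u|^2$ combined with the $L^\infty_t L^{\gamma+1}_x$ bound.

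For the time derivative bounds \eqref{dtes}, I would test \eqref{weakf} against $v\in W^{1,r}(\Omega)$ and estimate each term by Hölder with the previously obtained bounds: $u\nabla p = \tfrac{\gamma}{\gamma+1}\nabla u^{\gamma+1}$ lies in $L^{q'}_t (W^{1,q}(\Omega))'$ with the stated exponents exactly because $u^{\gamma+1}\in L^{\frac{2\gamma+1}{\gamma+1}}_{t,x}$, the mobility term $u\nabla u$ lies in $L^2_{t,x}$, and the nonlocal term $u(\nabla\omega_\varepsilon\ast u)$ is controlled in $L^\infty_{t,x}$ thanks to $\|\nabla\omega_\varepsilon\|_{L^\infty}$ times the mass bound. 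The different exponents in \eqref{dtes} correspond to distributing the degeneracy differently between the mobility and the test function.

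The main obstacle is the limit $\delta\to 0$, because the flux $u_\delta \nabla p = u_\delta\nabla (u^\gamma)$ is degenerate and only known in $L^1$ a priori. I would rewrite it as $\tfrac{\gamma}{\gamma+1}\nabla(u^{\gamma+1})$ plus a correction that vanishes with $\delta$, so the limit identification is reduced to strong compactness of $u^\delta$, which I obtain from the uniform $L^2_t H^1_x$ bound combined with \eqref{dtes} via Aubin--Lions in $L^2_t L^2_x$; the nonlocal term passes trivially since $\omega_\varepsilon\ast\cdot$ is continuous on $L^2$. Weak $L^{2\gamma+1}$ compactness of $u^\delta$ then identifies $u^\gamma$ by continuity of Nemytskii maps on strongly converging sequences, and the weak continuity in time \eqref{continuity} follows from the uniform bound in $L^\infty_t L^{\gamma+1}_x$ together with \eqref{dtes} by a standard density argument.
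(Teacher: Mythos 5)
The paper does not actually prove this lemma: it explicitly assumes existence, deferring the approximating scheme to \cite{elbar-skrzeczkowski}, and derives only the a priori estimates (Propositions \ref{prop:mass_conserv}, \ref{prop:energy/entropy} and \ref{partialtu} in Section \ref{sec:energy_estim}). Your sketch follows essentially that same strategy — regularize the mobility, run the energy/entropy identities \eqref{eq:energy_diss}--\eqref{entropy}, get \eqref{mass} by Jensen plus ODE comparison, and bound $\partial_t u$ by duality — so at the level at which the paper operates it is the intended argument. Two justifications are imprecise, though neither is fatal. First, with the mobility $\max(u,\delta)$ the flux does \emph{not} vanish where $u^\delta=0$ (it equals $\delta\nabla\mu$ there), so your parenthetical reason for nonnegativity is wrong as stated; testing with $-(u^\delta)^-$ still works, but only after extending $p$ to negative arguments and absorbing the drift $\delta\varepsilon^{-2}\nabla\omega_\varepsilon\ast u$, which yields $\|(u^\delta)^-\|_{L^2}^2=O(\delta)$ and hence nonnegativity only in the limit $\delta\to0$ (alternatively one recovers $u\ge0$ from the entropy bound, as is standard for degenerate mobilities). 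Second, placing $u\nabla p$ in $L^{q'}_t(W^{1,q}(\Omega))'$ with $q=\frac{2(2\gamma+1)}{\gamma}$ does not follow from $u^{\gamma+1}\in L^{\frac{2\gamma+1}{\gamma+1}}_{t,x}$ alone; as in Proposition \ref{partialtu} you must factor $u\nabla p=\gamma^{1/2}u^{\frac{\gamma+1}{2}}\cdot\gamma^{1/2}u^{\frac{\gamma-1}{2}}\nabla u$ and use that $\nabla u^{\frac{\gamma+1}{2}}\in L^2_{t,x}$ from the entropy dissipation \eqref{ent} together with $u\in L^{2\gamma+1}_{t,x}$, which is exactly what produces the stated exponent. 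With those two points repaired your outline is consistent with the paper's.
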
  
\begin{remark}
\label{rem:weak_continuity}
The first continuity result follows from the bounds on $\p_t u$ in \eqref{dtes}.
Moreover, being $u\in L^\infty(0,T;L^{\gamma+1}(\Omega))$, we also deduce that $u\in C_{weak}([0,T];L^{\gamma+1}(\Omega))$, see \cite[Lemma II.5.9]{Boyer}. This clearly also implies that $\overline{u}=\frac{\int_\Omega u \diff x}{\vert\Omega\vert}\in C([0,T])$.
\label{continu}
\end{remark}

\subsection{The main results}
Our first result establishes the incompressible limit $\gamma \to \infty$ of the system \eqref{nonlocal} which links two descriptions of the tumor growth: mechanistic and free-boundary. The main mathematical novelty here is the nonlocality which makes it difficult to establish the uniform $L^{\infty}$ bound on~$u_{\gamma}$. To overcome this problem, we apply the De~Giorgi iterations, see Lemma \ref{DeG}, in the spirit of \cite{GGGsep, Poiatti2022}.

\begin{theorem} [Incompressible limit] \label{thm:incompressible}
Let $u_{\gamma}$ be a weak solution to ~\eqref{nonlocal} as defined in Lemma~\ref{exist} and initial datum satisfying Assumptions \ref{initial1}-\ref{initial2}.
Then, as $\gamma \to \infty$,  we have for all $T>0$, up to a (not relabeled) subsequence
\begin{align*}
&u_\gamma\overset{\ast} {\rightharpoonup}u_\infty\text{ in }L^\infty(\Omega\times(0,\infty)),\\&
u_\gamma\to u_\infty \text{ in }L^q(\Omega\times(0,T))\quad \forall q\in[2,+\infty), \\&
u_\gamma\rightharpoonup u_\infty \text{ in }L^2(0,T;H^1(\Omega)), \\&
\partial_t u_\gamma\rightharpoonup \partial_t u_\infty \text{ in }L^2(0,T;(H^1(\Omega))'),
\\&
p_\gamma\rightharpoonup p_\infty \text{ in }L^2(0,T;H^1(\Omega)),\\&
p_\gamma\to p_\infty \text{ in }L^r(\Omega\times(0,T)),\quad \forall r\in[2,3),
\end{align*}
where $u_{\infty}$ and $p_{\infty}$ satisfy in $\mathcal{D}'(\Omega\times [0,\infty))$
\begin{align}
&\partial_t u_\infty -\DIV(u_\infty \nabla(p_{\infty}+B_\varepsilon(u_\infty)))=u_\infty G(p_\infty),\label{incom}
\\
&p_\infty \Big(\Delta {p}_\infty+\frac{1}{2\varepsilon^2} \Delta u_\infty^2-\frac{1}{\varepsilon^2}\DIV(u_\infty(\nabla\omega_\varepsilon\ast u_\infty))+u_\infty G(p_\infty) \Big)=0,\label{eq:incompressible_pressure}
\\
& 0\leq u_\infty\leq 1, \quad p_\infty \geq 0, \quad p_{\infty}(1-u_{\infty})=0  \quad \text{almost everywhere in }  \Omega\times (0,\infty),
\label{eq:incompressible_graph}
\end{align}
with $u_\infty(0)=u_0$. Furthermore it holds $\langle \partial_t u_\infty, p_\infty\rangle_{(H^1(\Omega))',H^1(\Omega))}=0$ for almost any $t\in (0,\infty)$.
\end{theorem}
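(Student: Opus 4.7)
The strategy is the classical three-step incompressible limit: (i) derive $\gamma$-uniform bounds; (ii) extract convergent subsequences and pass to the limit in the weak formulation; (iii) identify the complementarity relation $p_\infty(1-u_\infty)=0$ and the pressure equation. The novelty here is handling the nonlocal operator $B_\varepsilon$ together with the source term, while keeping all constants independent of $\gamma$.

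\textbf{Step 1: uniform estimates.} First I invoke Lemma \ref{DeG} (the De Giorgi step) to get $u_\gamma\le 1+\delta_\gamma$ with $\delta_\gamma\to 0$, which immediately yields a $\gamma$-uniform $L^\infty(\Omega\times(0,\infty))$ bound and, after passing to the limit a.e., the constraint $u_\infty\le 1$. Next, starting from the energy identity \eqref{eq:energy_diss} and the entropy identity \eqref{entropy}, and using assumption \ref{initial2} to control $\mathcal{E}(u_0)$ and $\Phi(u_0)$ uniformly in $\gamma$, together with Young's and Gr\"onwall's inequalities applied to the source-term contribution (which is $L^1$ bounded thanks to the $L^\infty$ bound), I obtain:
\begin{equation*}
\sup_\gamma\Bigl(\|u_\gamma\|_{L^2(0,T;H^1)}+\|p_\gamma\|_{L^2(0,T;H^1)}+\|\sqrt{u_\gamma}\nabla\mu_\gamma\|_{L^2(Q_T)}+\|\partial_t u_\gamma\|_{L^2(0,T;(H^1)')}\Bigr)<\infty.
\end{equation*}
The $H^1$ bound on $p_\gamma$ comes from $\nabla p_\gamma=\gamma u_\gamma^{\gamma-1}\nabla u_\gamma$ rewritten as a commutator with $\nabla \mu_\gamma$, using the nonlocal gradient control of assumption \ref{initial2} for $\Delta(u_0^{1+\gamma})$ to propagate it in time.

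\textbf{Step 2: compactness and passage to the limit.} Aubin–Lions applied to $u_\gamma\in L^2(H^1)$ with $\partial_t u_\gamma\in L^2((H^1)')$ gives $u_\gamma\to u_\infty$ strongly in $L^2(\Omega\times(0,T))$; the $L^\infty$ bound then upgrades this to $L^q$ strong convergence for all $q<\infty$ via interpolation. For $p_\gamma=u_\gamma^\gamma$, the weak $L^2(H^1)$ convergence is immediate; Sobolev embedding $H^1\hookrightarrow L^6$ in $d\le 3$ together with the $L^\infty(L^2)$ control yields a uniform bound in $L^{10/3}$, and on the set $\{u_\infty<1\}$ the a.e. pointwise limit of $p_\gamma$ is $0$, which by Vitali's theorem upgrades the convergence to strong $L^r$ for any $r<3$. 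I then pass to the limit in \eqref{weakf}: the strong $L^q$ convergence of $u_\gamma$ combined with weak $L^2(H^1)$ convergence of $p_\gamma$ and $u_\gamma^2$ handles $u_\gamma\nabla p_\gamma$ and $u_\gamma\nabla u_\gamma=\tfrac12\nabla u_\gamma^2$, while the convolution term $u_\gamma(\nabla\omega_\varepsilon*u_\gamma)$ is controlled by Young's inequality for convolution (the kernel is smooth and $\varepsilon$ is fixed). The source term $u_\gamma G(p_\gamma)$ passes to the limit by strong convergence of both factors.

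\textbf{Step 3: complementarity and the pressure equation.} Since $u_\gamma\to u_\infty$ a.e. (up to subsequence) and $u_\infty\le 1$ a.e., on the set $\{u_\infty<1\}$ one has $u_\gamma^\gamma\to 0$ pointwise; combined with the $L^r$ strong convergence of $p_\gamma$, this identifies $p_\infty=0$ on $\{u_\infty<1\}$, proving $p_\infty(1-u_\infty)=0$. The nonnegativity $p_\infty\ge 0$ is preserved in the limit. To obtain \eqref{eq:incompressible_pressure}, I would test \eqref{weakf} with $p_\gamma$ itself (an admissible test function thanks to its $L^2(H^1)$ regularity), noting that
\begin{equation*}
\int_0^T\langle\partial_t u_\gamma,p_\gamma\rangle\,\diff t=\frac{1}{\gamma+1}\int_\Omega u_\gamma(T)^{\gamma+1}\diff x-\frac{1}{\gamma+1}\int_\Omega u_0^{\gamma+1}\diff x,
\end{equation*}
which tends to $0$ as $\gamma\to\infty$ thanks to the uniform $L^\infty$ bound on $u_\gamma$. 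Passing to the limit in every other term (using the strong/weak convergences above) yields the integrated identity $\int_0^T\int_\Omega p_\infty\bigl[\Delta p_\infty+\tfrac{1}{2\varepsilon^2}\Delta u_\infty^2-\tfrac{1}{\varepsilon^2}\DIV(u_\infty\nabla\omega_\varepsilon*u_\infty)+u_\infty G(p_\infty)\bigr]\diff x\,\diff t=0$, and the corresponding pointwise equation \eqref{eq:incompressible_pressure} follows from an arbitrary time interval argument. Finally, $\langle\partial_t u_\infty,p_\infty\rangle=0$ a.e.\ in $t$ is read off from the same passage: the left-hand side above converges to $\int_0^T\langle\partial_t u_\infty,p_\infty\rangle\,\diff t$ and we just showed the right-hand side vanishes in the limit.

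\textbf{Main obstacle.} The most delicate point is the strong convergence of $p_\gamma$ in $L^r$ for $r<3$: the a priori $H^1$ estimate on $p_\gamma$ is not paired with a natural time-derivative bound, so one cannot directly invoke Aubin–Lions on $p_\gamma$. The argument relies instead on combining pointwise convergence (available only on $\{u_\infty<1\}$, which is precisely where we need it since $p_\gamma$ vanishes there) with the uniform higher integrability produced by Sobolev embedding in the spatial variable. Making this rigorous while keeping quantitative control of the remainder terms on the level set $\{u_\infty=1\}$ is the core technical step, and ultimately it is exactly what permits the passage to the limit in the nonlinear products entering \eqref{eq:incompressible_pressure}.
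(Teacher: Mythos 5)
Your skeleton (De Giorgi $L^\infty$ bound, uniform $H^1$ and $L^3$ bounds on $p_\gamma$, compactness, complementarity) matches the paper, but there are two genuine gaps, and you have in fact flagged the first one yourself without resolving it. The strong convergence $p_\gamma\to p_\infty$ in $L^r$, $r<3$, cannot be obtained from pointwise convergence on $\{u_\infty<1\}$ plus Vitali: Vitali requires a.e. convergence on the whole of $\Omega_T$, and on the set $\{u_\infty=1\}$ — precisely where $p_\infty$ may be nonzero — you have no pointwise information about $u_\gamma^\gamma$ whatsoever (note that $u_\gamma\to 1$ a.e. there says nothing about $u_\gamma^\gamma$, since the De Giorgi bound only gives $u_\gamma\le p_H^{1/\gamma}+2\gamma^{-1/3}$, and $(p_H^{1/\gamma}+2\gamma^{-1/3})^\gamma\to+\infty$). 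The paper closes this gap by proving a genuinely new estimate, $\Vert\partial_t p_\gamma\Vert_{L^1(\Omega\times(0,T))}\le C$ uniformly in $\gamma$ (Lemma \ref{ests}, estimate \eqref{bb}), obtained by differentiating the equation in time, testing with $\operatorname{sign}(\partial_t u_\gamma)$ and using Kato's inequality; this is exactly where Assumption \ref{initial2} on $\Delta(u_0^{1+\gamma})$, $\nabla u_0$, $\Delta u_0$ enters (to control $\Vert\partial_t u_\gamma(0)\Vert_{L^1}$ for the Gr\"onwall step). With that bound, Aubin--Lions--Simon applies to $p_\gamma$ and interpolation with the $L^3$ bound gives the $L^r$ convergence. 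Your Step 1 never produces this time-derivative bound, so the $L^r$ convergence claimed in the theorem is not established.

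The second gap is in your Step 3. Testing \eqref{weakf} with $p_\gamma$ produces the term $\int_{\Omega_T}u_\gamma\nabla(p_\gamma+B_\varepsilon(u_\gamma))\cdot\nabla p_\gamma$, which is quadratic in the gradients; weak $L^2$ convergence of $\nabla p_\gamma$ and $\nabla u_\gamma^2$ is not enough to pass to the limit in a product of two weakly convergent sequences (one only gets a $\liminf$ inequality from lower semicontinuity, with the wrong sign for the conclusion). This is why the paper devotes all of Lemma \ref{compactness} to proving \emph{strong} $L^2$ convergence of $\nabla\widetilde v_\gamma$ with $\widetilde v_\gamma=\frac{\gamma}{\gamma+1}u_\gamma^{\gamma+1}+\frac{1}{2\varepsilon^2}u_\gamma^2$, by testing the equation against $\widetilde v_\gamma-\widetilde v_\infty$ and showing the $\limsup$ of $\int|\nabla(\widetilde v_\gamma-\widetilde v_\infty)|^2$ vanishes. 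The most delicate ingredient there is showing $\int_0^T\langle\partial_t u_\infty,v_\infty\rangle\,\diff t=0$, which does not simply ``read off from the same passage'': it requires the convex-analysis chain rule of Theorem \ref{Har} applied to the indicator function of $(-\infty,1]$, using that $v_\infty=p_\infty$ lies in the subdifferential $\partial I_S(u_\infty)$ by the already-established graph relation \eqref{compa}. Without a counterpart to this strong-gradient-convergence step, the complementarity condition \eqref{eq:incompressible_pressure} and the identity $\langle\partial_t u_\infty,p_\infty\rangle=0$ remain unproved in your proposal.
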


This theorem entails that in the limit $\gamma\to\infty$ we can consider the measurable set  $\Omega(t):=\{x\in\Omega:\ p_\infty(t)>0\}$,  where  $u_\infty=1$ by the graph relation \eqref{eq:incompressible_graph} so that it can be interpreted as the ‘tumor zone'. Note that it must hold
\begin{align*}
-\Delta {p}_\infty-\frac{1}{2\varepsilon^2} \Delta u_\infty^2=-\frac{1}{\varepsilon^2}\DIV(u_\infty(\nabla\omega_\varepsilon\ast u_\infty))+u_\infty G(p_\infty)\quad\text{ in } {\cal I}nt (\Omega(t)) 
\end{align*}
which yields a Hele-Shaw type equation.\\

Our second result is concerned with the convergence to stationary states. We distinguish two cases: when $G(p)=p_{H}-p$ and when $G(p)=0$. The main novelty lies in the first case which is not conservative and its proof requires a careful analysis of the entropy. We prove that as $t\to\infty$, the solution converges to the constant $p_H^\frac 1 \gamma > 0$, which shows that the surface tension is not strong enough to prevent the tumor from invading the entire domain. We have the 

\begin{theorem} [Long time behaviour] \label{thm:conv_stationary}
Let $u$ be a solution to \eqref{nonlocal} with fixed $\gamma\geq1$ in the sense of Lemma \ref{exist} and initial datum satisfying Assumption \ref{initial1} Then, if $u_0\not\equiv0$, there are two cases: 
\begin{itemize}
\item for $G(p)=p_{H}-p$, we have
\begin{equation}
\Vert u(t)-p_H^\frac 1 \gamma\Vert_{L^q(\Omega)}\to 0 \text{ as }t\to \infty,\quad \forall q\in [1,\gamma+1).
\label{conv}
\end{equation}
\item For $G(p)=0$ and $\gamma\geq1$, we have an exponential decay towards the mean value: there exists a constant $C=C(\Omega,\gamma,q,\Phi(u_0),\overline{u}_{0})$ such that
\begin{equation}
\Vert u(t)-\overline{u}_{0}\Vert_{L^q(\Omega)}\lesssim e^{-Ct} ,\quad \forall q\in [1,\gamma+1).
\label{conv2}
\end{equation}
\end{itemize}
\label{thm} 
\end{theorem}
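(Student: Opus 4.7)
The plan is to apply the entropy--entropy dissipation method based on the functional $\Phi$ in \eqref{eq:entropy} and its identity \eqref{entropy}, treating the two cases separately.

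\emph{Case $G(p) = p_H - p$.} The key observation is that the source contribution on the right-hand side of \eqref{entropy},
\[
-\frac{1}{p_H^{1/\gamma}}\int_\Omega u\log(u/p_H^{1/\gamma})\,(p_H - u^\gamma)\,\diff x,
\]
is \emph{pointwise nonpositive}, since $\log(u/p_H^{1/\gamma})$ and $u^\gamma - p_H$ share the same sign. Moving this term to the left-hand side and integrating in time shows that $\Phi(u(t))$ is nonincreasing and that both the dissipation $D(u)$ and the nonnegative source functional $S(u)$ belong to $L^1(0,\infty)$. Hence along a sequence $t_n \to \infty$ one has $\int_{t_n}^{t_n+\tau}(D + S)\,\diff s \to 0$ for every $\tau > 0$. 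Using the uniform bounds of Lemma~\ref{exist} together with the bound on $\p_t u$ in \eqref{dtes}, an Aubin--Lions compactness argument applied to the shifted profiles $u(\cdot + t_n)$ extracts a limit $v(s)$ on $[0,\tau]$ that satisfies the PDE. Vanishing of $D$ on $v$, combined with the nonlocal Poincar\'e inequality (Lemma~\ref{Poincar}), forces $v(s)$ to be spatially constant for a.e.\ $s$, and vanishing of $S$ pins this constant to lie in $\{0,\,p_H^{1/\gamma}\}$.

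To exclude $v \equiv 0$, I rely on Assumption~\ref{initial1}: since $u_0 \leq p_H^{1/\gamma}$ a.e.\ and $u_0 \not\equiv 0$, and since $h(v) := v\log v - v + 1$ satisfies $h(0^+) = 1$ and is strictly decreasing on $(0,1]$ with $h(1) = 0$, we have the strict bound $\Phi(u_0) < |\Omega|$. By monotonicity $\Phi(u(t)) \to \Phi_\infty \leq \Phi(u_0) < |\Omega|$, while $\Phi(0) = |\Omega|$, so lower semicontinuity of $\Phi$ excludes the trivial limit. Hence $v \equiv p_H^{1/\gamma}$, and dominated convergence together with the $L^{\gamma+1}$ bound in \eqref{unifm} gives $\Phi(u(t_n)) \to \Phi(p_H^{1/\gamma}) = 0$; monotonicity upgrades this to $\Phi(u(t)) \to 0$ along the full sequence. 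A Csisz\'ar--Kullback--Pinsker inequality then yields $L^1$ convergence of $u(t)$ to $p_H^{1/\gamma}$, and interpolation with the uniform $L^{\gamma+1}$ bound promotes this to \eqref{conv} for every $q \in [1,\gamma+1)$.

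\emph{Case $G(p) = 0$.} Here mass is conserved, $\overline{u}(t) \equiv \overline{u}_0$, and I would use the shifted entropy
\[
\tilde\Phi(u) := \int_\Omega \Big[\tfrac{u}{\overline{u}_0}\log(u/\overline{u}_0) - \tfrac{u}{\overline{u}_0} + 1\Big]\,\diff x,
\]
which satisfies the pure dissipation identity $\frac{\diff}{\diff t}\tilde\Phi(u) + \tilde D(u) = 0$, analogous to \eqref{entropy}. The nonlocal Poincar\'e inequality combined with the standard Poincar\'e--Wirtinger inequality gives $\tilde D(u) \gtrsim \|u - \overline{u}_0\|_{L^2}^2$, and a logarithmic Sobolev type comparison, leveraging the uniform $L^{\gamma+1}$ bound, produces $\tilde D(u) \geq C\,\tilde\Phi(u)$ with a constant $C$ depending on $\Omega, \gamma, \Phi(u_0), \overline{u}_0$. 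Gr\"onwall then yields exponential decay of $\tilde\Phi(u(t))$, and the CKP plus interpolation argument above delivers \eqref{conv2}.

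The main obstacle lies in Case~1: the quantitative input that rules out the trivial stationary state is the strict bound $\Phi(u_0) < |\Omega|$, which crucially exploits both Assumption~\ref{initial1} (to ensure $u_0/p_H^{1/\gamma} \leq 1$) and the precise shape of $h$. The compactness analysis on shifted time intervals and the extraction of pointwise information from the time-integrated bounds $D, S \in L^1(0,\infty)$ form the technically delicate part. In Case~2 the difficulty is different and more quantitative, namely establishing the explicit functional inequality $\tilde D \gtrsim \tilde\Phi$ needed for the exponential rate.
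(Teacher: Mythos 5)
Your proposal is correct and follows essentially the same route as the paper: entropy monotonicity and $L^1$-in-time control of the dissipation and source functionals, time-shift compactness identifying the $\omega$-limit as a spatial constant in $\{0,p_H^{1/\gamma}\}$, exclusion of the trivial state via $\Phi(u_0)<|\Omega|$ (the paper's Lemma~\ref{lemma:conv} runs the same computation in contrapositive form), and then Csisz\'ar--Kullback--Pinsker plus interpolation; for $G=0$ the relative-entropy/log-Sobolev/Gr\"onwall scheme is likewise the paper's. Two small remarks: first, the source term you display is pointwise non\emph{negative} (it sits on the left of \eqref{entropy} as part of the dissipation), though the conclusions you draw from it are the correct ones; second, you apply CKP directly to the entropy relative to the constant $p_H^{1/\gamma}$, which requires the generalized Pinsker inequality for non-normalized reference measures (available in the cited Bolley--Villani reference but not in the form of Lemma~\ref{Kullback}), whereas the paper instead decomposes $\Phi$ as in \eqref{decomposition}, proves $\overline{u}(t)\to p_H^{1/\gamma}$ by Jensen's inequality, and then uses the mean-relative CKP --- your shortcut is legitimate but should cite the appropriate version of the inequality.
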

There are two possibilities to prevent steady states to be constant. The first one is to consider different potentials than just repulsive ones like $u^{\gamma}$. The second possibility is to include an external force, which can be taken into account either by a generic force that acts directly on the cells like it was done in~\cite{elbar-perthame-skrzeczkowski}. One can also include the effects of nutriments and impose that the tumor cells die in the regions where there are no nutriments.

\begin{remark}
		In the case of nonzero source term $G$, Theorem \ref{thm:conv_stationary} implies that the constant solution $u \equiv 0$ is  unstable, whereas $p_H^\frac 1 \gamma$ is the only asymptotically stable equilibrium and \textit{any} weak solution $u$ (in the sense of Lemma \ref{exist}) departing from \textit{any} nonzero initial datum converges to $p_H^\frac 1 \gamma$. 
	\end{remark}

\subsection{Literature review}

\textbf{Incompressible limit for tumor model.} The incompressible limit connects two models of tumor growth: the compressible one studied in \cite{Perthame-Hele-Shaw} and the free-boundary one analysed in \cite{MR3695889}. Many substantial contributions followed \cite{Perthame-Hele-Shaw}, allowing nutrients \cite{MR4324293}, advection effects \cite{david2021incompressible,MR4331024,MR3942711}, additional structuring variable \cite{david-phenotypic}, congesting flows~\cite{2022arXiv220313709H}, two species \cite{MR4188329,MR4000848} or including additional surface tension effects via the degenerate Cahn-Hilliard equation \cite{elbar2021degenerate, elbar-perthame-skrzeczkowski}. \\ 

One major difficulty for establishing the incompressible limit is proving strong compactness of the gradient of the pressure. The main tool is the celebrated Aronson-Benilan estimate \cite{MR524760,MR670925}. The estimate has been recently readressed in \cite{bevilacqua2022aronson} but the generalization available there are not applicable for the pressure $p = u^{\gamma} + \frac{1}{\varepsilon^2} u$ as in our case cf. \cite[Theorem 4.1]{bevilacqua2022aronson}.
%
Another direct technique was developed in \cite{Liuguo} which is based on deducing strong convergence from a sort of energy equality. This is the strategy we follow in our proof. \\

\textbf{Nonlocal Cahn-Hilliard equation.} The nonlocal Cahn-Hilliard is a variant of the Cahn-Hilliard equation proposed to model dynamics of phase separation \cite{Cahn-Hilliard-1958}. While originally introduced in the context of material science, it is currently widely applied also in biology \cite{MR3104287, MR3902306, MR4199231}. The nonlocal equation
was obtained for the first time by Giacomin and Lebowitz as the limit of interacting particle systems \cite{MR1638739, MR1453735}. Their work can be considered as the first derivation of the Cahn-Hilliard equation up to a delicate limit from the nonlocal equation to the local one. The latter problem was in fact addressed only recently, first for the case of the constant mobility \cite{MR4198717, MR4093616, MR4408204, MR4248454} and finally for the case of degenerate mobility \cite{elbar-skrzeczkowski, carrillo2023degenerate}. Another derivation as a hydrodynamic limit of the Vlasov equation was proposed recently in \cite{elbar-mason-perthame-skrzeczkowski}, following \cite{takata2018simple}. In recent years nonlocal Cahn-Hilliard equation was also studied in couplings with other hydrodynamic models, like Navier-Stokes equations (see, e.g., \cite{FGG, Frig15, Frigeri, AGGP} and the references therein). Moreover, it has been adopted in many optimal control problems, we just mention \cite{PS,RS}.
\\

\textbf{Entropy dissipation methods and asymptotic analysis.} For establishing convergence to stationary states we use methods based on the entropy dissipation. In the simplest scenario, it can be applied to PDEs equipped with the entropy $\Phi$ which decreases with some dissipation 
$$
\partial_t \Phi(t) + \mathcal{D}\Phi(t) \leq 0, \qquad \mathcal{D}\Phi(t) \geq 0.
$$
Then, one tries to prove that the dissipation is bounded from below by the entropy $|\Phi(t)|^{\alpha} \leq \mathcal{D}\Phi(t)$ so that $\Phi(t) \to 0$ with an explicit convergence rate (exponential if $\alpha=1$ and polynomial if $\alpha>1$). Finally, by virtue of Csiszár-Kullback inequality, one deduces convergence in $L^1$. The last step requires conservation of mass which is not available when $G(p) \neq 0$ in \eqref{nonlocal}. We present the method in detail for the Cahn-Hilliard equation without the source term. Another method to obtain convergence to equilibrium, applied in the context of the Cahn-Hilliard equation, is via the \L ojasiewicz-Simon inequality~\cite{loja,abelswilke,LP2011}. This method cannot be applied here due to the degenerate mobility and the lack of separation property from the degenerate case $u=0$. 
\\



\section{Basic a priori estimates} \label{sec:energy_estim}
The energy/entropy structure usually provides a priori estimates on the solutions. However, in the case of a source term which may change sign, we first need to control their dissipation. 
Before tackling this problem, we first show a basic estimate which ensures the control of the mass of the system, uniformly in time. This estimate is useful to obtain a first $L^{\infty}_tL^{1}_x$ bound on the solution. Our proof of these estimates is somehow formal but can be carried out rigorously with an approximation scheme as, e.g., in \cite{elbar-skrzeczkowski}. These estimates are also fundamental to prove the existence Lemma~\ref{exist}.

\subsection{Control of the mass}
We recall that the total mass of the system is defined in~\eqref{mass} and we prove the corresponding bound.
\begin{proposition}[Mass control]\label{prop:mass_conserv} For all $t\ge 0$ we have $\overline{u}(t)\leq p_H^{\frac 1 \gamma}$. \end{proposition}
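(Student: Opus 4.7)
The plan is to derive an ODE inequality for $\overline{u}(t)$ and compare it to a scalar ODE whose constant supersolution is exactly $p_H^{1/\gamma}$.

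First, I would take $v \equiv 1$ as a test function in the weak formulation \eqref{weakf} (on the torus this is admissible and kills both the pressure-gradient and nonlocal-gradient terms since they are pure divergences). This yields, for a.e.\ $t > 0$,
\begin{equation*}
\frac{\diff}{\diff t} \int_\Omega u \diff x \;=\; \int_\Omega u\, G(p) \diff x \;=\; p_H \int_\Omega u \diff x \;-\; \int_\Omega u^{\gamma+1} \diff x.
\end{equation*}
Dividing by $|\Omega|$ and applying Jensen's inequality to the convex function $s \mapsto s^{\gamma+1}$ on $[0,\infty)$, I obtain
\begin{equation*}
\frac{\diff}{\diff t} \overline{u}(t) \;\leq\; p_H\, \overline{u}(t) \;-\; \overline{u}(t)^{\gamma+1} \;=\; \overline{u}(t)\bigl(p_H - \overline{u}(t)^\gamma\bigr).
\end{equation*}

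Next, observe that by Assumption \ref{initial1}(A), $0 \leq u_0 \leq p_H^{1/\gamma}$ a.e.\ in $\Omega$, and therefore $\overline{u}(0) \leq p_H^{1/\gamma}$. Since Remark \ref{rem:weak_continuity} guarantees $\overline{u} \in C([0,T])$, I would run a standard ODE comparison: the constant function $M \equiv p_H^{1/\gamma}$ satisfies $M' = 0 = M(p_H - M^\gamma)$, and the right-hand side $y \mapsto y(p_H - y^\gamma)$ is locally Lipschitz on $[0,\infty)$. Hence if $\overline{u}(t_0) = p_H^{1/\gamma}$ for some first time $t_0 > 0$, the inequality above forces $\tfrac{\diff}{\diff t}\overline{u}(t_0) \leq 0$, so $\overline{u}$ cannot cross the level $p_H^{1/\gamma}$ from below. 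A contradiction argument (or equivalently Gronwall applied to the difference $\overline{u}(t) - p_H^{1/\gamma}$ after factoring) then gives $\overline{u}(t) \leq p_H^{1/\gamma}$ for all $t \geq 0$.

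The only delicate point is justifying the use of $v \equiv 1$ as a test function, since the duality product in \eqref{weakf} a priori involves $v \in W^{1,r}(\Omega)$; on the torus constants do lie in this space, but one should note in passing that the identity $\frac{\diff}{\diff t}\int_\Omega u \diff x = \int_\Omega u G(p) \diff x$ holds in the distributional sense in time as an immediate consequence of \eqref{weakf}. Apart from this routine remark, no real obstacle appears: the mass estimate is dictated entirely by the logistic structure of $G(p) = p_H - u^\gamma$ combined with Jensen's inequality.
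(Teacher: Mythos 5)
Your proof is correct and follows essentially the same route as the paper: integrate the equation in space, bound $\frac{1}{|\Omega|}\int_\Omega u^{\gamma+1}\diff x$ from below by $\overline{u}^{\gamma+1}$ (the paper does this via H\"older, you via Jensen — the same convexity inequality), and conclude by ODE comparison from $\overline{u}(0)\le p_H^{1/\gamma}$. Your added care about the admissibility of $v\equiv 1$ and the continuity of $\overline{u}$ is a reasonable tightening of the paper's more formal computation, but not a different argument.
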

\begin{proof}
Integrating equation \eqref{nonlocal}, we obtain
\begin{align}
\frac{d}{dt} \overline{u}=\frac{1}{\vert \Omega\vert}\int_\Omega(u \, p_H-u^{\gamma+1}) \diff x.
\label{mass_control}
\end{align}
By  the H\"{o}lder inequality, 
$$
\int_{\Omega} u \diff x \leq \left(\int_{\Omega} u^{\gamma+1} \diff x \right)^{\frac{1}{\gamma+1}}
\vert \Omega \vert^\frac{\gamma}{\gamma+1}
$$
we deduce
\begin{align*}
\frac{d}{dt} \overline{u} &\leq \frac{p_H}{\vert\Omega\vert} \int_{\Omega}{u} \diff x-\left(\int_{\Omega}{u} \diff x\right)^{\gamma+1}\frac{1}{\vert \Omega\vert^{{\gamma+1}}}=p_H \overline{u}-\overline{u}^{1+\gamma},
\end{align*}
so that, with the Assumption~\ref{initial1} on the initial condition,  we can conclude~\eqref{mass}.
\end{proof}

\subsection{Energy and entropy estimates}\label{subsec:apriori_uniform_time}
We recall that the energy, the entropy as well as their dissipation have been defined in~\eqref{eq:energy}--\eqref{entropy}. We prove that, for a fixed time horizon $T$, we have the following inequalities.

\begin{proposition}[Control of the energy and entropy dissipation]\label{prop:energy/entropy}
The inequalities hold 
\begin{equation}\label{ent}
\begin{split}
 &\sup_{t\geq0}\Phi(u(t)) +
    \frac{1}{2\varepsilon^2p_H^{\frac 1 \gamma}}\int_0^\infty \int_\Omega\int_\Omega \omega_\varepsilon(y)\vert \nabla u(x)-\nabla u(x-y) \vert^2 \diff x \diff y \diff t\\&+\frac{1}{p_H^{\frac 1 \gamma}}\dfrac{4\gamma}{(\gamma+1)^2}\int_0^\infty\int_\Omega \left\vert \nabla \vert u\vert^{\frac{\gamma+1}{2}}\right\vert^2 \diff x \diff t + \int_0^{\infty} \int_\Omega u\log\left(\frac{u}{p_H^{\frac 1 \gamma}}\right)(p - p_H) \diff x \diff t \leq \Phi(u_0), 
\end{split}
\end{equation}
\begin{equation}\label{energy}
\begin{split}
\frac{d}{dt}\mathcal{E}(u)&+\frac{1}{2\varepsilon^2}\int_\Omega \omega_\varepsilon(y)(u(x)-u(x-y))(u^{\gamma+1}(x)-u^{\gamma+1}(x-y)) \diff x+\int_\Omega u\vert \nabla \mu\vert^2 \diff x\\&+\frac 1 2\int_\Omega u^{2\gamma+1} \diff x \leq C(\mathcal{E}(u)+1),  
\end{split}
\end{equation}
and thus  there exists $C(\mathcal{E}(u_0),T,\varepsilon)>0$ such that
$$
\sup_{t\in[0,T]}\mathcal{E}(u(t))+\frac 1 2\int_0^T\int_\Omega u^{2\gamma+1} \diff x \diff t \leq C(\mathcal{E}(u_0),T,\varepsilon).
$$
\end{proposition}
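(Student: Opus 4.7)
Both estimates follow from integrating (or manipulating) the formal identities \eqref{eq:energy_diss} and \eqref{entropy}; the task is to show that the source terms on their right-hand sides do not destroy the dissipative structure.

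For \eqref{ent}, I start from \eqref{entropy} and observe that with $G(p)=p_H-p$ the source contribution rewrites as
$$
-\frac{1}{p_H^{1/\gamma}}\int_\Omega u\log\!\left(\frac{u}{p_H^{1/\gamma}}\right)G(p)\,\diff x \;=\; \frac{1}{p_H^{1/\gamma}}\int_\Omega u\log\!\left(\frac{u}{p_H^{1/\gamma}}\right)(p-p_H)\,\diff x.
$$
Writing $v:=u/p_H^{1/\gamma}$, the integrand equals $p_H^{1+1/\gamma}\,v\log(v)(v^\gamma-1)$, which is pointwise nonnegative because $\log v$ and $v^\gamma-1$ have the same sign. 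Consequently every term on the left-hand side of \eqref{entropy} is nonnegative; integrating in $t$ over $(0,T)$ for arbitrary $T>0$ and using $\Phi(u(T))\geq 0$ produces the desired bound, which in the limit $T\to\infty$ yields \eqref{ent}.

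For \eqref{energy}, I expand the source term in \eqref{eq:energy_diss} using $\mu=p+B_\varepsilon u$, $p=u^\gamma$ and $G(p)=p_H-p$:
$$
\int_\Omega u\mu G(p)\,\diff x \;=\; p_H\!\int_\Omega u^{\gamma+1}\,\diff x + p_H\!\int_\Omega u B_\varepsilon u\,\diff x - \int_\Omega u^{2\gamma+1}\,\diff x - \int_\Omega u^{\gamma+1}B_\varepsilon u\,\diff x.
$$
The standard symmetrization, using the radiality of $\omega_\varepsilon$ together with the changes of variable $y\mapsto -y$ and $x\mapsto x-y$, gives for every $k\ge 0$
$$
\int_\Omega u^k B_\varepsilon u\,\diff x \;=\; \frac{1}{2\varepsilon^2}\int_\Omega\!\int_\Omega \omega_\varepsilon(y)\bigl(u(x)-u(x-y)\bigr)\bigl(u^k(x)-u^k(x-y)\bigr)\,\diff y\,\diff x \;\ge\; 0,
$$
because $t\mapsto t^k$ is nondecreasing on $[0,\infty)$. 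Hence $\int_\Omega u^{\gamma+1}B_\varepsilon u$ sits on the left of \eqref{energy} with the correct sign, and $\int_\Omega u B_\varepsilon u\le 2\mathcal{E}(u)$. Combining these facts and applying Young's inequality in the form $p_H u^{\gamma+1}\le \tfrac12 u^{2\gamma+1}+C$ (whose conjugate exponents $(2\gamma+1)/(\gamma+1)$ and $(2\gamma+1)/\gamma$ both tend to $2$, keeping the constant bounded in $\gamma$) together with $p_H\int_\Omega u^{\gamma+1}\le p_H(\gamma+1)\mathcal{E}(u)$ if desired, yields \eqref{energy}.

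Finally, dropping the nonnegative dissipation terms from \eqref{energy} reduces the inequality to $\frac{d}{dt}\mathcal{E}(u)\le C(\mathcal{E}(u)+1)$, and Gronwall's lemma gives $\sup_{t\in[0,T]}\mathcal{E}(u(t))\le (\mathcal{E}(u_0)+1)e^{CT}-1$; reinserting this bound and integrating \eqref{energy} in time controls $\int_0^T\!\int_\Omega u^{2\gamma+1}$. The main delicate point in the argument is the sign of the nonlocal cross-term $\int_\Omega u^{\gamma+1}B_\varepsilon u$: were it not sign-definite, the source term in \eqref{eq:energy_diss} could defeat the dissipation. Its nonnegativity, which is what makes the whole scheme work, reduces to the monotonicity of the power function and the radiality of $\omega_\varepsilon$ via the symmetrization identity above.
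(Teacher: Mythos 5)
Your proposal is correct and follows essentially the same route as the paper: the entropy estimate rests on the pointwise sign of $u\log(u/p_H^{1/\gamma})(p-p_H)$, and the energy estimate on the same decomposition of $\int_\Omega u\mu G(p)\,\diff x$, the symmetrization of the nonlocal terms (with the $u^{\gamma+1}B_\varepsilon u$ cross-term kept on the left with its favorable sign), absorption of $p_H\int u^{\gamma+1}$ into $\tfrac12\int u^{2\gamma+1}$ via Young together with the mass bound, and Gronwall. The only cosmetic difference is that the paper applies Young as $p_Hu^{\gamma+1}=(p_Hu^{1/2})\,u^{\gamma+1/2}\le\tfrac12 u^{2\gamma+1}+\tfrac12 p_H^2u$ and then uses the mass control, rather than your conjugate-exponent version, but both yield a constant uniform in $\gamma$.
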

\begin{remark}
The above estimate in the energy $\mathcal{E}$ depends exponentially on the final time $T$. We improve this result to a global one 
in Proposition~\ref{prop:unif_energy_time}.
\end{remark} 
\begin{proof}
\textit{Control of the entropy}. Note that $\Phi(u)\geq 0$ by the inequality $x\log\left(\frac{x}{y}\right)-x+y\geq (\sqrt{x}-\sqrt{y})^2$ for $x\ge 0$ and $y>0$ ($x=\frac{u}{p_H^{\frac 1 \gamma}}$, $y=1$ here).
Then we have
\begin{align*}
-\int_\Omega u\log\left(\frac{u}{p_H^{\frac 1 \gamma}}\right)G(p) \diff x=-\int_\Omega u\log\left(\frac{u}{p_H^{\frac 1 \gamma}}\right)(p_H-p) \diff x \geq 0,
\end{align*}
since $u \geq 0$ and $x \mapsto \log x$ is increasing for $x \geq 0$.
Therefore, all the terms in the dissipation of entropy in \eqref{entropy} are nonnegative so that we can integrate in time and obtain \eqref{ent}, which clearly implies the control of the entropy \textit{independently} of $T$. 
\\ 

\textit{Energy control.} Turning to the energy $\mathcal{E}$, departing from \eqref{eq:energy_diss}, we observe that 
\begin{align*}
\int_\Omega uG(p)\mu \diff x =\int_\Omega u(p_H-p)p \diff x +\int_\Omega  u\,B_\varepsilon(u)(p_H-p) \diff x .
\end{align*}
The first term can be written as
\begin{align*}
\int_\Omega u(p_H-p)p \diff x &=-\int_{\Omega}u  p^{2} \diff x +\int_{\Omega} u  p_{H} p \diff x\\
&\le-\int_{\Omega}up^{2} \diff x +\frac{1}{2}\int_\Omega up^2 \diff x +\frac 1 2 p_H^2\int_\Omega u \diff x \le-\frac{1}{2}\int_{\Omega}up^{2} \diff x +C, 
\end{align*}
where we used the mass control \eqref{mass}. For the second term we have, by symmetry of $\omega$,
\begin{align*}
\int_\Omega u\,B_\varepsilon(u)(p_H-p) \diff x=& \frac{p_H}{2\varepsilon^2} \int_\Omega\int_\Omega \omega_\varepsilon(y)\vert u(x)-u(x-y) \vert^2 \diff x \diff y 
\\
&-\frac{1}{2\varepsilon^2}\int_\Omega\int_\Omega \omega_\varepsilon(y)(u(x)-u(x-y))(u^{\gamma+1}(x)-u^{\gamma+1}(x-y)) \diff x\diff y.
\end{align*}

All together, these inequalities give immediately \eqref{energy} and by the Gronwall lemma the energy control.
\end{proof}

\begin{remark}
In the limit $\gamma\to\infty$ it holds that $u_\infty\le 1$. This follows from the bound obtained with the energy in Proposition \ref{prop:energy/entropy} since 
$$
\Vert u_\infty \Vert_{L^\infty}  =   \lim_{\gamma \to \infty} \Vert u_\infty \Vert_{L^\gamma} \quad \text{and} \quad \Vert u \Vert_{L^{\gamma+1}}\leq C^{\frac 1 {\gamma+1}}(\gamma+1)^{\frac 1 {\gamma+1}} \to 1
$$
because of the weak convergence of $u^\gamma$. We refer for instance to~\cite{elbar2021degenerate,MR3695967}. However in the next section we obtain a better control on $u$ by the De Giorgi iteration method. 
\end{remark}
 
Now, we improve the local in time estimate on $\mathcal{E}$ to a global one, which is nontrivial due to the presence of the source term. Since Proposition \ref{prop:energy/entropy} gives the uniform control $\Phi (u(t))\leq \Phi(u_0)$ for any $t\geq0$, our aim is to control in a uniform way the energy $\mathcal{E}$ as well. 

\begin{proposition}[Uniform in time estimates for the energy]
\label{prop:unif_energy_time}
There exists a constant independent of time and $\gamma$ such that 
\begin{align}
\mathcal{E}(t)\leq C,\quad \forall t\geq 0.
\label{uniform}
\end{align}
\end{proposition}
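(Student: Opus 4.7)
The local estimate \eqref{energy} provides
\begin{equation*}
\frac{d}{dt}\mathcal{E}(u) + \int_\Omega u|\nabla\mu|^2\diff x + D_2(u) + \tfrac12 \int_\Omega u^{2\gamma+1}\diff x \le C(\mathcal{E}(u) + 1),
\end{equation*}
where $D_2(u)\ge 0$ is the nonlocal dissipation term appearing on the left-hand side of \eqref{energy}. Applying Gr\"onwall yields only exponential growth of $\mathcal{E}$ in $t$. To promote this to a $t$- and $\gamma$-uniform bound, I plan to refine the right-hand side and replace the $C\mathcal{E}(u)$ contribution by a constant. The culprit responsible for the $C\mathcal{E}(u)$ is the ``bad'' term $2p_H\mathcal{E}_1(u)$ where $\mathcal{E}_1(u):=\tfrac{1}{4\varepsilon^2}\int_\Omega\int_\Omega\omega_\varepsilon(y)|u(x)-u(x-y)|^2\diff x\diff y$ is the nonlocal part of $\mathcal{E}$; my strategy is to absorb it \emph{into} the dissipation $\int_\Omega u^{2\gamma+1}$ rather than bound it by $\mathcal{E}(u)$ itself.

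\textbf{Absorption of $\mathcal{E}_1$ by the dissipation.} Using $|u(x)-u(x-y)|^2\le 2u(x)^2+2u(x-y)^2$ together with $\int\omega_\varepsilon=1$, one has $\mathcal{E}_1(u) \le \varepsilon^{-2}\|u\|_{L^2(\Omega)}^2$. The standard Lebesgue interpolation between $L^1$ and $L^{2\gamma+1}$ combined with the mass bound \eqref{mass} gives
\begin{equation*}
\|u\|_{L^2(\Omega)}^2 \le (p_H^{1/\gamma}|\Omega|)^{(2\gamma-1)/(2\gamma)}\Big(\int_\Omega u^{2\gamma+1}\Big)^{1/(2\gamma)}.
\end{equation*}
A Young inequality $A^{1/(2\gamma)}\le \delta A + C_\delta$ with a well-chosen $\delta$ then yields $2p_H\mathcal{E}_1(u)\le \tfrac{1}{4}\int_\Omega u^{2\gamma+1}+C'$, with $C'$ uniform in $\gamma\ge\gamma_0$: indeed, $\delta^{-1/(2\gamma-1)}\to 1$ as $\gamma\to\infty$ and the interpolation prefactor is controlled by $\max(|\Omega|,p_H|\Omega|)$. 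Analogously, Cauchy--Schwarz gives $\int_\Omega u^{\gamma+1}\le \|u\|_{L^1}^{1/2}\big(\int_\Omega u^{2\gamma+1}\big)^{1/2}$, whence $p_H\int_\Omega u^{\gamma+1}\le \tfrac14\int_\Omega u^{2\gamma+1}+C''$ with $C''$ uniform in $\gamma$.

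\textbf{Closing by Gr\"onwall.} Inserting these two absorptions into the balance yields
\begin{equation*}
\frac{d}{dt}\mathcal{E}(u) + \int_\Omega u|\nabla\mu|^2\diff x + D_2(u) + \tfrac14\int_\Omega u^{2\gamma+1}\diff x \le C.
\end{equation*}
To transform this into a dissipative inequality for $\mathcal{E}(u)$, I would apply the same two inequalities in the opposite direction to get the converse bound $\mathcal{E}(u) = \mathcal{E}_1(u)+\tfrac{1}{\gamma+1}\int_\Omega u^{\gamma+1} \le C_\star(1+\int_\Omega u^{2\gamma+1})$ with $C_\star$ uniform in $\gamma$. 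Hence $\int_\Omega u^{2\gamma+1}\ge c_\star\mathcal{E}(u)-C_\star$, and substitution produces
\begin{equation*}
\frac{d}{dt}\mathcal{E}(u) + c\,\mathcal{E}(u)\le C,
\end{equation*}
with $c, C > 0$ uniform in $t$ and $\gamma\ge\gamma_0$. Since $\mathcal{E}(u_0)$ is itself uniformly bounded in $\gamma$ under Assumption~\ref{initial1} (as $u_0^{\gamma+1}\le p_H^{1+1/\gamma}$ and $\mathcal{E}_1(u_0)\le \varepsilon^{-2}p_H^{2/\gamma}|\Omega|$), Gr\"onwall's lemma yields $\mathcal{E}(u(t))\le \mathcal{E}(u_0)\,e^{-ct}+C/c$, proving \eqref{uniform}.

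\textbf{Main obstacle.} The delicate point is keeping \emph{every} constant uniform in $\gamma$, including in the interpolation-Young step. The sub-linear power $\big(\int_\Omega u^{2\gamma+1}\big)^{1/(2\gamma)}$ forces the linearizing Young inequality to produce a constant $C_\delta\sim \delta^{-1/(2\gamma-1)}$; fortunately this exponent tends to $0$, so $C_\delta$ remains bounded as $\gamma\to\infty$, and for $\gamma\ge\gamma_0$ fixed it is finite. Combined with the uniform control of $(p_H^{1/\gamma}|\Omega|)^{(2\gamma-1)/(2\gamma)}$, this is exactly what renders the final Gr\"onwall constants $c, C$ genuinely $\gamma$-independent.
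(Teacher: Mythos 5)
Your argument is correct, and it takes a genuinely different route from the paper. The paper's proof keeps the inequality $\frac{d}{dt}\mathcal{E}\leq C(\mathcal{E}+1)$ as is and instead establishes the time-averaged bound $\int_t^{t+1}\mathcal{E}(s)\diff s\leq C$ uniformly in $t$ and $\gamma$, by two separate mechanisms: the nonlocal part $\mathcal{E}_1$ is controlled through the globally integrable entropy dissipation \eqref{ent} combined with the nonlocal Poincar\'e inequality \eqref{p_p}, and the potential part $\frac{1}{\gamma+1}\int_\Omega u^{\gamma+1}$ through integrating the mass balance \eqref{mass_control} over $[t,t+1]$; a uniform-Gronwall step then upgrades the averaged bound to a pointwise one. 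You instead re-enter the derivation of \eqref{energy}, identify the single non-constant term $2p_H\mathcal{E}_1(u)$ on the right-hand side, and absorb it into the dissipation $\frac12\int_\Omega u^{2\gamma+1}$ via the $L^1$--$L^{2\gamma+1}$ interpolation $\Vert u\Vert_{L^2}^2\leq\Vert u\Vert_{L^1}^{(2\gamma-1)/(2\gamma)}\left(\int_\Omega u^{2\gamma+1}\right)^{1/(2\gamma)}$, the mass bound \eqref{mass}, and Young's inequality; the two-sided comparison $\mathcal{E}\leq C_\star(1+\int_\Omega u^{2\gamma+1})$ then converts the balance into a strictly dissipative inequality $\frac{d}{dt}\mathcal{E}+c\mathcal{E}\leq C$. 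Your uniformity checks are the right ones and do hold (the Young constant $\sim\delta^{-1/(2\gamma-1)}$ and the prefactor $(p_H^{1/\gamma}|\Omega|)^{(2\gamma-1)/(2\gamma)}$ stay bounded, in fact for all $\gamma\geq1$, not just $\gamma\geq\gamma_0$), and $\mathcal{E}(u_0)$ is indeed uniformly bounded under Assumption \ref{initial1}. What each approach buys: yours is more self-contained (it never invokes the entropy estimate or the Poincar\'e-type Lemma \ref{Poincar}) and yields the stronger conclusion of exponential relaxation $\mathcal{E}(u(t))\leq\mathcal{E}(u_0)e^{-ct}+C/c$; on the other hand it leans entirely on the degradation part $-\int_\Omega u p^2$ of the source term to generate the absorbing dissipation, whereas the paper's time-averaging mechanism is structurally independent of that term and recycles estimates already proved. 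Both arguments are at the same (formal) level of rigor as the rest of Section \ref{sec:energy_estim}.
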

\begin{proof}

Firstly, we estimate separately the two terms defining the energy in \eqref{eq:energy} using the entropy estimate \eqref{ent}. It immediately gives that, for a constant $C>0$ independent of time and $\gamma$,  for any sufficiently small $\varepsilon$, 
\begin{align}
\nonumber\int_t^{t+1}&\frac{1}{4\varepsilon^2}\int_\Omega\int_\Omega \omega_\varepsilon(y)\vert u(x)-u(x-y) \vert^2 \diff x \diff y \diff s \leq C(\Omega)\int_t^{t+1}\Vert u\Vert_{H^1(\Omega)}^2 \diff s
\\
&\leq C \int_t^{t+1}\frac{1}{4\varepsilon^2} \int_\Omega\int_\Omega \omega_\varepsilon(y)\vert \nabla u(x)-\nabla u(x-y) \vert^2 \diff x \diff y \diff s +C \int_t^{t+1}\Vert u \Vert_{L^1(\Omega)}^2 \diff s \label{parenergy}\\ 
\nonumber &\leq C+Cp_H^\frac 2 \gamma, \quad \forall t\geq 0.
\end{align}
where the second inequality follows from Equations \eqref{p_p} and \eqref{mass}.
\\

 Secondly, we control the remaining part of the energy $\mathcal{E}$, the one related to $\frac 1 {\gamma+1}\int_\Omega u^{\gamma+1} \diff x$. To this aim, we integrate Equation~\eqref{mass_control} in time  over $[t,t+1]$ and get
$$
\overline{u}(t+1)-\overline{u}(t)=\frac{1}{\vert \Omega\vert}\int_t^{t+1}\int_\Omega(up_H-u^{\gamma+1}) \diff x \diff s,
$$
so that, rearranging the terms, we control the second term of the energy as
$$
\frac{1}{\vert \Omega\vert}\int_t^{t+1}\int_\Omega u^{\gamma+1} \diff x \diff s \leq \frac{1}{\vert \Omega\vert}\int_t^{t+1}\int_\Omega up_H \diff x \diff s +\overline{u}(t)=
(p_H+1) \int_t^{t+1}\overline{u}(s) \diff s \leq C
$$
because  $\overline{u}(\cdot)\leq p_H^{\frac 1 \gamma}$ for any $t\geq0$ thanks to \eqref{mass}. This, together with \eqref{parenergy}, implies that 
\begin{align}
\int_t^{t+1}\mathcal{E}(s) \diff s \leq C,
\label{un}
\end{align}
with $C>0$ independent of $t\geq0$ and $\gamma$. 
\\

We may now  conclude the energy estimate. By Proposition \ref{prop:energy/entropy}, we have
$$
\frac{d}{dt}\mathcal{E}(u)\leq C (\mathcal{E}(u)+1).
$$
Using the Gronwall lemma, we obtain for all $t\geq 0$ and all $0\leq s \leq 1$,
$$
\mathcal{E}(u(t+1)) \leq C \, \mathcal{E}(u(t+s))+ C.
$$
Integrating in $s$ and using the bound \eqref{un}, we conclude the proof of Proposition~\ref{prop:unif_energy_time}. 

\end{proof}

\subsection{A control on $\partial_t u$}
\label{subsec::dtcontrol}
Here we prove the estimate on time derivative, which appears also in Lemma \ref{exist} and is used in the proof of Theorem \ref{thm:conv_stationary}.
\begin{proposition}
There exists $C=C(\gamma,T)>0$ such that the bounds hold
\begin{align}
&\Vert\partial_t u\Vert_{L^{q^\prime}(0,T;(W^{1,q}(\Omega))^\prime)}\leq C,\quad q={\dfrac{2(2\gamma+1)}{\gamma}}, \quad \frac 1 q +\frac 1 {q^\prime}=1,
    \label{d_tu}\\
&\Vert\partial_t u\Vert_{L^{2}(0,T;(W^{1,r}(\Omega))^\prime)}\leq C,\quad r=\frac{(\gamma+1)(2\gamma+1)}{\gamma^2}.
    \label{d_tu2}
\end{align}
 \label{partialtu}
\end{proposition}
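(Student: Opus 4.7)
The plan is to test the weak formulation \eqref{weakf} against an arbitrary $v\in W^{1,q}(\Omega)$ (resp.\ $W^{1,r}(\Omega)$), bound the four resulting integrals by H\"older's inequality combined with the a priori estimates from Propositions \ref{prop:mass_conserv}, \ref{prop:energy/entropy}, and \ref{prop:unif_energy_time}, and then dualize in time. The only delicate integral is the pressure contribution; the other three are straightforward.

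The pressure term I would rewrite as
\[
\int_\Omega u\nabla p\cdot\nabla v\,dx \;=\; \frac{2\gamma}{\gamma+1}\int_\Omega u^{(\gamma+1)/2}\,\nabla u^{(\gamma+1)/2}\cdot\nabla v\,dx,
\]
and estimate it by a three-function spatial H\"older inequality with factors $u^{(\gamma+1)/2}\in L^a_x$, $\nabla u^{(\gamma+1)/2}\in L^2_x$, and $\nabla v\in L^q_x$, where $\tfrac1a+\tfrac12+\tfrac1q=1$. For \eqref{d_tu}, the choice $q=\tfrac{2(2\gamma+1)}{\gamma}$ corresponds to $a=\tfrac{2(2\gamma+1)}{\gamma+1}$, giving $\|u^{(\gamma+1)/2}\|_{L^a_x}=\|u\|_{L^{2\gamma+1}_x}^{(\gamma+1)/2}$, which is exactly the regularity $u\in L^{2\gamma+1}(Q_T)$ provided by Proposition \ref{prop:energy/entropy}. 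A final H\"older inequality in time then shows that the product $\|u\|_{L^{2\gamma+1}_x}^{(\gamma+1)/2}\,\|\nabla u^{(\gamma+1)/2}\|_{L^2_x}$ lies in $L^{q'}_t$, precisely because $\tfrac{\gamma+1}{2(2\gamma+1)}+\tfrac12=\tfrac{3\gamma+2}{2(2\gamma+1)}=\tfrac{1}{q'}$. Pairing this with $\|\nabla v\|_{L^q_t L^q_x}$ yields \eqref{d_tu}.

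For \eqref{d_tu2} I would repeat the three-function H\"older argument but redistribute the time exponents so as to target an $L^2_t$ bound instead of $L^{q'}_t$. Interpolating between $u\in L^\infty_tL^{\gamma+1}_x$, $u^{(\gamma+1)/2}\in L^2_tH^1_x$, and $u\in L^{2\gamma+1}(Q_T)$ to produce the intermediate Lebesgue norm on $u^{(\gamma+1)/2}$ that compensates for $\|\nabla u^{(\gamma+1)/2}\|_{L^2_x}\in L^2_t$ forces the spatial exponent on $\nabla v$ to be exactly $r=\tfrac{(\gamma+1)(2\gamma+1)}{\gamma^2}$.

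The three remaining terms are much easier. The local part $\tfrac{1}{\varepsilon^2}\int u\nabla u\cdot\nabla v = \tfrac{1}{2\varepsilon^2}\int\nabla u^2\cdot\nabla v$ is the $\gamma=1$ case of the pressure estimate and is dominated by it for $\gamma\geq1$. The convolution term is handled by Young's inequality $\|\nabla\omega_\varepsilon\ast u\|_{L^\infty_x}\le\|\nabla\omega_\varepsilon\|_{L^\infty}\,\|u\|_{L^1_x}$ together with the mass control~\eqref{mass}. The source term $\int uG(p)v\,dx=\int u(p_H-u^\gamma)v\,dx$ is controlled using $u\in L^{2\gamma+1}(Q_T)$ and the Sobolev embedding $W^{1,q}(\Omega)\hookrightarrow L^{q^*}(\Omega)$ (which in dimension $d\le3$ gives even $L^\infty_x$ when $q=\tfrac{2(2\gamma+1)}{\gamma}>3$), and analogously for $W^{1,r}$. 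The main obstacle is the careful bookkeeping of H\"older and interpolation exponents so that they match the stated values of $q$ and $r$; once this accounting is verified, the proposition follows directly from the previously established a priori bounds.
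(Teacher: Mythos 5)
Your argument for \eqref{d_tu} is essentially the paper's: the same factorization $u\nabla p=\tfrac{2\gamma}{\gamma+1}u^{(\gamma+1)/2}\nabla u^{(\gamma+1)/2}$, the same H\"older exponents $\bigl(\tfrac{2(2\gamma+1)}{\gamma+1},2,q\bigr)$ fed by $u\in L^{2\gamma+1}(\Omega\times(0,T))$ and the entropy dissipation bound on $\nabla u^{(\gamma+1)/2}$, and the same elementary treatment of the convolution and source terms. Two small imprecisions there: the local term $\varepsilon^{-2}\int_\Omega u\nabla u\cdot\nabla v$ is not pointwise dominated by the pressure term (for $u\le 1$ the inequality goes the other way), but it is closed by $\Vert u\Vert_{L^{2\gamma+1}}\Vert\nabla u\Vert_{L^2}\Vert\nabla v\Vert_{L^{2(2\gamma+1)/(2\gamma-1)}}$ with $\tfrac{2(2\gamma+1)}{2\gamma-1}\le q$, which is what the paper does; and $W^{1,r}(\Omega)\hookrightarrow L^\infty(\Omega)$ fails in $d=3$ once $\gamma\ge 4$, since $r=2+3/\gamma+1/\gamma^2<3$, so the source term in the second estimate must be paired with $\varphi\in L^r_x$ directly rather than via an $L^\infty$ embedding.

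The genuine gap is in \eqref{d_tu2}. Testing against $\varphi\in L^2(0,T;W^{1,r}(\Omega))$ leaves the time budget $\Vert F\Vert_{L^2_t}\Vert\nabla\varphi\Vert_{L^2_tL^r_x}$ with $F(t)=\Vert u^{(\gamma+1)/2}\Vert_{L^a_x}\Vert\nabla u^{(\gamma+1)/2}\Vert_{L^2_x}$. Since the gradient factor is only in $L^2_t$, the factor $\Vert u^{(\gamma+1)/2}\Vert_{L^a_x}$ must be in $L^\infty_t$; the only uniform-in-time spatial integrability available is $u\in L^\infty_tL^{\gamma+1}_x$, i.e.\ $u^{(\gamma+1)/2}\in L^\infty_tL^2_x$, so $a=2$, and then the spatial H\"older constraint $\tfrac12+\tfrac12+\tfrac1r\le 1$ forces $r=\infty$. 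Interpolating with $u\in L^{2\gamma+1}(\Omega\times(0,T))$ or with $H^1_x\hookrightarrow L^6_x$ raises $a$ only at the price of a finite time exponent, which the $L^2_t\cdot L^2_t$ pairing cannot absorb; so no redistribution of exponents closes the advection term for any finite $r$, and in particular your claim that this computation "forces" the stated $r$ cannot be correct. The paper's proof avoids the problem by \emph{not} splitting $\mu$: it writes $\int_\Omega u\nabla\mu\cdot\nabla\varphi=\int_\Omega u^{1/2}\,(u^{1/2}\nabla\mu)\cdot\nabla\varphi$ and invokes the energy dissipation $\int_0^T\int_\Omega u|\nabla\mu|^2\,\diff x\,\diff t\le C$ from \eqref{energy}, so that $u^{1/2}\nabla\mu\in L^2_tL^2_x$, $u^{1/2}\in L^\infty_tL^{2(\gamma+1)}_x$, and $\nabla\varphi$ only needs to lie in $L^2_tL^{2(\gamma+1)/\gamma}_x$ with $\tfrac{2(\gamma+1)}{\gamma}\le r$. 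That dissipation bound is the missing ingredient in your sketch; with it, \eqref{d_tu2} follows, and the precise value of $r$ is then dictated by the source term $\int_\Omega u^{\gamma+1}\varphi\,\diff x$ (exponents $\gamma+1$, $\tfrac{2\gamma+1}{\gamma}$, $r$), not by the advection term.
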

\begin{proof}
For $\gamma\geq 1$ fixed, any $T>0$, any $\varphi\in L^q(0,T;W^{1,q}(\Omega))$,  we have
 \begin{align*}
     &\Big\vert \int_0^T  \langle\partial_t u,\varphi  \rangle_{(W^{1,q}(\Omega))',W^{1,q}(\Omega))} \diff t \Big\vert \\
     &\leq \left\vert \int_0^T\int_\Omega \gamma u^\gamma\nabla u\cdot \nabla \varphi \diff x \diff t \right\vert+\left\vert \int_0^T\frac 1 {\varepsilon^2}\int_\Omega u\nabla u\cdot \nabla \varphi \diff x \diff t \right\vert 
     \\
     &\qquad \quad +\left\vert \int_0^T\frac{1}{\varepsilon^2}\int_\Omega u(\nabla \omega_\varepsilon\ast u)\cdot \nabla \varphi \diff x \diff t \right\vert+\left\vert \int_0^T\int_\Omega u(p_H-u^\gamma)\varphi \diff x \diff t \right\vert \\
     &\leq \gamma^\frac 1 2\Vert u\Vert^{\frac{\gamma+1}{2(2\gamma+1)}}_{L^{2\gamma+1}(\Omega\times(0,T))}\Vert \gamma^\frac 1 2 u^{\frac{\gamma-1}{2}}\nabla u\Vert_{L^2(\Omega\times(0,T))} \Vert \nabla \varphi\Vert_{L^q(\Omega\times(0,T))}\\
     &\quad + C(\varepsilon)\Vert u\Vert_{L^{2\gamma+1}(\Omega\times(0,T))}\Vert \nabla u\Vert_{L^2(\Omega\times(0,T))} \Vert \nabla \varphi\Vert_{L^{\frac{2(2\gamma+1)}{2\gamma-1}}(\Omega\times(0,T))}
     \\
     &\quad +
     C(\varepsilon)\Vert u\Vert_{L^2(\Omega\times (0,T))}\Vert\nabla \omega_\varepsilon\ast u\Vert_{L^\infty(\Omega\times(0,T))}\Vert \nabla \varphi\Vert_{L^2(\Omega\times(0,T))}
     \\
     &\quad +C\Vert u\Vert_{L^{2\gamma+1}(\Omega\times(0,T))}\Vert p_H-u^\gamma\Vert_{L^2(\Omega\times(0,T))} \Vert  \varphi\Vert_{L^{\frac{2(2\gamma+1)}{2\gamma-1}}(\Omega\times(0,T))}
     \\
     & \leq 
     C(\varepsilon,T,\gamma) \Vert  \varphi\Vert_{L^q(0,T; W^{1,q}(\Omega))}.
     \end{align*}
     More precisely, to estimate the first and second terms,  we used the H\"{o}lder inequality with exponents $\frac{2(2\gamma+1)}{\gamma+1}$, $2$, $q$ and $2\gamma+1$, $2$ and $\frac{2(2\gamma+1)}{2\gamma-1}$, respectively. Then, $\Vert u\Vert^{\frac{\gamma+1}{2(2\gamma+1)}}_{L^{2\gamma+1}(\Omega\times(0,T))}$ is bounded due to \eqref{energy}, $\Vert \gamma^\frac 1 2 u^{\frac{\gamma-1}{2}}\nabla u\Vert_{L^2(\Omega\times(0,T))}$ is estimated by \eqref{ent} while the bound on $\nabla u$ follows from \eqref{ent} and nonlocal Poincar\'e inequality \eqref{p_p}. The fourth term is bounded in the same spirit. Concerning the third one, we simply estimate
     $$\Vert\nabla \omega_\varepsilon\ast u\Vert_{L^\infty(\Omega\times(0,T))}\leq \Vert \nabla \omega_\varepsilon\Vert_{L^\infty(\Omega)}\Vert u\Vert_{L^\infty(0,T;L^1(\Omega))}\leq C(\varepsilon)
     $$
     and use the estimate on the total mass \eqref{mass}. The final conclusion follows from the inequalities
     $q>2$ and $q\geq \frac{2(2\gamma+1)}{2\gamma-1}$ for any $\gamma\geq 1$.\\
     
     Concerning \eqref{d_tu2}, let $\varphi\in L^2(0,T;W^{1,r}(\Omega))$, with $r=\frac{(\gamma+1)(2\gamma+1)}{\gamma^2}$. Then,
      \begin{align*}
     &\left\vert \int_0^T\langle\partial_t u,\varphi\rangle_{(W^{1,r}(\Omega))',W^{1,r}(\Omega))}\diff t\right\vert\\ &\leq \left\vert \int_0^T\int_\Omega  u\nabla \mu\cdot \nabla \varphi \diff x \diff t\right\vert+\left\vert \int_0^T\int_\Omega u(p_H-u^\gamma)\varphi \diff x \diff t\right\vert\\&\leq \Vert u^\frac 1 2\Vert_{L^\infty(0,T;L^{2(\gamma+1)})}\Vert u^\frac 1 2\nabla \mu\Vert_{L^2(\Omega\times(0,T))}\Vert\nabla \varphi\Vert_{L^2(0,T;L^{\frac{2(\gamma+1)}{\gamma}}(\Omega))}
     \\
     &\qquad +p_H\Vert u\Vert_{L^\infty(0,T;L^{\gamma+1}(\Omega))}\Vert \varphi\Vert_{L^1(0,T;L^\frac{\gamma+1}{\gamma}(\Omega))}
     \\
     &\qquad +\Vert u\Vert_{L^\infty(0,T;L^{\gamma+1}(\Omega))}\Vert u\Vert_{L^{2\gamma+1}(\Omega\times(0,T))}\Vert \varphi\Vert_{L^{\frac{2\gamma+1}{2\gamma}}(0,T;L^r(\Omega))}\\&\leq C\Vert \varphi\Vert_{L^2(0,T;W^{1,r}(\Omega))},
     \end{align*}
     exploiting Proposition \ref{prop:energy/entropy}, recalling that $r\geq \frac{2(\gamma+1)}{\gamma}> \frac{\gamma+1}{\gamma}$ and $\frac{2\gamma+1}{2\gamma}\leq 2$. This concludes the proof.

\end{proof}

\section{Incompressible limit: proof of Theorem \ref{thm:incompressible}}
\label{section::incompressible}

We see the incompressible limit of \eqref{nonlocal} as the limit $\gamma\to \infty$ and the resulting problem turns out to be a free boundary problem of Hele-Shaw type. Concerning the techniques adopted here,  we  first show that, for any fixed $T>0$, $u_\gamma$ is bounded in $L^\infty(\Omega\times(0,T))$ by a quantity which converges to $1$ as $\gamma\to\infty$ (see \eqref{Linfty}). Due to the presence of the convolution term, which makes the equation nonlocal, we cannot apply any classical maximum principle, so that we need to resort to De Giorgi iterations, exploiting the fact that the equation is a second order differential equation.\\

With uniform estimates in $L^{\infty}$ at hand, we may apply standard energy estimates to gain sufficient regularity on the pressure $p_\gamma$, which we bounded in $L^3(\Omega\times(0,T))\cap L^2(0,T;H^1(\Omega))$ uniformly in $\gamma$. Then, one can obtain uniform controls in $L^\infty(0,T;L^1(\Omega))$ for $\partial_t u_\gamma$ and in $L^1(\Omega\times(0,T))$ for $\partial_t p_\gamma$, so as to deduce the strong convergence of $u_\gamma$ and~$p_\gamma$ in $L^2(\Omega\times (0,T))$ to some $u_\infty$ and $p_\infty$.  
With the help of these bounds, we are able to pass to the limit as $\gamma\to\infty$ in Equation~\eqref{nonlocal} and obtain Equations~\eqref{incom} and~\eqref{eq:incompressible_graph} for the limit concentration $u_\infty$.\\

In order to obtain more information on $u_\infty, p_\infty$, like complementarity conditions \eqref{eq:incompressible_pressure}, we need a stronger convergence for $\nabla p_\gamma$. The standard technique uses some control on $\Delta p$ thanks to the Aronson-B\'{e}nilan inequality (see, e.g., \cite{david2021incompressible}) which does
not apply here due to the higher-order term. In particular, the (formal) CH equation can be written as 
$$
\partial_t u_\gamma-\frac{\gamma}{\gamma+1}\Delta u^{\gamma+1}_\gamma- \frac{1}{2\varepsilon^2}\Delta u^2_\gamma +\frac{1}{\varepsilon^2}\text{div}(u_\gamma(\nabla \omega_\varepsilon\ast u)_\gamma)= u_\gamma G(p_\gamma),   
$$
and the extra term $\Delta u_\gamma^2$, independent of $\gamma$, appearing, and this prevents us from obtaining the Aronson-B\'{e}nilan inequality. Taking inspiration from \cite{david-phenotypic, Liuguo}, in the second part of the present section, we instead show the strong $L^2(\Omega\times(0,T))$ convergence of 
$$
\nabla \left(\frac{\gamma}{\gamma+1}u_\gamma^{\gamma+1}+\frac 1 {2\varepsilon^2} u^2_\gamma\right),
$$
which is shown to be enough to guarantee the  validity of the condition~\eqref{eq:incompressible_pressure}.

\subsection{An $L^\infty$ bound on $u_\gamma$}

\begin{lemma}
\label{DeG}
    Assume $0\leq u_0\leq p_H^{\frac 1 {\gamma}}$. For any $T>0$ there exists $\overline{\gamma}(T,\varepsilon)> 2$, explicitly computed as a function of $T$, such that
    \begin{align}
      0\leq u_\gamma\leq p_H^\frac 1 \gamma+\frac{2}{\sqrt[3]{\gamma}},\quad\text{a.e. on }\Omega\times(0,T),\quad \forall \gamma\geq \overline{\gamma}(T,\varepsilon). \label{Linfty}
    \end{align}
\end{lemma}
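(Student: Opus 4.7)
The approach is a De Giorgi iteration, in the spirit of \cite{GGGsep, Poiatti2022}. Rewriting \eqref{nonlocal} in divergence form
\[
\partial_t u_\gamma - \gamma\,\mathrm{div}(u_\gamma^\gamma\,\nabla u_\gamma) - \frac{1}{\varepsilon^2}\mathrm{div}(u_\gamma\,\nabla u_\gamma) + \frac{1}{\varepsilon^2}\mathrm{div}\bigl(u_\gamma(\nabla\omega_\varepsilon\ast u_\gamma)\bigr) = u_\gamma(p_H - u_\gamma^\gamma),
\]
I set $M := 2\gamma^{-1/3}$ and work with the increasing sequence of truncation levels $k_n := p_H^{1/\gamma} + M(1-2^{-n})$, so that $k_0 = p_H^{1/\gamma}\geq u_0$ (by Assumption~\ref{initial1}) and $k_n \nearrow k_\infty = p_H^{1/\gamma}+M$, which is precisely the target upper bound in \eqref{Linfty}. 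The nonnegativity $u_\gamma \geq 0$ is already provided by Lemma~\ref{exist}, so only the upper bound remains to be established.

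The first step is to test the equation against $(u_\gamma - k_n)_+$, whose initial contribution vanishes because $u_0\leq k_0$. Integration by parts bounds the two nondegenerate diffusions from below by $\gamma k_n^\gamma\|\nabla(u_\gamma-k_n)_+\|_{L^2}^2 \geq \gamma p_H \|\nabla(u_\gamma-k_n)_+\|_{L^2}^2$. The reaction term $u_\gamma(p_H-u_\gamma^\gamma)(u_\gamma-k_n)_+$ is pointwise non-positive on $\{u_\gamma>k_n\}$ (since $k_n^\gamma\geq p_H$) and is discarded. For the nonlocal advection, the mass bound \eqref{mass} gives $\|\nabla\omega_\varepsilon\ast u_\gamma\|_{L^\infty}\leq \|\nabla\omega_\varepsilon\|_{L^\infty}\|u_\gamma\|_{L^1}\leq C(\varepsilon)$; Cauchy-Schwarz and Young's inequality absorb a fixed fraction of the dissipation and leave
\[
\frac{1}{2}\frac{d}{dt}\|(u_\gamma-k_n)_+\|_{L^2(\Omega)}^2 + \frac{\gamma p_H}{2}\|\nabla(u_\gamma-k_n)_+\|_{L^2(\Omega)}^2 \leq C(\varepsilon)\bigl(\|(u_\gamma-k_n)_+\|_{L^2}^2 + k_n^2 |A_n(t)|\bigr),
\]
with $A_n(t):=\{u_\gamma(\cdot,t)>k_n\}$. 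Integrating in time and applying Gronwall yields uniform-in-$n$ control of $(u_\gamma-k_n)_+$ in $L^\infty_t L^2_x\cap L^2_t H^1_x$.

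The De Giorgi iteration then proceeds from the functional $y_n := \int_0^T\|(u_\gamma-k_n)_+\|_{L^2(\Omega)}^2\, dt$ and the super-level set $\mathcal A_n := \{(x,t)\in\Omega\times(0,T): u_\gamma > k_n\}$. The inclusion $\{u_\gamma>k_{n+1}\}\subset \{(u_\gamma-k_n)_+>M\,2^{-n-1}\}$ yields $|\mathcal A_{n+1}|\leq (2^{n+1}/M)^2\, y_n$. Combining the energy estimate with the parabolic Sobolev embedding $L^\infty_t L^2_x\cap L^2_t H^1_x \hookrightarrow L^q(\Omega\times(0,T))$ for some $q>2$ (valid in $d\leq 3$) and the H\"older inequality $y_{n+1}\leq \|(u_\gamma-k_n)_+\|_{L^q}^2\,|\mathcal A_{n+1}|^{1-2/q}$, one obtains a recursion of the form
\[
y_{n+1}\leq \frac{C(\varepsilon,T)\, b^n}{(\gamma p_H)^\beta\, M^{2(1-2/q)}}\, y_n^{1+\alpha}
\]
for explicit exponents $\alpha,\beta>0$ and constant $b>1$.

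The main obstacle is the final balance of parameters required by the classical De Giorgi lemma, namely $y_0 \lesssim (\gamma p_H)^{\beta/\alpha}\, M^{2(1-2/q)/\alpha}$ up to constants depending on $\varepsilon,T,\alpha,\beta,b$. Propositions~\ref{prop:energy/entropy} and~\ref{prop:unif_energy_time} bound $y_0$ by a constant depending on $T$ and $\varepsilon$ but \emph{independent} of $\gamma$ (for $\gamma$ large, using that $\|u_\gamma\|_{L^{\gamma+1}}$ is controlled and $\Omega$ has finite measure), so the condition reduces to an inequality in $\gamma$ alone. The calibration $M = 2\gamma^{-1/3}$ is chosen so that $(\gamma p_H)^\beta\, M^{2(1-2/q)}\sim \gamma^{\beta - 2(1-2/q)/3}$ diverges as $\gamma\to\infty$; the smallness condition therefore holds as soon as $\gamma \geq \overline\gamma(T,\varepsilon)$, with the threshold explicitly computable from the bound on $y_0$ (hence the stated dependence on $T$). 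The iteration then forces $y_n \to 0$, i.e., $(u_\gamma - k_\infty)_+ \equiv 0$ a.e.\ on $\Omega\times(0,T)$, which is exactly \eqref{Linfty}.
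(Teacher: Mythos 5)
Your skeleton (truncation levels accumulating at $p_H^{1/\gamma}+2\gamma^{-1/3}$, testing with $(u_\gamma-k_n)^+$, discarding the signed source term, the $L^\infty$ bound on $\nabla\omega_\varepsilon\ast u_\gamma$ via \eqref{mass}, parabolic Sobolev embedding, and the geometric-convergence lemma) is exactly the paper's. But the quantitative closure of the iteration has a genuine gap, and in fact your final balance of parameters is backwards. Shrinking the band to width $M\sim\gamma^{-1/3}$ makes the Chebyshev step \emph{worse}: the factor $(2^{n+1}/M)^{2(1-2/q)}$ sits in the \emph{numerator} of the recursion constant and blows up like $\gamma^{+2(1-2/q)/3}$. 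This must be beaten by a $\gamma$-decaying factor coming from the energy estimate, i.e.\ your exponent $\beta$ must exceed $2(1-2/q)/3$. Your written energy inequality provides no such factor: absorbing the advection with a fixed Young parameter leaves a forcing $C(\varepsilon)\bigl(\|(u_\gamma-k_n)_+\|_{L^2}^2+k_n^2|A_n(t)|\bigr)$ whose coefficient is $O(1)$ in $\gamma$, so $\sup_t\|(u_\gamma-k_n)_+\|_{L^2}^2\lesssim C(\varepsilon,T)|\mathcal{A}_n|$ with no smallness; and even the gain $1/(\gamma p_H)$ on $\int_0^T\|\nabla(u_\gamma-k_n)_+\|_{L^2}^2$ is lost in the parabolic embedding, because the $L^2$-in-space part of the $H^1$ norm (equivalently, the mean in the mean-free Gagliardo--Nirenberg inequality) carries no $1/\gamma$. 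Hence $\beta=0$ for the dominant term, the recursion constant diverges, and the smallness condition on $y_0$ becomes \emph{harder}, not easier, as $\gamma\to\infty$; a $\gamma$-independent bound $y_0\leq C(T,\varepsilon)$ is then insufficient.

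The paper's proof closes the loop by a mechanism you are missing. It starts the levels at $k_0=p_H^{1/\gamma}+\gamma^{-1/3}$ (not at $p_H^{1/\gamma}$), so that on \emph{every} super-level set one has $u_\gamma^{\gamma}\geq\bigl(p_H^{1/\gamma}+\gamma^{-1/3}\bigr)^{\gamma}\approx e^{\gamma^{2/3}}$, and it applies Young's inequality to the advection term \emph{weighted by the degenerate diffusion} $\tfrac{\gamma}{2}u_\gamma^\gamma|\nabla(u_\gamma-k_n)^+|^2$; the complementary term is then proportional to $u_\gamma^{2-\gamma}\leq\bigl(p_H^{1/\gamma}+\gamma^{-1/3}\bigr)^{2-\gamma}$, which is super-exponentially small. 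This tiny coefficient propagates into $\sup_t\|(u_\gamma-k_n)^+\|_{L^2}^2$ and into the dissipation bound, and it is precisely what dominates the polynomially growing Chebyshev factor so that the threshold for $y_0$ tends to $+\infty$ and $y_0\leq T|\Omega|$ suffices for $\gamma\geq\overline\gamma(T,\varepsilon)$. (A purely polynomial repair of your scheme is conceivable --- take the Young parameter of order $\gamma p_H$ so the forcing becomes $C(\varepsilon)/\gamma$ times the measure of the level set, and use the mean-free Sobolev inequality --- but that is not what you wrote, and you would still need to verify that the resulting net exponent of $\gamma$ is negative, which your argument never does.)
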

\begin{remark}
    Notice that the bound \eqref{Linfty} is useless to control the pressure, since $\left(p_H^\frac 1 \gamma+\frac{2}{\sqrt[3]{\gamma}}\right)^\gamma\to +\infty$ as $\gamma\to+\infty$.
\end{remark}
\begin{proof}
To simplify notations, we set $\xi:=\frac 1 {\sqrt[3]{\gamma}}$.
 The iterative scheme is as follows.  Let us consider the sequence 
$$
k_n=p_H^{\frac 1 \gamma }+ 2 \xi -\frac{\xi}{2^n}, \quad n\geq 0,
$$
and note that ${p}_H^{\frac 1 \gamma}+\xi \leq k_n< {p}_H^{\frac 1 \gamma}+ 2 \xi$.
Now we define the sequences
$$
A_n(t):=\{x\in\Omega:\ u_\gamma\geq k_n\},\quad y_n:=\int_0^T\int_{A_n(s)} \diff x \diff s .
$$
By testing the equation against $u_{n,\gamma}:=(u_{\gamma}-k_n)^+$, we immediately infer that 
\begin{align*}
\frac{1}{2}\dfrac{d}{dt}\Vert(u_\gamma-k_n)^+\Vert^2_{L^2(\Omega)}+&\int_\Omega \gamma u_\gamma^\gamma\vert \nabla (u_\gamma-k_n)^+\vert^2 \diff x +\int_\Omega u_\gamma\nabla B_\varepsilon(u_\gamma)\cdot \nabla (u_\gamma-k_n)^+ \diff x \\&=\int_{\Omega} u_\gamma G(p_\gamma)(u_\gamma-k_n)^+ \diff x.
\end{align*}
By the definition of $B_\varepsilon$, we get
$$
\int_\Omega u_\gamma\nabla B_\varepsilon(u_\gamma)\cdot \nabla (u_\gamma-k_n)^+ \diff x =\frac{1}{\varepsilon^2}\int_\Omega {u_\gamma}\vert \nabla u_{n,\gamma}\vert^2 \diff x -\frac{1}{\varepsilon^2}\int_\Omega u_\gamma(\nabla\omega_\varepsilon\ast u_\gamma)\cdot \nabla u_{n,\gamma} \diff x.
$$
The first term is nonnegative. For the second we use that $\gamma>2$, $\Vert u_\gamma\Vert_{L^\infty(0,\infty;L^1(\Omega))}\leq {p}_H^{\frac 1 \gamma}\vert \Omega\vert$ thanks to \eqref{mass} as well as $u_{\gamma} \geq k_n \geq p_H^{1/\gamma} + \xi$ on $A_n$ to obtain
\begin{align*}
&\frac{1}{\varepsilon^2}\int_\Omega u_\gamma(\nabla\omega_\varepsilon\ast u_\gamma)\cdot \nabla u_{n,\gamma} \diff x \leq \frac{1}{\varepsilon^4}\frac{1}{2\gamma}\int_{A_n} u^{2-\gamma}_\gamma\vert \nabla\omega_\varepsilon\ast u_\gamma\vert^2 \diff x +\frac{\gamma}{2}\int_\Omega u^{\gamma}_\gamma\vert \nabla u_{n,\gamma}\vert ^2 \diff x
\\&\leq \frac{1}{\varepsilon^4}\frac 1 {2\gamma \left({p}_H^\frac 1 \gamma +\xi\right)^{\gamma-2}}\Vert \nabla\omega_\varepsilon\ast u_\gamma\Vert_{L^\infty(\Omega)}^{2}\int_{A_n} \diff x +\frac{\gamma}{2}\int_\Omega u_\gamma^{\gamma}\vert \nabla u_{n,\gamma}\vert ^2 \diff x
\\
&\leq \frac{1}{\varepsilon^4}\frac 1 {2\gamma \left({p}_H^\frac 1 \gamma +\xi\right)^{ {\gamma-2}}}\Vert \nabla\omega_\varepsilon\Vert_{L^\infty(\Omega)}^{2}\Vert u_\gamma\Vert_{L^1(\Omega)}^{2}\int_{A_n} \diff x +\frac{\gamma}{2}\int_\Omega u_\gamma^{\gamma}\vert \nabla u_{n,\gamma}\vert ^2 \diff x \\&
\leq \frac {C_1} {\gamma \left({p}_H^\frac 1 \gamma +\xi\right)^{ {\gamma-2}}}\int_{A_n} \diff x +\frac{\gamma}{2}\int_\Omega u_\gamma^{\gamma}\vert \nabla u_{n,\gamma}\vert ^2 \diff x ,
\end{align*}
where $C_1>0$ is a constant that depends on $\varepsilon$ but not on $T$ and $\gamma$.
Moreover, since on $A_n$ we have $u_\gamma\geq {p}_H^{\frac 1 \gamma}$ and thus $p_\gamma\geq p_H$, we immediately infer that
\begin{align*}
\int_{\Omega} u_\gamma G(p_\gamma)(u_\gamma-k_n)^+ \diff x \leq 0. 
\end{align*}
We can then sum up the results above to obtain 
\begin{align*}
\frac{1}{2}\dfrac{d}{dt}\Vert(u_\gamma-k_n)^+\Vert^2_{L^2(\Omega)}+\frac{\gamma}{2}\int_\Omega u_\gamma^\gamma\vert \nabla (u_\gamma-k_n)^+\vert^2 \diff x  \leq \frac {C_1} {\gamma \left({p}_H^\frac 1 \gamma +\xi\right)^{ {\gamma-2}}}\int_{A_n} \diff x,
\end{align*}
which also implies, since on $A_n(t)$ we have $u^\gamma_\gamma\geq \left({p}_H^\frac 1 \gamma +\xi\right)^{ {\gamma}}$,
\begin{align*}
\frac{1}{2}\dfrac{d}{dt}\Vert(u_\gamma-k_n)^+\Vert^2_{L^2(\Omega)}+\frac{\gamma \left({p}_H^\frac 1 \gamma +\xi\right)^{ {\gamma}}}{2}\int_\Omega \vert \nabla (u_\gamma-k_n)^+\vert^2 \diff x  \leq \frac {C_1} {\gamma \left({p}_H^\frac 1 \gamma +\xi\right)^{ {\gamma-2}}}\int_{A_n} \diff x.
\end{align*}
It is now clear that,  
\begin{align}
&\sup_{t\in[0,T]}\Vert u_{n,\gamma}(t)\Vert^2_{L^2(\Omega)}\leq \frac {2C_1} {\gamma \left({p}_H^\frac 1 \gamma +\xi\right)^{ {\gamma-2}}}y_n=:Z_n,\label{e1}\\& \gamma\left({p}_H^\frac 1 \gamma +\xi\right)^{ {\gamma}}\int_0^T\int_\Omega \vert \nabla (u_\gamma-k_n)^+(s)\vert^2 \diff x \diff s \leq Z_n.
\label{estimates}
\end{align}
where we used, by the assumptions on the initial conditions, $\Vert u_{n,\gamma}(0)\Vert_{L^2(\Omega)}=0$. 
Now for any $t$ and for almost any $x\in A_{n+1}(t)$, we get
\begin{align*}
&u_{n,\gamma}(x,t)=
\underbrace{u_\gamma(x,t)-\left[{p}_H^{\frac 1 \gamma}+ 2 \xi -\frac{\xi}{2^{n+1}}\right]}_{u_{n+1,\gamma}(x,t)\geq 0}+  \xi\left[\frac{1}{2^{n}}-\frac{1}{2^{n+1}}\right]\geq \frac{\xi}{2^{n+1}}.
\end{align*}
Then we have
\begin{align*}
\int_0^T\int_{\Omega}\vert u_{n,\gamma}\vert^3\diff x \diff s\geq \int_0^T\int_{A_{n+1}(s)}\vert u_{n,\gamma}\vert^3\diff x \diff s\geq \left(\frac{\xi}{2^{n+1}}\right)^3\int_0^T\int_{A_{n+1}(s)}\diff x \diff s=\left(\frac{\xi}{2^{n+1}}\right)^3y_{n+1}.
\end{align*}
Then we have
\begin{align}
&\left(\frac{\xi}{2^{n+1}}\right)^3y_{n+1}\leq   \left(\int_0^T\int_{\Omega}\vert 
u_{n,\gamma}\vert^{\frac{10}{3}}\diff x \diff s\right)^{\frac{9}{10}}\left(\int_0^T\int_{A_n(s)}\diff x \diff s\right)^{\frac{1}{10}}.
\label{est1}
\end{align}
For the sake of clarity we now present the argument in the case $d=3$, but it can be easily adapted to any dimension $d\geq1$.
We recall that by a variant of the three-dimensional Sobolev-Gagliardo-Nirenberg inequality (see, e.g.,\cite[Ch.9]{Brezis2010FunctionalAS}) we get
\begin{align*}
&\| v - \overline{v}\|_{L^\frac{10}{3}(\Omega)}\leq {C_G}\, \|v\|_{L^2(\Omega)}^{\frac{2}{5}}\|\nabla v\|_{L^2(\Omega)}^{\frac{3}{5}} \quad \forall  v \in H^1(\Omega),
\end{align*}
with ${C_G}>0$ depending only on $\Omega$. Therefore,
\begin{align*}
&\int_0^T\int_{\Omega}\vert u_{n,\gamma}\vert^{\frac{10}{3}}\diff x \diff s\leq
2^{\frac 7 3}\int_0^T\int_{\Omega}\vert u_{n,\gamma}-\overline{u}_{n,\gamma}\vert^{\frac{10}{3}}\diff x \diff s+2^{\frac 7 3}\int_0^T\int_{\Omega}\vert \overline{u}_{n,\gamma}\vert^{\frac{10}{3}}\diff x \diff s\\&\leq C_G2^{\frac 7 3}\int_{0}^T\Vert \nabla u_{n,\gamma}\Vert^2_{L^2(\Omega)}\Vert u_{n,\gamma}\Vert^{\frac{4}{3}}_{L^2(\Omega)}\diff s+\frac{2^{\frac 7 3}}{\vert \Omega\vert ^{\frac 7 3}}\int_0^T\Vert u_{n,\gamma}\Vert_{L^1(\Omega)}^{\frac {10} 3}\\&\leq C_G2^{\frac 7 3}\int_{0}^T\Vert \nabla u_{n,\gamma}\Vert^2_{L^2(\Omega)}\Vert u_{n,\gamma}\Vert_{L^2(\Omega)}^{\frac{4}{3}}\diff s+\frac{2^{\frac 7 3}}{\vert \Omega\vert ^{\frac 2 3}}\int_0^T\Vert u_{n,\gamma}\Vert_{L^2(\Omega)}^{\frac {10} 3},
\end{align*}
so that by \eqref{e1} and \eqref{estimates} we immediately infer
\begin{align*}
\int_0^T \int_{\Omega} &\vert u_{n,\gamma}\vert^{\frac{10}{3}}\diff x \diff s\\&\leq \frac{C_G}{\gamma\left({p}_H^\frac 1 \gamma +\xi\right)^{ {\gamma}}}\sup_{t\in [0,T]}\Vert u_{n,\gamma}(t)\Vert^{\frac{4}{3}}_{L^2(\Omega)}2^{\frac 7 3}\gamma\left({p}_H^\frac 1 \gamma +\xi\right)^{ {\gamma}}\int_{0}^T\Vert \nabla u_{n,\gamma}(s)\Vert^2_{L^2(\Omega)}\diff s
\\
& \qquad +\sup_{t\in[0,T]}\Vert u_{n,\gamma}(t)\Vert_{L^2(\Omega)}^{\frac {10} 3}\frac{T2^{\frac 7 3}}{\vert \Omega\vert ^{\frac 2 3}}
\\
& \leq \frac{2^{\frac 7 3}C_G}{\gamma\left({p}_H^\frac 1 \gamma +\xi\right)^{ {\gamma}}} Z_n^{\frac{5}{3}}+\frac{T2^{\frac 7 3}}{\vert \Omega\vert ^{\frac 2 3}}Z_n^{\frac{5}{3}}
\leq \frac{T2^{\frac {10} 3}}{\vert \Omega\vert ^{\frac 2 3}} Z_n^{\frac 5 3}\leq \frac {C_2T} {\gamma^{\frac 5 3} \left({p}_H^\frac 1 \gamma +\xi\right)^{ {\frac 5 3\gamma-\frac {10} 3}}} y_n^{\frac 5 3},
\end{align*}
with $C_2=C_2(\varepsilon)>0$. Note that we have assumed $\gamma$ sufficiently large, say $\gamma\geq \gamma_0(T)>2$ so that
\begin{align}
\gamma\left({p}_H^\frac 1 \gamma +\xi\right)^\gamma\geq \dfrac{C_G\vert \Omega\vert^\frac 2 3}{T}.
    \label{AA}
\end{align}
 Coming back to \eqref{est1}, we get
\begin{align*}
    \left(\frac{\xi}{2^{n+1}}\right)^3 y_{n+1}\leq \frac{C_2^{\frac 9 {10}} T^{\frac{9}{10}}}{\gamma^{\frac {3} 2}\left({p}_H^\frac 1 \gamma +\xi\right)^{\frac 3 2\gamma-3}}y_n^{\frac 8 5},
\end{align*}
i.e., recalling the definition of $\xi$,
\begin{align*}
     y_{n+1}\leq \frac{2^{3n+3}C_2^{\frac 9 {10}} T^{\frac{9}{10}}}{\gamma^{\frac {1} 2}\left({p}_H^\frac 1 \gamma +\xi\right)^{\frac 3 2\gamma-3}}y_n^{\frac 8 5}.
\end{align*}
Due to Lemma \ref{conva} with $b=2^3>1$, $C=\frac{2^{3}C_2^{\frac 9 {10}} T^{\frac{9}{10}}}{\gamma^{\frac {1} 2}\left({p}_H^\frac 1 \gamma +\xi\right)^{\frac 3 2\gamma-3}}>0$, $\epsilon=\frac{3}{5}$, we get that ${y}_n\to 0$ if 
\begin{align}
{y}_0\leq C^{-\frac{5}{3}}b^{-\frac{25}{9}} \iff y_0\leq 2^{-\frac{25}{3}}\frac{\gamma^{\frac 5 6}\left({p}_H^\frac 1 \gamma +\xi\right)^{\frac 5 2\gamma-5}}{ 2^5C_2^{\frac 3 2}T^{\frac 3 2}}
\label{lastbis}
\end{align}
As we have
$
y_0\leq T\vert \Omega\vert,
$
it is enough to ask for $\gamma$ sufficiently large, say $\gamma\geq \gamma_1(T,\varepsilon)\geq \gamma_0(T)$ such that
$$
T^\frac{5}{2}\vert \Omega\vert\leq2^{-\frac{25}{3}}\frac{\gamma^{\frac 5 6}\left({p}_H^\frac 1 \gamma +\xi\right)^{\frac 5 2\gamma-5}}{ 2^5C_2^{\frac 3 2}},
$$
i.e.,
$$
y_0\leq T\vert \Omega\vert\leq 2^{-\frac{25}{3}}\frac{\gamma^{\frac 5 6}\left({p}_H^\frac 1 \gamma +\xi\right)^{\frac 5 2\gamma-5}}{ 2^5C_2^{\frac 3 2}T^{\frac 3 2}}.
$$
This way $y_n\to 0$ as long as $\overline{\gamma}(T,\varepsilon)\geq\gamma_1(T,\varepsilon)$ and any $\gamma\geq \overline{\gamma}(T,\varepsilon)$. 
\end{proof}
\subsection{Higher-order regularity results, uniformly in $\gamma$.}
\begin{lemma}   \label{ests}
    For any $T>0$ there exists $C=C(T,\varepsilon)>0$ such that 
    \begin{align}
    &\Vert p_\gamma\Vert_{L^2(0,T;H^1(\Omega))}+\Vert  p_\gamma\Vert_{L^3(\Omega\times(0,T))}\leq C,\qquad \forall \gamma> 1,\label{est_p}
    \\
    &\Vert \partial_t u_\gamma\Vert_{L^2(0,T;(H^1(\Omega))^\prime)}\leq C,\quad \forall \gamma\geq\overline{\gamma}(T,\varepsilon) \qquad \text{(see } \eqref{Linfty}) ,\label{dtu1}
    \\
    & \Vert\partial_t u_\gamma\Vert_{L^\infty(0,T;L^1(\Omega))}+\Vert \partial_t p_\gamma\Vert_{L^1( \Omega)\times (0,T)}\leq C,\qquad \forall \gamma\geq 1.  \label{bb}
    \end{align}
\end{lemma}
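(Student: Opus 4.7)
The plan is to establish each of the three bounds in turn, relying on the De Giorgi $L^{\infty}$ estimate of Lemma~\ref{DeG}, the entropy/energy dissipations of Propositions~\ref{prop:energy/entropy}--\ref{prop:unif_energy_time}, and the convenient rewriting $u_{\gamma}\nabla\mu_{\gamma}=\nabla w_{\gamma}-\varepsilon^{-2}u_{\gamma}\,\nabla\omega_{\varepsilon}\ast u_{\gamma}$, with $w_{\gamma}:=\tfrac{\gamma}{\gamma+1}u_{\gamma}^{\gamma+1}+\tfrac{1}{2\varepsilon^{2}}u_{\gamma}^{2}$. All test-function computations below are formal and would be justified by the approximating scheme of \cite{elbar-skrzeczkowski}.

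For \eqref{est_p}, I would first test \eqref{nonlocal} against $\gamma u_{\gamma}^{\gamma-1}$; integration by parts and the identity $\gamma u_{\gamma}^{\gamma-1}\nabla u_{\gamma}=\nabla p_{\gamma}$ produce
\begin{equation*}
\frac{d}{dt}\!\int_{\Omega}p_{\gamma}+(\gamma-1)\!\int_{\Omega}|\nabla p_{\gamma}|^{2}+(\gamma-1)\!\int_{\Omega}\nabla p_{\gamma}\cdot\nabla B_{\varepsilon}u_{\gamma}+\gamma\!\int_{\Omega}p_{\gamma}^{2}=\gamma p_{H}\!\int_{\Omega}p_{\gamma}.
\end{equation*}
A Young inequality on the cross term, time-integration, and division by $(\gamma-1)/2$ and $\gamma/2$ respectively, combined with $\int_{0}^{T}\!\|\nabla B_{\varepsilon}u_{\gamma}\|_{L^{2}}^{2}\le C$ (from the entropy dissipation and the nonlocal Poincar\'e inequality \eqref{p_p}) and $\int p_{\gamma}(0)\le p_{H}|\Omega|$, yield $\|p_{\gamma}\|_{L^{2}(0,T;H^{1}(\Omega))}\le C$ uniformly in $\gamma\ge 2$. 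For the $L^{3}$ bound, I would then test the resulting equation for $p_{\gamma}$ against $p_{\gamma}$ itself, which gives
\begin{equation*}
\tfrac{1}{2}\frac{d}{dt}\!\int_{\Omega}p_{\gamma}^{2}+(2\gamma-1)\!\int_{\Omega}p_{\gamma}|\nabla p_{\gamma}|^{2}+(2\gamma-1)\!\int_{\Omega}p_{\gamma}\nabla p_{\gamma}\cdot\nabla B_{\varepsilon}u_{\gamma}+\gamma\!\int_{\Omega}p_{\gamma}^{3}=\gamma p_{H}\!\int_{\Omega}p_{\gamma}^{2}.
\end{equation*}
After Young's inequality on the cross term and division by $\gamma$, the crux is to control $\int_{0}^{T}\!\int p_{\gamma}|\nabla B_{\varepsilon}u_{\gamma}|^{2}$. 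Writing $|\nabla B_{\varepsilon}u_{\gamma}|\lesssim|\nabla u_{\gamma}|+|\nabla\omega_{\varepsilon}\ast u_{\gamma}|$, the first piece is bounded using $\int p_{\gamma}|\nabla u_{\gamma}|^{2}=\int u_{\gamma}^{\gamma}|\nabla u_{\gamma}|^{2}\le\|u_{\gamma}\|_{\infty}\!\int u_{\gamma}^{\gamma-1}|\nabla u_{\gamma}|^{2}\le C/\gamma$ thanks to the entropy dissipation for $\nabla u_{\gamma}^{(\gamma+1)/2}$, and the second one uses the pointwise bound $\|\nabla\omega_{\varepsilon}\ast u_{\gamma}\|_{\infty}\le C$ together with the $L^{2}_{t,x}$ control on $p_{\gamma}$ just established.

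For \eqref{dtu1}, the bound follows by duality: for $\varphi\in L^{2}(0,T;H^{1}(\Omega))$, the weak formulation \eqref{weakf} and the rewriting of $u_{\gamma}\nabla\mu_{\gamma}$ give
\begin{equation*}
\Bigl|\!\int_{0}^{T}\!\langle\partial_{t}u_{\gamma},\varphi\rangle\,dt\Bigr|\le\|\nabla w_{\gamma}\|_{L^{2}_{t,x}}\|\nabla\varphi\|_{L^{2}_{t,x}}+\tfrac{1}{\varepsilon^{2}}\|u_{\gamma}\nabla\omega_{\varepsilon}\ast u_{\gamma}\|_{L^{2}_{t,x}}\|\nabla\varphi\|_{L^{2}_{t,x}}+\|u_{\gamma}G(p_{\gamma})\|_{L^{2}_{t,x}}\|\varphi\|_{L^{2}_{t,x}}.
\end{equation*}
The first factor is uniform via $\|\nabla w_{\gamma}\|_{L^{2}_{t,x}}\le\|u_{\gamma}\|_{\infty}(\|\nabla p_{\gamma}\|_{L^{2}_{t,x}}+\varepsilon^{-2}\|\nabla u_{\gamma}\|_{L^{2}_{t,x}})$ by \eqref{Linfty}, \eqref{est_p} and \eqref{p_p}; the convolution term uses $\|\nabla\omega_{\varepsilon}\|_{\infty}$ and the mass bound \eqref{mass}; and $\|u_{\gamma}G(p_{\gamma})\|_{L^{2}_{t,x}}\lesssim 1+\|p_{\gamma}\|_{L^{3}_{t,x}}$ is controlled by \eqref{est_p}.

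For \eqref{bb}, which is the most delicate estimate, I would adapt a Kato-type argument inspired by the Aronson--B\'{e}nilan technique. Differentiating \eqref{nonlocal} in time and testing against a smooth approximation of $\mathrm{sign}(\partial_{t}u_{\gamma})$, I use that $\partial_{t}w_{\gamma}=(\gamma u_{\gamma}^{\gamma}+\varepsilon^{-2}u_{\gamma})\partial_{t}u_{\gamma}$ has the same sign as $\partial_{t}u_{\gamma}$, so Kato's inequality on the torus yields $\int_{\Omega}\Delta\partial_{t}w_{\gamma}\,\mathrm{sign}(\partial_{t}w_{\gamma})\le 0$. Since $G'(p)=-1$, the source contribution $\int|\partial_{t}u_{\gamma}|(p_{H}-p_{\gamma})-\gamma\!\int p_{\gamma}|\partial_{t}u_{\gamma}|$ has its second (sign-definite) piece with favourable sign, while the convolution terms are bounded in $L^{1}$ by means of $\|\nabla\omega_{\varepsilon}\|_{\infty}$ and the previous estimates. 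Altogether,
\begin{equation*}
\frac{d}{dt}\|\partial_{t}u_{\gamma}\|_{L^{1}}+\gamma\!\int_{\Omega}u_{\gamma}^{\gamma}|\partial_{t}u_{\gamma}|\le p_{H}\|\partial_{t}u_{\gamma}\|_{L^{1}}+C,
\end{equation*}
and Gronwall's inequality propagates the initial bound $\|\partial_{t}u_{\gamma}(0)\|_{L^{1}}\le C$ (which uses all of Assumption~\ref{initial2} applied to $\partial_{t}u_{\gamma}(0)=\tfrac{1}{\gamma+1}\Delta u_{0}^{\gamma+1}+\tfrac{1}{2\varepsilon^{2}}\Delta u_{0}^{2}-\varepsilon^{-2}\mathrm{div}(u_{0}\nabla\omega_{\varepsilon}\ast u_{0})+u_{0}G(p_{0})$ in $L^{1}$) to all of $[0,T]$, yielding the $L^{\infty}(0,T;L^{1}(\Omega))$ control on $\partial_{t}u_{\gamma}$ together with the byproduct $\int_{0}^{T}\gamma\!\int u_{\gamma}^{\gamma}|\partial_{t}u_{\gamma}|\le C$. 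For $\partial_{t}p_{\gamma}=\gamma u_{\gamma}^{\gamma-1}\partial_{t}u_{\gamma}$, splitting $\Omega$ into $\{u_{\gamma}\ge 1/2\}$ (where $u_{\gamma}^{\gamma-1}\le 2 u_{\gamma}^{\gamma}$) and $\{u_{\gamma}<1/2\}$ (where $\gamma u_{\gamma}^{\gamma-1}\le\gamma 2^{1-\gamma}\le 2$ for all $\gamma\ge 1$) and combining the two previous bounds gives $\|\partial_{t}p_{\gamma}\|_{L^{1}(\Omega\times(0,T))}\le C$. The main obstacle will be to rigorously justify the time-differentiation and the sign-testing step, which I expect to handle by working with an approximating scheme where the required smoothness is available and passing to the limit via the uniform estimates derived above.
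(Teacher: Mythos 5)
Your overall architecture coincides with the paper's: \eqref{est_p} is obtained by testing against $\gamma u_\gamma^{\gamma-1}$ and then against $\gamma u_\gamma^{2\gamma-1}$, \eqref{dtu1} by duality against $\varphi\in L^2(0,T;H^1(\Omega))$ using the De Giorgi bound \eqref{Linfty}, and \eqref{bb} by differentiating the equation in time, testing against $\mathrm{sign}(\partial_t u_\gamma)$ with Kato's inequality, applying Gronwall from the initial bound supplied by Assumption~\ref{initial2}, and finally splitting $\Omega$ into $\{u_\gamma\ge 1/2\}$ and $\{u_\gamma<1/2\}$ to pass from $u_\gamma\vert\partial_t p_\gamma\vert$ to $\vert\partial_t p_\gamma\vert$. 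The one genuinely different sub-argument is your treatment of the nonlocal cross term in the $L^3$ estimate: the paper integrates by parts so that the convolution term becomes $\frac{1-2\gamma}{2\gamma\varepsilon^2}\int_\Omega(\Delta\omega_\varepsilon\ast u_\gamma)\,u_\gamma^{2\gamma}$, controlled by $\Vert\Delta\omega_\varepsilon\Vert_{L^\infty}\Vert u_\gamma\Vert_{L^1}\int_\Omega p_\gamma^2$, i.e.\ only by the mass bound and the already-established $L^2$ control on $p_\gamma$; you instead apply Young's inequality and estimate $\int p_\gamma\vert\nabla u_\gamma\vert^2\le\Vert u_\gamma\Vert_{L^\infty}\int u_\gamma^{\gamma-1}\vert\nabla u_\gamma\vert^2\le C/\gamma$ via the entropy dissipation. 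Your route is valid but invokes the $L^\infty$ bound of Lemma~\ref{DeG}, which is only available for $\gamma\ge\overline{\gamma}(T,\varepsilon)$; hence it does not deliver the $L^3$ part of \eqref{est_p} in the stated range $\forall\gamma>1$ with a $\gamma$-uniform constant, whereas the paper's integration by parts does. Since only large $\gamma$ matters for Theorem~\ref{thm:incompressible}, this is a limitation rather than a fatal flaw, but you should either restrict the statement or switch to the integration-by-parts bookkeeping.

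A second, smaller imprecision: in the differential inequality for $\Vert\partial_t u_\gamma\Vert_{L^1(\Omega)}$ the convolution term $-\varepsilon^{-2}\int_\Omega\mathrm{div}(u_\gamma(\nabla\omega_\varepsilon\ast\partial_t u_\gamma))\,\mathrm{sign}(\partial_t u_\gamma)\diff x$ is \emph{not} bounded by a constant: expanding it gives a bound of the form $C(\varepsilon)\bigl(1+\Vert\nabla u_\gamma(t)\Vert_{L^1(\Omega)}\bigr)\Vert\partial_t u_\gamma(t)\Vert_{L^1(\Omega)}$, because $\Vert\nabla\omega_\varepsilon\ast\partial_t u_\gamma\Vert_{L^\infty}$ and $\Vert\Delta\omega_\varepsilon\ast\partial_t u_\gamma\Vert_{L^\infty}$ are themselves controlled by $\Vert\partial_t u_\gamma\Vert_{L^1}$. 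So your displayed inequality with ``$+C$'' on the right is incorrect as written; the correct version has a time-dependent Gronwall coefficient, which is harmless since $t\mapsto\Vert\nabla u_\gamma(t)\Vert_{L^1(\Omega)}$ lies in $L^1(0,T)$ uniformly in $\gamma$, but the inequality should be stated in that form for Gronwall to be legitimately applied.
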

\begin{proof}
The arguments of the proof are often written formally for simplicity, but can be easily made rigorous in a suitable approximating scheme.
Note that from  Proposition \ref{prop:energy/entropy} we are able to deduce that 
\begin{align}
p_\gamma=u_\gamma^\gamma\in L^{2+\frac{1}{\gamma}}(\Omega\times(0,T))\hookrightarrow L^{2}(\Omega\times(0,T))
\label{fundamental}
\end{align}
uniformly in $\gamma$. Thus, to prove the $H^1$ bound in \eqref{est_p},  we only need to find an estimate for the gradient of $p_\gamma$.
Let us consider $\int_\Omega u_\gamma^\gamma \diff x$ and compute its time derivative: from \eqref{nonlocal} we infer
\begin{align}
&\frac{d}{dt}\int_\Omega u_\gamma^\gamma \diff x = \int_\Omega \gamma u_\gamma^{\gamma-1}\partial_t u_\gamma \diff x \nonumber\\&= -\gamma \int_\Omega u_\gamma\nabla(u^{\gamma-1})\nabla (u_\gamma^\gamma)\diff x-\gamma\int_\Omega u_\gamma\nabla(u_\gamma^{\gamma-1})\cdot\nabla B_\varepsilon(u_\gamma) \diff x+\gamma\int_\Omega u_\gamma(p_H-p_\gamma)u_\gamma^{\gamma-1}\diff x\nonumber
\\&=-\gamma^2(\gamma-1)\int_\Omega u_\gamma^{2\gamma-2}\vert \nabla u_\gamma\vert^2 \diff x-\gamma\int_\Omega u_\gamma\nabla(u_\gamma^{\gamma-1})\cdot\nabla B_\varepsilon(u_\gamma) \diff x+\gamma\int_\Omega u_\gamma(p_H-p_\gamma)u_{\gamma}^{\gamma-1}\diff x\nonumber\\&
=-(\gamma-1)\int_\Omega \left\vert \nabla (u_\gamma^\gamma)\right\vert^2\diff x-\gamma(\gamma-1)\int_\Omega u_\gamma^{\gamma-1}\nabla u_\gamma\cdot\nabla B_\varepsilon(u_\gamma) \diff x+\gamma\int_\Omega u_\gamma(p_H-p_\gamma)u_\gamma^{\gamma-1}\diff x.
\label{est2}
\end{align}
Due to \eqref{operatorB}, we have 
\begin{align*}
&\gamma(\gamma-1)\int_\Omega u_\gamma^{\gamma-1}\nabla u_\gamma\cdot\nabla B_\varepsilon(u_\gamma) \diff x\\&= \frac{1}{\varepsilon^2}{\gamma(\gamma-1)}\int_\Omega u_\gamma^{\gamma-1}\vert \nabla u_\gamma\vert^2-\frac{1}{\varepsilon^2}{\gamma(\gamma-1)}\int_\Omega u_\gamma^{\gamma-1}\nabla u_\gamma\cdot \nabla(\omega_\varepsilon\ast u_\gamma) \diff x
\end{align*}
By the Young inequality, recalling that $\Vert u_\gamma\Vert_{L^\infty(0,T;L^2(\Omega))}\leq C$ (due to the uniform bound on the energy and the non-local Poincar\'e inequality \eqref{eq:poincare_L2}, see Lemma \ref{exist}), we get  
\begin{align*}
&\frac{1}{\varepsilon^2}{\gamma(\gamma-1)}\int_\Omega u_\gamma^{\gamma-1}\nabla u_\gamma\cdot \nabla(\omega\ast u_\gamma) \diff x\\&\leq \frac{\gamma^2(\gamma-1)}{2}\int_\Omega u_\gamma^{2\gamma-2}\vert \nabla u_\gamma\vert^2\diff x +\frac{1}{\varepsilon^4}\frac{\gamma-1}{2}\int_\Omega \vert \nabla\omega_\varepsilon\ast u_\gamma\vert^2 \diff x\\&\leq \frac{\gamma^2(\gamma-1)}{2}\int_\Omega u_\gamma^{2\gamma-2}\vert \nabla u_\gamma\vert^2\diff x+\frac{1}{\varepsilon^4}\frac{\gamma-1}{2}\Vert \nabla\omega_\varepsilon\Vert^2_{L^1(\Omega)}\Vert {u}_\gamma\Vert^2_{L^2(\Omega)}\\&\leq \frac{(\gamma-1)}{2}\int_\Omega \vert \nabla (u_\gamma^{\gamma})\vert^2\diff x+\frac{C(T,\varepsilon)(\gamma-1)}{2}.
\end{align*}
The last term in \eqref{est2} can be controlled by
$$
\int_\Omega u_\gamma(p_H-p_\gamma)\gamma u_\gamma^{\gamma-1}\diff x\leq \int_\Omega (p_H-p_\gamma)\gamma u_\gamma^{\gamma}\diff x \leq \int_{p_\gamma\leq p_H} (p_H-p_\gamma)\gamma u^{\gamma}_\gamma \diff x\leq \gamma\vert \Omega\vert  p_H^2 .
$$

Therefore, from \eqref{est2} we get
\begin{align}
\frac{1}{\gamma-1}\frac{d}{dt}\int_\Omega u_\gamma^\gamma \diff x+\frac{1}{2}\int_\Omega \left\vert \nabla (u_\gamma^\gamma)\right\vert^2\diff x\leq \frac{C}{2}+C\frac{\gamma}{\gamma-1}\leq C(T,\varepsilon),
\label{grad}
\end{align}
showing that $\nabla u^\gamma\in L^2(\Omega\times (0,T))$ uniformly in $\gamma$ so that
$u_\gamma^\gamma$ is bounded uniformly in $L^2(0,T;H^1(\Omega))$. We now need a similar estimate to show the $L^3$  bound in \eqref{est_p}. We have 
\begin{align}
\frac{1}{2\gamma}&\frac{\diff}{\diff t}\int_\Omega u_\gamma^{2\gamma} \diff x = \int_\Omega  u_\gamma^{2\gamma-1}\partial_t u_\gamma \diff x \nonumber
\\
&= - \int_\Omega u_\gamma\nabla(u_\gamma^{2\gamma-1})\nabla (u_\gamma^\gamma)\diff x- \int_\Omega u_\gamma\nabla(u_\gamma^{2\gamma-1})\cdot\nabla B_\varepsilon(u_\gamma) \diff x+\int_\Omega u_\gamma(p_H-p)u_\gamma^{2\gamma-1}\diff x\nonumber
\\
&=-\gamma(2\gamma-1)\int_\Omega u_\gamma^{3\gamma-2}\vert \nabla u_\gamma\vert^2 \diff x-\frac{1}{\varepsilon^2}(2\gamma-1)\int_\Omega u_\gamma^{2\gamma-1}\vert\nabla u_\gamma\vert^2 \diff x\nonumber
\\
& \quad +\frac{1}{\varepsilon^2}{(2\gamma-1)}\int_\Omega u_\gamma^{2\gamma-1}\nabla u_\gamma\cdot (\nabla \omega_\varepsilon\ast u_\gamma) \diff x+ \int_\Omega (p_H-p_\gamma)u_\gamma^{2\gamma}\diff x.
\label{est3}
\end{align}
Notice now that, by Young's inequality for convolutions and after integration by parts,
\begin{align*}
\frac{1}{\varepsilon^2}{(2\gamma-1)}&\int_\Omega u^{2\gamma-1}_\gamma\nabla u_\gamma\cdot (\nabla \omega_\varepsilon\ast u_\gamma) \diff x
\\
&=\frac{1}{\varepsilon^2} \frac{2\gamma-1}{2\gamma}\int_\Omega \nabla u^{2\gamma}_\gamma\cdot (\nabla\omega_\varepsilon\ast u_\gamma)\diff x
=\frac{1}{\varepsilon^2}\frac{1-2\gamma}{2\gamma}\int_\Omega (\Delta\omega_\varepsilon\ast u_\gamma)u_\gamma^{2\gamma}\diff x
\\
&\leq \frac{1}{\varepsilon^2} \frac{2\gamma-1}{2\gamma}\Vert \Delta \omega_\varepsilon\Vert_{L^\infty(\Omega)}\Vert u_\gamma\Vert_{L^1(\Omega)}\int_\Omega p_\gamma^2\diff x\leq C(\varepsilon) \frac{2\gamma-1}{2\gamma}\int_\Omega p_\gamma^2\diff x.
\end{align*}
We thus get, integrating \eqref{est3} over $(0,T)$ and using the $L^2$ bound on $p_\gamma$ in~\eqref{fundamental},
\begin{align*}
\frac 1{2\gamma}\int_\Omega p_\gamma^2(T) \diff x &+{\gamma(2\gamma-1)}\int_0^T\int_\Omega u_\gamma^{3\gamma-2}\vert \nabla u_\gamma\vert^2 \diff x \diff s
+\frac{2\gamma-1}{\varepsilon^2}\int_0^T\int_\Omega u_\gamma^{2\gamma-1}\vert\nabla u_\gamma\vert^2 \diff x \diff s
\\
&+\int_0^T\int_\Omega p_\gamma^{3\gamma}\diff x \diff s\leq C(T,\varepsilon)\left(\dfrac{2\gamma-1}{2\gamma}+1\right)\leq C(T,\varepsilon),
\end{align*}
where we used that $\frac{2\gamma-1}{2\gamma}\to 1$ as $\gamma\to \infty$. From this we deduce the uniform $L^3$ estimate in~\eqref{est_p}.
\\

To prove \eqref{dtu1}, we compute for any $\varphi\in L^2(0,T;H^1(\Omega))$ and $\gamma\geq \overline{\gamma}$, with $\overline{\gamma}(T)$ as in \eqref{Linfty} 
\begin{align*}
     &\left\vert \int_0^T\langle \partial_t u_\gamma,\varphi \rangle \diff t\right\vert\leq \left\vert \int_0^T\int_\Omega u_\gamma\nabla p_\gamma\cdot \nabla \varphi \diff x \diff t \right\vert+\left\vert \int_0^T\frac 1 {\varepsilon^2}\int_\Omega u_\gamma\nabla u_\gamma\cdot \nabla \varphi \diff x \diff t \right\vert \\&+\left\vert \int_0^T\frac{1}{\varepsilon^2}\int_\Omega u_\gamma(\nabla \omega_\varepsilon\ast u_\gamma)\cdot \nabla \varphi \diff x \diff t \right\vert+\left\vert \int_0^T\int_\Omega u_\gamma(p_H-u_\gamma^\gamma)\varphi \diff x \diff t \right\vert.
     \end{align*}
Note that by Young's inequality for convolutions we get
\begin{align*}
   &\Vert \nabla \omega_\varepsilon\ast u_\gamma\Vert_{L^2(\Omega\times(0,T))}^2\leq 
   \int_0^T\Vert \nabla\omega_\varepsilon\ast u_\gamma\Vert_{L^\infty(\Omega)}^2\vert \Omega\vert \diff t\leq \vert \Omega\vert \Vert \nabla\omega_\varepsilon\Vert_{L^\infty(\Omega)}^2\int_0^T\Vert u_\gamma\Vert_{L^1(\Omega)}^2\diff t
   \\&\leq \vert \Omega\vert \Vert \nabla\omega_\varepsilon\Vert_{L^\infty(\Omega)}^2\int_0^T\left(\int_\Omega p_H^\frac 1 \gamma \diff x\right)^2\diff t\leq \vert \Omega\vert^3 \Vert \nabla\omega_\varepsilon\Vert_{L^\infty(\Omega)}^2 p_H^\frac 2 \gamma T\leq C(\varepsilon,T), 
\end{align*}
by \eqref{mass}. Therefore, by Lemma \ref{exist}, \eqref{Linfty} and \eqref{est_p}    
     \begin{align*}
    \left\vert \int_0^T\langle \partial_t u_\gamma,\varphi \rangle \diff t\right\vert \leq\,& \Vert u_\gamma\Vert_{L^\infty(\Omega\times(0,T))}\Vert \nabla p_\gamma\Vert_{L^2(0,T;L^2(\Omega))}\Vert \nabla\varphi\Vert_{L^2(0,T;L^2(\Omega))}\\&+C(\varepsilon)\Vert u_\gamma\Vert_{L^{\infty}(\Omega\times(0,T))}\Vert \nabla u_\gamma\Vert_{L^2(\Omega\times(0,T))} \Vert \nabla \varphi\Vert_{L^{2}(\Omega\times(0,T))}\\&+
     C(\varepsilon)\Vert u_\gamma\Vert_{L^\infty(\Omega\times (0,T))}\Vert (\nabla \omega_\varepsilon\ast u)_\gamma\Vert_{L^2(\Omega\times(0,T))}\Vert \nabla \varphi\Vert_{L^2(\Omega\times(0,T))}\\&+C\Vert u_\gamma\Vert_{L^{\infty}(\Omega\times(0,T))}\Vert p_H-u^\gamma_\gamma\Vert_{L^2(\Omega\times(0,T))} \Vert  \varphi\Vert_{L^{2}(\Omega\times(0,T))}\\\leq \, & 
     C(\varepsilon,T) \Vert \varphi\Vert_{L^2(0,T;H^1(\Omega))},
     \end{align*}
Therefore, for any $\gamma\geq \overline{\gamma}$, we infer that $\Vert \partial_t u_\gamma\Vert_{L^2(0,T;(H^1(\Omega)^\prime)} \leq C(T)$, thus showing~\eqref{dtu1}.
\\

It remains to prove \eqref{bb}. First note that, clearly, $\partial_t u_\gamma$ and $\partial_t p_\gamma$ share the same sign since $u_\gamma\geq0$ almost everywhere in $\Omega\times(0,T)$. Then we differentiate in time \eqref{nonlocal} and get
\begin{align*}
    \partial_ {tt} u_\gamma-\dfrac {\gamma}{\gamma+1}\Delta \partial_t (u_\gamma^{\gamma+1})-&\frac{1}{2\varepsilon^2}\Delta\partial_t (u_\gamma^2)+\frac{1}{\varepsilon^2}\DIV(\partial_t u_\gamma(\nabla\omega_\varepsilon\ast u_\gamma))+\frac{1}{\varepsilon^2}\text{div}(u_\gamma(\nabla\omega_\varepsilon\ast \partial_t u_\gamma))\\&=\partial_t u_\gamma(p_H-p_\gamma)-u_\gamma\partial_tp_\gamma.
\end{align*}
We test it against $\text{sign}(\partial_t u_\gamma)$ and use Kato's inequality to obtain 
\begin{align*}
    \partial_t \vert \partial_t u_\gamma\vert \leq & \frac{\gamma}{\gamma+1}\Delta(\vert \partial_t(u_\gamma^{\gamma+1})\vert)+\frac{1}{2\varepsilon^2}\Delta(\vert \partial_t (u_\gamma^2)\vert)+\frac{1}{\varepsilon^2}\DIV(\vert \partial_t u_\gamma\vert (-\nabla\omega_\varepsilon\ast u_\gamma))\\&-\frac{1}{\varepsilon^2}\text{div}(u_\gamma(\nabla\omega_\varepsilon\ast \partial_t u_\gamma)) \, \text{sign}(\partial_tu_\gamma)+\vert \partial_tu_\gamma\vert p_H-p_\gamma\vert \partial_t u_\gamma\vert-u_\gamma\vert\partial_t p_\gamma\vert.
\end{align*}
Now we rearrange the terms and integrate in space, deducing
    \begin{align*}
    &\nonumber\frac{\diff}{\diff t}\int_\Omega \vert \partial_t u_\gamma\vert \diff x+\int_\Omega u_\gamma\vert \partial_t p_\gamma\vert \diff x+\int_\Omega p_\gamma\vert \partial_t u_\gamma\vert \diff x\\&\leq -\frac{1}{\varepsilon^2}\int_\Omega\text{div}(u_\gamma(\nabla\omega_\varepsilon\ast \partial_t u_\gamma))\, \text{sign}(\partial_tu_\gamma)\diff x +p_H\int_\Omega \vert \partial_t u_\gamma\vert \diff x.
    \end{align*}
Then we have, by Young's inequality for convolutions,
    \begin{align*}
       -\frac{1}{\varepsilon^2}\int_\Omega & \text{div}(u_\gamma(\nabla\omega_\varepsilon\ast \partial_t u_\gamma)) \, \text{sign}(\partial_tu_\gamma)\diff x\\&=-\frac{1}{\varepsilon^2}\int_\Omega \nabla u_\gamma\cdot (\nabla \omega_\varepsilon\ast \partial_t u_\gamma)\,\text{sign}(\partial_tu_\gamma)\diff x-\frac{1}{\varepsilon^2}\int_\Omega  u_\gamma(\Delta\omega_\varepsilon\ast \partial_t u_\gamma )\,\text{sign}(\partial_tu_\gamma)\diff x\\&\leq \frac{1}{\varepsilon^2}\Vert \nabla u_\gamma\Vert_{L^1(\Omega)}\Vert \nabla \omega_\varepsilon\ast \partial_t u_\gamma\Vert_{L^\infty(\Omega)}+\frac{1}{\varepsilon^2}\Vert u_\gamma\Vert_{L^1(\Omega)}\Vert \Delta\omega_\varepsilon\ast \partial_t u_\gamma\Vert_{L^\infty(\Omega)}\\&\leq C(\varepsilon)(1+ \Vert \nabla u_\gamma\Vert_{L^1(\Omega}))\int_\Omega \vert \partial_t u_\gamma\vert \diff x,
    \end{align*}
since
$$
\Vert \nabla \omega_\varepsilon\ast \partial_t u_\gamma\Vert_{L^\infty(\Omega)}\leq \Vert\nabla \omega_\varepsilon\Vert_{L^\infty(\Omega)}\int_\Omega\vert \partial_tu_\gamma\vert \diff x\leq C(\varepsilon)\int_\Omega\vert \partial_tu_\gamma\vert \diff x,
$$
and the same for $\Vert \Delta\omega_\varepsilon\ast \partial_t u_\gamma\Vert_{L^\infty(\Omega)}$. Therefore we end up with
\begin{align}\label{eq:ddt_time_der_u}
\frac{\diff}{\diff t}\int_\Omega \vert \partial_t u_\gamma\vert \diff x+\int_\Omega u_\gamma\vert \partial_t p_\gamma\vert \diff x+\int_\Omega p_\gamma\vert \partial_t u_\gamma\vert \diff x\leq 
      C(\varepsilon)(1+ \Vert \nabla u_\gamma\Vert_{L^1(\Omega}))\int_\Omega \vert \partial_t u_\gamma\vert \diff x.
    \end{align}
To apply the Gronwall inequality, we need to estimate $\Vert \partial_t u_\gamma(0)\Vert_{L^1(\Omega)}$. From \eqref{nonlocal} we have 
\begin{align*}
\nonumber\Vert \partial_t u_\gamma(0)\Vert_{L^1(\Omega)}\leq\, & C(\varepsilon)\Big(\frac{\gamma}{\gamma+1}\Vert \Delta (u_0^{\gamma+1})\Vert_{L^1(\Omega)}+\Vert \Delta (u_0^2)\Vert_{L^1(\Omega)} +\\&+\Vert \text{div}(u_0(\nabla\omega_\varepsilon\ast u_0)) \Vert_{L^1(\Omega)}+\Vert u_0(p_H-p_0)\Vert_{L^1(\Omega)}\Big).
\end{align*}
Due to Assumption \ref{initial2}, the first term is bounded. Concerning next terms, we have by Assumption \ref{initial1} and \ref{initial2}
$$
\Vert \Delta (u_0^2)\Vert_{L^1(\Omega)}\leq 2\Vert \nabla u_0\Vert_{L^2(\Omega)}^2+2\Vert u_0\Delta u_0\Vert_{L^1(\Omega)}\leq 2\Vert \nabla u_0\Vert_{L^2(\Omega)}^2+2p_H^\frac 1 \gamma \Vert \Delta u_0\Vert_{L^1(\Omega)}\leq C,
$$
\begin{align*}
    \Vert \text{div}(u_0 &(\nabla\omega_\varepsilon\ast u_0)) \Vert_{L^1(\Omega)}\leq \Vert \nabla u_0\cdot (\nabla \omega_\varepsilon\ast u_0)\Vert_{L^1(\Omega)}+\Vert u_0(\Delta \omega_\varepsilon\ast u_0)\Vert_{L^1(\Omega)}
    \\
    &\leq \Vert \nabla u_0\Vert_{L^1(\Omega)}\Vert \nabla \omega_\varepsilon\Vert_{L^\infty(\Omega)}\Vert u_0\Vert_{L^1(\Omega)}+p_H^{\frac 1 \gamma}\vert\Omega\vert\Vert \Delta \omega_\varepsilon\Vert_{L^\infty(\Omega)}\Vert u_0\Vert_{L^1(\Omega)}\leq C(\varepsilon).
\end{align*}
%
It follows that $\nonumber\Vert \partial_t u_\gamma(0)\Vert_{L^1(\Omega)} \leq C(\varepsilon)$. Since $u_\gamma\in L^2(0;T;H^1(\Omega))$ uniformly in $\gamma$ (Lemma \ref{exist}), we may apply the Gronwall Lemma in~\eqref{eq:ddt_time_der_u} and obtain
\begin{align}
\Vert\partial_t u_\gamma\Vert_{L^\infty(0,T;L^1(\Omega))}+\Vert u_\gamma\partial_t p_\gamma\Vert_{L^1(0,T;L^1(\Omega))}\leq C(T,\varepsilon),
\label{a1}
\end{align}
with $C=C(T,\varepsilon)>0$ independent of $\gamma$. To conclude the argument we notice that 
\begin{align*}
    \int_\Omega \vert \partial_t p_\gamma\vert \diff x\leq \int_{u_\gamma\leq \frac{1}{2}}\vert \partial_t p_\gamma\vert \diff x+ 2\int_{u_\gamma>\frac1 2 }u_\gamma\vert \partial_t p_\gamma\vert \diff x\leq \frac{\gamma}{2^{\gamma-1}}\int_\Omega\vert \partial_tu_\gamma \vert \diff x+ 2\int_{\Omega}u_\gamma\vert \partial_t p_\gamma\vert \diff x,
\end{align*}
so that, being $\frac{\gamma}{2^{\gamma-1}}\leq 1$ for any $\gamma\geq1$, from \eqref{a1} we deduce the second estimate in~\eqref{bb} 
and conclude the proof of Lemma~\ref{ests}.
\end{proof}

\subsection{The limit $\gamma\to \infty$}
We complete the convergence as $\gamma\to \infty$ distinguishing two steps.
\paragraph{Step 1. Consequences of Lemmas \ref{exist}, \ref{DeG} and \ref{ests}.}
 Exploiting those lemmas, we can obtain the following convergences, up to subsequences, which are deduced by standard compactness arguments: for any $T>0$, as $\gamma\to \infty$,
\begin{align}
\label{convb}
&u_\gamma\to u_\infty \text{ almost everywhere in }\Omega\times(0,T),\\&
u_\gamma\overset{\ast} {\rightharpoonup}u_\infty\text{ in }L^\infty(\Omega\times(0,T)),\\&
u_\gamma\to u_\infty \text{ in }L^p(\Omega\times(0,T))\quad \forall p\in[1,+\infty),\label{pp}\\&
u_\gamma\rightharpoonup u_\infty \text{ in }L^2(0,T;H^1(\Omega)),\\&
\partial_t u_\gamma\rightharpoonup \partial_t u_\infty \text{ in }L^2(0,T;(H^1(\Omega))'),
\\&
p_\gamma\rightharpoonup p_\infty \text{ in }L^2(0,T;H^1(\Omega)), \;  L^3((0,T)\times \Omega).
\label{lastconv}
\end{align}
Moreover, by the Aubin-Lions-Simon Lemma,
\begin{align}
p_\gamma\to p_\infty \text{ in }L^2(0,T;L^2(\Omega)),
    \label{pstrong}
\end{align}
which, thanks to \eqref{est_p}  can be improved by interpolation, to
\begin{align}
p_\gamma\to p_\infty \text{ in }L^q(0,T;L^q(\Omega)),\quad \forall q\in[2,3).
\label{L3conv}
\end{align}
In order to obtain the complementarity condition, we study the function $v_\gamma:=u_\gamma^{\gamma+1}=u_\gamma p_\gamma$. First we have 
$$
\Vert v_\gamma\Vert _{L^2(\Omega\times(0,T))}\leq \Vert u_\gamma\Vert_{L^\infty(\Omega\times(0,T))}\Vert p_\gamma\Vert_{L^2(\Omega\times(0,T))}\leq C(T,\varepsilon),
$$
and
$$
\Vert \nabla v_\gamma\Vert _{L^2(\Omega\times(0,T))}=\frac{\gamma+1}{\gamma}\Vert u_\gamma\nabla p_\gamma\Vert _{L^2(\Omega\times(0,T))}\leq \frac{\gamma+1}{\gamma}\Vert u_\gamma\Vert_{L^\infty(\Omega\times(0,T))}\Vert \nabla p_\gamma\Vert_{L^2(\Omega\times(0,T))}\leq C(T,\varepsilon),
$$
since $\frac{\gamma+1}{\gamma}\to 1$ as $\gamma\to \infty$.
Therefore,  up to subsequences, we have 
$$
v_\gamma=u_\gamma p_\gamma\rightharpoonup v_\infty \qquad \text{in } \; L^2(0,T;H^1(\Omega)),
$$
for some $v_\infty\in L^2(0,T;H^1(\Omega))$. To identify $v_\infty$, we observe that
\begin{align*}
\Vert u_\gamma &p_\gamma-u_\infty p_\infty\Vert_{L^2(\Omega\times (0,T))}\leq \Vert u_\gamma(p_\gamma-p_\infty)\Vert_{L^2(\Omega\times (0,T))}+\Vert p_\infty(u_\gamma-u_\infty) \Vert_{L^2(\Omega\times (0,T))}\\& 
\leq \Vert u_\gamma\Vert_{L^\infty(\Omega\times (0,T))}\Vert p_\gamma-p_\infty\Vert_{L^2(\Omega\times (0,T))}+\Vert u_\gamma-u_\infty\Vert_{L^6(\Omega\times (0,T))}\Vert p_\infty\Vert_{L^3(\Omega\times (0,T))}\to 0
\end{align*}
as $\gamma\to \infty$, thanks to the above results, in particular \eqref{Linfty}, \eqref{est_p}, \eqref{pp} and \eqref{pstrong}.
From this we clearly identify $v_\infty=u_\infty p_\infty$ and obtain 
\begin{align}
&v_\gamma=u_\gamma p_\gamma\rightharpoonup v_\infty=u_\infty p_\infty \text{ in } L^2(0,T;H^1(\Omega)).
\label{ppp}\\&
v_\gamma\to p_\infty u_\infty\text{ in }L^2(\Omega\times(0,T)). \label{eq:strong_conv_v_gamma} 
\end{align}
We are now able to pass to the limit in $\gamma$ to obtain \eqref{incom} and 
\begin{align}
p_\infty(1-u_\infty)=0,\quad 0\leq u_\infty\leq 1 ,\quad p_\infty\geq 0.
\label{compa}
\end{align}
Indeed, we can argue as in \cite{MR1687274} to see that, for any $\epsilon>0$ there exists $\gamma_0=\gamma_0(\epsilon)$ such that 
for any $y\geq0$ and $\gamma\geq \gamma_0$
$$
y^{\gamma+1}\geq y^\gamma-\epsilon.
$$
Applying this to $y=u_\gamma$, since we have that, up to subsequences, $u^\gamma=p_\gamma\to p_\infty$ and $v_\gamma=u_\gamma^{\gamma+1}\to p_\infty u_\infty$ almost everywhere in $\Omega\times(0,T)$, we can pass to the limit and obtain, 
$$
u_\infty p_\infty\geq p_\infty-\epsilon, \qquad \forall \epsilon. >0, 
$$
which implies that $p_\infty u_\infty\geq p_\infty$. Since by \eqref{Linfty} and \eqref{convb} we get $0\leq u_\infty\leq 1$ almost everywhere in $\Omega\times(0,T)$, we get $p_\infty\geq p_\infty u_\infty\geq p_\infty$, i.e., \eqref{compa}, since it holds for any $T>0$. In order to pass to Step 2, we introduce the quantities 
\begin{equation}\label{eq:v_gamma_inf_tilde}
\widetilde{v}_\gamma:= \frac{\gamma}{\gamma+1}v_\gamma+\frac 1 {2\varepsilon^2}u_\gamma^2, \qquad  \widetilde{v}_\infty = v_\infty + \frac 1 {2\varepsilon^2}u_\infty^2,
\end{equation}
 and study $\nabla \widetilde{v}_\gamma$ which is essential to obtain the complementarity condition~\eqref{eq:incompressible_pressure}.
\paragraph{Step 2. Strong convergence of $\nabla \widetilde{v}_\gamma$.}
\begin{lemma}
Let $\widetilde{v}_\gamma$, $\widetilde{v}_\infty$ be as in \eqref{eq:v_gamma_inf_tilde}. Then, for any $T>0$,
\label{compactness}
$$
\widetilde{v}_\gamma \to \widetilde{v}_\infty \quad \text{in }L^2(0,T;H^1(\Omega))\quad\text{ as }\gamma\to \infty.
$$
\end{lemma}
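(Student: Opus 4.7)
The plan is to upgrade the weak convergence $\nabla\widetilde{v}_\gamma\rightharpoonup\nabla\widetilde{v}_\infty$ in $L^2((0,T)\times\Omega)$ (from Step~1) to strong convergence by proving that
\begin{equation*}
\int_0^T\!\!\int_\Omega|\nabla\widetilde{v}_\gamma|^2\,dx\,dt\;\longrightarrow\;\int_0^T\!\!\int_\Omega|\nabla\widetilde{v}_\infty|^2\,dx\,dt.
\end{equation*}
Combined with~\eqref{eq:strong_conv_v_gamma}, this gives the claimed $L^2(0,T;H^1(\Omega))$ convergence by the standard Hilbert-space criterion.

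To exploit the PDE, I would rewrite~\eqref{nonlocal} in the form $\partial_t u_\gamma-\Delta\widetilde{v}_\gamma+\varepsilon^{-2}\mathrm{div}(u_\gamma(\nabla\omega_\varepsilon\ast u_\gamma))=u_\gamma G(p_\gamma)$ and test it against $\widetilde{v}_\gamma\in L^2(0,T;H^1(\Omega))$. Using the chain rule $\langle\partial_t u_\gamma,\widetilde{v}_\gamma\rangle=\tfrac{d}{dt}\int_\Omega H_\gamma(u_\gamma)\,dx$ with $H_\gamma(u):=\frac{\gamma\,u^{\gamma+2}}{(\gamma+1)(\gamma+2)}+\frac{u^3}{6\varepsilon^2}$, this yields the identity
\begin{equation*}
\int_0^T\!\!\int_\Omega|\nabla\widetilde{v}_\gamma|^2=\int_\Omega\!\bigl[H_\gamma(u_0)-H_\gamma(u_\gamma(T))\bigr]\,dx+\int_0^T\!\!\int_\Omega\tfrac{u_\gamma(\nabla\omega_\varepsilon\ast u_\gamma)}{\varepsilon^2}\!\cdot\!\nabla\widetilde{v}_\gamma+\int_0^T\!\!\int_\Omega u_\gamma G(p_\gamma)\widetilde{v}_\gamma.
\end{equation*}
The convolution and source terms will pass to the limit by strong-weak pairings based on~\eqref{Linfty} and the $L^q$ convergences of $u_\gamma,p_\gamma$ from Step~1; they match the analogous expressions for $(u_\infty,p_\infty,\widetilde{v}_\infty)$. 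The initial piece satisfies $\int_\Omega H_\gamma(u_0)\to\int_\Omega u_0^3/(6\varepsilon^2)$ because $u_0\leq p_H^{1/\gamma}$ keeps $u_0^{\gamma+2}$ bounded while $\gamma/[(\gamma+1)(\gamma+2)]\to 0$.

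The delicate final piece I would treat as follows: since $H_\gamma(u)\geq u^3/(6\varepsilon^2)$ for $u\geq 0$, one has $\int_\Omega H_\gamma(u_\gamma(T))\geq\int_\Omega u_\gamma(T)^3/(6\varepsilon^2)$; Aubin--Lions applied with the $\partial_t$-bound~\eqref{dtu1} gives $u_\gamma\to u_\infty$ in $C([0,T];(H^1(\Omega))^\prime)$, hence $u_\gamma(T)\rightharpoonup u_\infty(T)$ weakly in every $L^p(\Omega)$, $p<\infty$, using also the $L^\infty$ bound~\eqref{Linfty}, and weak lower semicontinuity of $\|\cdot\|_{L^3}^3$ produces $\liminf_\gamma\int_\Omega u_\gamma(T)^3\geq\int_\Omega u_\infty(T)^3$. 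Taking $\limsup$ in the identity and identifying the right-hand side with $\int_0^T\!\!\int_\Omega|\nabla\widetilde{v}_\infty|^2$ by testing the limit PDE~\eqref{incom} against $\widetilde{v}_\infty=p_\infty+u_\infty^2/(2\varepsilon^2)$ (using~\eqref{compa} so that $u_\infty p_\infty=p_\infty$, the chain rule on the $u_\infty^2/(2\varepsilon^2)$ part, and the orthogonality $\int_0^T\langle\partial_t u_\infty,p_\infty\rangle\,dt=0$), one gets $\limsup_\gamma\int_0^T\!\!\int_\Omega|\nabla\widetilde{v}_\gamma|^2\leq\int_0^T\!\!\int_\Omega|\nabla\widetilde{v}_\infty|^2$, which together with the weak lower-semicontinuity bound $\liminf\geq\int_0^T\!\!\int_\Omega|\nabla\widetilde{v}_\infty|^2$ forces the required norm convergence. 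The hard part will be the sharp pointwise-in-time treatment of $\int_\Omega H_\gamma(u_\gamma(T))$ via Aubin--Lions and weak lsc, together with the justification of $\langle\partial_t u_\infty,p_\infty\rangle=0$; this orthogonality follows from~\eqref{compa} by a Stampacchia-type argument, namely $\partial_t p_\infty$ vanishes a.e.\ on $\{p_\infty=0\}$ while $1-u_\infty$ vanishes on $\{p_\infty>0\}$.
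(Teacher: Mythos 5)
Your overall strategy is sound and is a recognizable variant of the paper's: both arguments upgrade the weak convergence $\nabla\widetilde{v}_\gamma\rightharpoonup\nabla\widetilde{v}_\infty$ to strong convergence by exploiting an energy identity obtained from testing \eqref{eq} with a function built from $\widetilde{v}_\gamma$. The paper tests against the \emph{difference} $\widetilde{v}_\gamma-\widetilde{v}_\infty$, which lets it discard the final-time contributions by the sign observation \eqref{eq:sign_obs_exp_derivative} and reduces everything to showing that a list of remainder terms (plus $\int_0^T\langle\partial_t u_\infty,v_\infty\rangle\,\diff t$) vanishes; you test against $\widetilde{v}_\gamma$ itself and instead handle the final-time term $\int_\Omega H_\gamma(u_\gamma(T))\diff x$ via Aubin--Lions--Simon convergence in $C([0,T];(H^1(\Omega))')$ and weak lower semicontinuity of the convex functional $u\mapsto\int_\Omega u^3$, then identify the limit of the right-hand side by testing the limit equation against $\widetilde{v}_\infty$. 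Both routes are workable, and your treatment of the convolution, source, and final-time terms is essentially fine (modulo the chain rules $\langle\partial_t u_\gamma,\widetilde{v}_\gamma\rangle=\frac{\diff}{\diff t}\int_\Omega H_\gamma(u_\gamma)\diff x$ and $\langle\partial_t u_\infty,u_\infty^2\rangle=\frac13\frac{\diff}{\diff t}\int_\Omega u_\infty^3\diff x$, which, as in the paper, require an approximation argument but are standard).

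The genuine gap is your justification of $\int_0^T\langle\partial_t u_\infty,p_\infty\rangle\,\diff t=0$, which both proofs unavoidably need. Your ``Stampacchia-type argument'' does not work as stated: $\partial_t u_\infty$ is only an element of $L^2(0,T;(H^1(\Omega))')$, not an a.e.-defined function, so the statement that it ``vanishes a.e.\ on $\{p_\infty=0\}$'' (and likewise the appeal to $\partial_t p_\infty$, which is a different object and for which no bound beyond $L^1$ is available) has no meaning; one cannot localize a functional in $(H^1)'$ to the level sets of $p_\infty$ and pair it there. The correct mechanism, used in the paper, is the Br\'ezis-type chain rule of Theorem \ref{Har}: taking $h=I_S$ the indicator function of $S=(-\infty,1]$, the constraint \eqref{compa} says precisely that $p_\infty(x,t)\in\partial I_S(u_\infty(x,t))$ a.e., and since $u_\infty,p_\infty\in L^2(0,T;H^1(\Omega))$ and $\partial_t u_\infty\in L^2(0,T;(H^1(\Omega))')$, the map $t\mapsto\int_\Omega I_S(u_\infty(\cdot,t))\diff x$ is absolutely continuous with derivative $\langle\partial_t u_\infty,p_\infty\rangle$; as this map is identically zero, the pairing integrates to zero. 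Without this (or an equivalent rigorous substitute), the key cancellation in your final $\limsup$ estimate is unjustified, so you should replace the Stampacchia step by an appeal to Theorem \ref{Har}.
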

\begin{proof}
Using \eqref{pp} and \eqref{eq:strong_conv_v_gamma} we obtain $\widetilde{v}_\gamma \to \widetilde{v}_\infty$ in $L^2(\Omega\times (0,T))$ so it is sufficient to prove $\nabla \widetilde{v}_\gamma \to \nabla \widetilde{v}_\infty$ in $L^2(\Omega\times (0,T))$. Of course, by \eqref{convb}-\eqref{lastconv} and \eqref{ppp}, we have weak convergence 
\begin{align}
\label{weak}
\nabla \widetilde{v}_\gamma \rightharpoonup \nabla \widetilde{v}_\infty \,  \text{in }L^2(0,T;L^2(\Omega)),\quad\gamma\to \infty.
\end{align} Let us first observe that \eqref{nonlocal} can be rewritten highlighting the presence of $\widetilde{v}_\gamma$: 
\begin{align}
\partial_t u_\gamma	 - \Delta \widetilde{v}_\gamma =u_\gamma	G(p_\gamma	)-\frac 1 {\varepsilon^2}\text{div}(u_\gamma	(\nabla \omega_\varepsilon\ast u_\gamma	)).
\label{eq}
\end{align}
We multiply \eqref{eq} by $\widetilde{v}_\gamma	-\widetilde{v}_\infty$ and integrate over $\Omega_T:=\Omega\times(0,T)$. Since
$$
\int_\Omega\partial_t u_\gamma	 v_\gamma	\diff x=\frac{1}{\gamma+2}\frac{\diff}{\diff t}\int_\Omega u_\gamma	^{\gamma+2}\diff x.
$$
we obtain
\begin{align}
& \nonumber\frac{\gamma}{(\gamma+2)(\gamma+1)}\int_\Omega u_\gamma	^{\gamma+2}(T)\diff x +\int_{\Omega_T} \nabla \widetilde{v}_\gamma	\cdot \nabla(\widetilde{v}_\gamma	-\widetilde{v}_\infty)\diff x \diff s\\&=\nonumber\frac{\gamma}{(\gamma+2)(\gamma+1)}\int_\Omega u_\gamma	^{\gamma+2}(0)\diff x-\frac 1 {2\varepsilon^2} \int_{\Omega_T}\partial_t u_\gamma	( u_\gamma	^2-u_\infty^2)\diff x\diff t+\int_{\Omega_T} u_\gamma	G(p_\gamma	)(\widetilde{v}_\gamma	-\widetilde{v}_\infty)\diff x \diff t\\&+\frac 1 {\varepsilon^2}\int_{\Omega_T}  u_\gamma	(\nabla\omega_\varepsilon\ast u_\gamma)\cdot \nabla(\widetilde{v}_\gamma	-\widetilde{v}_\infty)\diff x \diff t+\int_0^{T}\langle\partial_tu_\gamma	,v_\infty\rangle \diff t.
\label{p11}
\end{align}
The plan is to estimate $\limsup_{\gamma \to \infty }\int_{\Omega_T}|\nabla \widetilde{v}_\gamma-\nabla \widetilde{v}_\infty|^2 \diff x \diff t$ from \eqref{p11}. First, we rewrite the term $\frac 1 {2\varepsilon^2} \int_{\Omega_T}\partial_t u_\gamma	( u_\gamma	^2-u_\infty^2)\diff x \diff t$. We have 
\begin{align*}
&\frac 1 {2\varepsilon^2}\int_{\Omega}\partial_t u_\gamma	( u^2_\gamma	-u_\infty^2)\diff x=
\frac 1 {2\varepsilon^2} \langle\partial_t u_\gamma	-\partial_t u_\infty, u_\gamma	^2-u_\infty^2\rangle 
+\frac 1 {2\varepsilon^2} \langle\partial_t u_\infty,u^2_\gamma	-u_\infty^2\rangle \\& 
=\frac 1  {4\varepsilon^2}\frac \diff {\diff t}\int_{\Omega}(u_\gamma	- u_\infty)^2(u_\gamma	+u_\infty)\diff x-\frac 1 {4\varepsilon^2}\langle\partial_t(u_\gamma	+u_\infty),(u_\gamma	-u_\infty)^2\rangle +\frac 1 {2\varepsilon^2}\langle\partial_t u_\infty, u_\gamma	^2-u_\infty^2 \rangle
\\&
=\frac 1 {4\varepsilon^2}\frac \diff {\diff t}\int_{\Omega}(u_\gamma	- u_\infty)^2(u_\gamma	+u_\infty)\diff x-\frac 1 {12\varepsilon^2}\frac \diff {\diff t}\int_{\Omega}(u_\gamma	-u_\infty)^3\diff x -\frac 1 {2\varepsilon^2}\langle\partial_t u_\infty, (u_\gamma	-u_\infty)^2 \rangle \\&
\phantom{ = } +\frac 1 {2\varepsilon^2} \langle\partial_t u_\infty, u^2_\gamma	-u_\infty^2 \rangle,
\end{align*}
so that, integrating over $[0,T]$ and recalling that $u_\infty(0)\equiv u_\gamma(0)\equiv u_0$, we get 
\begin{align}\label{eq:expression_for_derivative_test_vgammatilde}
&\nonumber -\frac 1 {2\varepsilon^2}\int_{\Omega_T}\partial_t u_\gamma	( u_\gamma	^2-u_\infty^2)\diff x \diff t =-\frac 1 {4\varepsilon^2} \int_{\Omega}(u_\gamma	(T)- u_\infty(T))^2(u_\gamma	(T)+u_\infty(T))\diff x\\&+\frac{1}{12\varepsilon^2}\int_{\Omega}(u_\gamma	(T)-u_\infty(T))^3\diff x
+\frac 1 {2\varepsilon^2}\int_0^{T}\langle \partial_t u_\infty, (u_\gamma	-u_\infty)^2\rangle \diff t -\frac 1 {2\varepsilon^2} \int_0^{T}\langle \partial_t u_\infty, u^2_\gamma	-u_\infty^2\rangle \diff t.
\end{align}
Note that 
\begin{equation}\label{eq:sign_obs_exp_derivative}
\begin{split}
\frac 1 {4\varepsilon^2} \int_{\Omega} & (u_\gamma	(T)- u_\infty(T))^2(u_\gamma	(T)+u_\infty(T))\diff x-\frac 1 {12\varepsilon^2}\int_{\Omega}(u_\gamma	(T)-u_\infty(T))^3\diff x\\&=\frac 1 {4\varepsilon^2} \int_\Omega(u_\gamma	(T)- u_\infty(T))^2\left(u_\gamma	(T)+u_\infty(T)-\frac 1 3 u_\gamma	(T)+\frac 1 3 u_\infty(T) \right)\diff x \geq 0.
\end{split}
\end{equation}
Therefore, taking into account \eqref{eq:expression_for_derivative_test_vgammatilde}, \eqref{eq:sign_obs_exp_derivative} and $u_{\gamma} \geq 0$ we obtain from \eqref{p11} 
\begin{align}
\nonumber\int_{\Omega_T} \nabla \widetilde{v}_\gamma	\cdot & \nabla(\widetilde{v}-\widetilde{v}_\infty)\diff x \diff s \leq \frac{\gamma}{(\gamma+2)(\gamma+1)}\int_\Omega u_\gamma	^{\gamma+2}(0)\diff x+ \frac 1 {2\varepsilon^2}\int_0^{T}\langle\partial_t u_\infty, (u_\gamma	-u_\infty)^2\rangle \diff t\\&-\int_0^{T}\langle\partial_t u_\infty,\frac 1 {2\varepsilon^2} u_\gamma	^2-\frac 1 {2\varepsilon^2}u_\infty^2 \rangle \diff t+\int_{\Omega_T} u_\gamma	G(p_\gamma)(\widetilde{v}_\gamma	-\widetilde{v}_\infty)\diff x \diff t\nonumber
\\
&+\frac 1 {\varepsilon^2}\int_{\Omega_T}  u_\gamma	(\nabla\omega_\varepsilon\ast u_\gamma	)\cdot \nabla(\widetilde{v}_\gamma	-\widetilde{v}_\infty)\diff x \diff t+\int_0^{T}\langle\partial_tu_\gamma	,v_\infty \rangle \diff t.
\label{p13}
\end{align}
Observe that 
\begin{align*}
\int_{\Omega_T} \nabla \widetilde{v}_\gamma	\cdot \nabla(\widetilde{v}_\gamma	-\widetilde{v}_\infty)\diff x \diff s=\int_{\Omega_T} \vert\nabla(\widetilde{v}_\gamma	-\widetilde{v}_\infty)\vert^2\diff x \diff s+\int_{\Omega_T} \nabla \widetilde{v}_\infty\cdot \nabla(\widetilde{v}_\gamma	-\widetilde{v}_\infty)\diff x \diff s,
\end{align*}
but by the weak convergence in \eqref{weak} we have $$
\int_{\Omega_T} \nabla \widetilde{v}_\infty\cdot \nabla(\widetilde{v}_\gamma	-\widetilde{v}_\infty)\diff x \diff s\to 0,
$$
since $\nabla\widetilde{v}_\infty\in L^2(0,T;L^2(\Omega))$. Thus we deduce that 
\begin{align*}
\limsup_{\gamma\to \infty}\int_{\Omega_T} \vert\nabla(\widetilde{v}_\gamma	-\widetilde{v}_\infty)\vert^2\diff x \diff s=\limsup_{\gamma\to \infty}\int_{\Omega_T} \nabla \widetilde{v}_\gamma	\cdot \nabla(\widetilde{v}_\gamma	-\widetilde{v}_\infty)\diff x \diff s.
\end{align*}
Then, recalling also that $\partial_t u_\gamma	 \rightharpoonup \partial_t u_\infty$ in $L^2(0,T;(H^1(\Omega))')$ and that $v_\infty\in L^2(0,T;H^1(\Omega))$, we deduce from \eqref{p13}
\begin{align}
&\nonumber\limsup_{\gamma\to \infty}\int_{\Omega_T} \vert\nabla(\widetilde{v}_\gamma-\widetilde{v}_\infty)\vert^2\diff x \diff s \leq \limsup_{\gamma\to \infty}\left(\frac{\gamma}{(\gamma+2)(\gamma+1)}\int_\Omega u_\gamma^{\gamma+2}(0)\diff x\right. \\
&\nonumber+\frac 1 {2\varepsilon^2}\int_0^{T}\langle\partial_t u_\infty, (u_{\gamma}-u_\infty)^2\rangle \diff t \left.-\frac 1 {2\varepsilon^2} \int_0^{T}\langle\partial_t u_\infty,  u_\gamma^2-u_\infty^2 \rangle \diff t+\int_{\Omega_T} u_\gamma G(p_\gamma)(\widetilde{v}_\gamma-\widetilde{v}_\infty)\diff x \diff t\right.\\&\left.+\frac 1 {\varepsilon^2}\int_{\Omega_T}  u_\gamma\nabla\omega_\varepsilon\ast u_\gamma\cdot\nabla(\widetilde{v}_\gamma-\widetilde{v}_\infty)\diff x \diff t\right)+\int_0^{T}\langle\partial_tu_\infty,v_\infty \rangle \diff t.
\label{ppp1}
\end{align}
The plan is to prove that all the terms on the (RHS) of \eqref{ppp1} converge to 0. First, we have that $0\leq u_\gamma(0)=u_0\leq p_H^\frac 1 \gamma$ for any $\gamma>0$ by assumption, so that in the end we deduce $u_\gamma^{\gamma+1}(0)\leq p_H^{\frac{\gamma+2}{\gamma}}\leq C$ and thus, as $\gamma \to \infty$,
$$
\frac{\gamma}{(\gamma+2)(\gamma+1)}\int_\Omega u_\gamma^{\gamma+2}(0)\diff x\to 0.
$$
Then, using \eqref{pp} and the fact that $\partial_t u_\infty\in L^2(0,T;(H^1(\Omega))^\prime)$, we immediately deduce 
$$
\frac 1 {2\varepsilon^2}\int_0^{T}\langle\partial_t u_\infty, (u_\gamma-u_\infty)^2 \rangle \diff t-\frac 1 {2\varepsilon^2} \int_0^{T}\langle\partial_t u_\infty,  u_\gamma^2-u_\infty^2 \rangle \diff t\to 0\quad \text{ as }\gamma\to \infty.
$$
Concerning the term $\int_{\Omega_T} u_\gamma G(p_\gamma)(\widetilde{v}_\gamma-\widetilde{v}_\infty)\diff x \diff t$, we simply use the fact that $\{u_{\gamma}\}$ is bounded in $L^{\infty}(\Omega\times (0,T))$ (cf. \eqref{Linfty}), $\{G(p_{\gamma})\}$ is bounded in $L^2(\Omega\times (0,T))$ (cf. \eqref{est_p}) and $\widetilde{v}_\gamma \to \widetilde{v}_\infty$ strongly in $L^2(\Omega\times (0,T))$. Similarly, $\int_{\Omega_T} u_\gamma\nabla\omega_\varepsilon\ast u_\gamma\cdot\nabla(\widetilde{v}_\gamma-\widetilde{v}_\infty) \diff x \diff t \to 0$ because of weak convergence \eqref{weak} and strong convergence $u_\gamma\nabla\omega_\varepsilon\ast u_\gamma \to u_{\infty} \nabla\omega_\varepsilon\ast u_\infty$ in $L^2(\Omega\times (0,T))$ (which follows by \eqref{pp} and simple properties of convolutions).\\

We are left with the analysis of the last term, i.e., $\int_0^{T}\langle\partial_t u_\infty, v_\infty \rangle \diff t$. Our aim is to show that this term vanishes, exploiting Theorem~\ref{Har}. We introduce the following indicator function on $\R$:
\begin{align}
I_S(s):=\begin{cases}
0\quad \text{ if }s\leq 1,\\
+\infty\text{ if }s>1,
\end{cases}
\end{align}
 and define $S=(-\infty,1]$, which is a closed, convex and nonempty set, so that $I_S: \R\to (-\infty,+\infty]$ is proper, convex and lower semicontinuous (see, e.g., \cite[Appendix 1]{MR2183776}) and it holds
$$
\partial I_S(x)=\{y\in \R: y\cdot(x-s)\geq0,\quad \forall s\leq 1\}.
$$
We see that we have

\begin{itemize}
	\item $u_\infty\in L^2(0,T;H^1(\Omega))$ and $\partial_t u_\infty\in L^2(0,T; (H^1(\Omega))')$;
	\item  for almost any $(x,t)\in\Omega_T$, 
	$$
	v_\infty(x,t)\in \partial I_S(u_\infty(x,t)),
	$$
	by \eqref{compa} and being $v_\infty=p_\infty$. Indeed,
	$$
	v_\infty(x,t)(u_\infty(x,t)-s)\geq 0,\quad \forall s\leq 1,
	$$
	since, when $u_\infty(x,t)=1$, being $v_\infty(x,t)\geq0$, the inequality is always verified for any $s\leq1$, whereas, when $u_\infty(x,t)<1$, it holds $v_\infty(x,t)=0$ and thus the inequality is verified for any $s\leq1$ as well.
	\item $v_\infty\in L^2(0,T;H^1(\Omega))$.
\end{itemize}
Therefore, all the assumptions are verified and we can apply Theorem \ref{Har} with $h=I_S$, $f=u_\infty$, $g=v_\infty$, to infer, after an integration over $[0,T]$, 
\begin{align}
    0=\int_\Omega I_S(u_\infty(x,T))\diff x-\int_\Omega I_S(u_\infty(x,0))\diff x=\int_0^{T}\langle\partial_t u_\infty,v_\infty \rangle \diff t,
    \label{AC}
\end{align}
by the definition of $I_S$. 
Indeed, it holds $\int_\Omega I_S(u_\infty(x,\cdot))\diff x\equiv 0$ on $[0,T]$ for any $T>0$. To see this, first notice that, being $u_\infty\leq 1$ almost everywhere in $\Omega\times[0,\infty)$, we deduce that, for almost any $t\in[0,\infty)$,
$$
u_\infty(x,t)\leq 1\quad\text{ for almost any }x\in\Omega.
$$ 
Therefore, $\int_\Omega I_S(u_\infty(x,\cdot))\diff x=0$, for almost any $t\in[0,T]$ and for any $T>0$. Recall now by Theorem \ref{Har} that $\int_\Omega I_S(u_\infty(x,\cdot))\diff x\in AC([0,T])$ for any $T>0$, which ensures that $\int_\Omega I_S(u_\infty(x,t))\diff x\equiv 0$ for \textit{any} $t\in[0,T]$ and for any $T>0$.\\ 

Having studied all the terms in the right-hand side of \eqref{ppp1}, in the end we conclude that 
$$
\limsup_{\gamma\to \infty}\int_{\Omega_T} \vert\nabla(\widetilde{v}_\gamma-\widetilde{v}_\infty)\vert^2\diff x \diff s\leq0,
$$
implying that 
$$
\lim_{\gamma\to \infty}\int_{\Omega_T} \vert\nabla(\widetilde{v}_\gamma-\widetilde{v}_\infty)\vert^2\diff x \diff s=0.
$$
\end{proof} 
\paragraph{Step 3. The complementarity condition.}
Now that we have all the necessary convergences, let us consider the following equation in distributional sense (this equation comes from multiplying \eqref{nonlocal} by $v_\gamma=u_\gamma^{\gamma+1}$):
\begin{align*}
\frac{1}{\gamma+2}\partial_t u^{\gamma+2}_\gamma-v_\gamma\Delta \widetilde{v}_\gamma+\frac 1 {\varepsilon^2}v_\gamma\text{div}(u_\gamma(\nabla\omega_\varepsilon\ast u_\gamma))=u_\gamma G(p_\gamma)v_\gamma.
\end{align*}
Thanks to the results of Step 2. and Lemma \ref{compactness}, we can then pass the limit as $\gamma\to \infty$ and obtain the complementarity condition:
\begin{align}
\label{compat}
v_\infty(\Delta \widetilde{v}_\infty-\frac 1 {\varepsilon^2}\text{div}(u_\infty(\nabla\omega\ast u_\infty))+u_\infty G(p_\infty))=0\quad \text{ in }\mathcal{D}'(\Omega\times(0,T)),
\end{align}
for any $T>0$, so that in the end, recalling $v_\infty=p_\infty$ and $\widetilde{v}_\infty=p_\infty+\frac 1 {2\varepsilon^2}u_\infty^2$,
\begin{align}
p_\infty(\Delta {p}_\infty+\frac 1 {2\varepsilon^2} \Delta u_\infty^2-\frac 1 {\varepsilon^2}\text{div}(u_\infty(\nabla\omega_\varepsilon\ast u_\infty))+u_\infty G(p_\infty))=0\quad \text{ in }\mathcal{D}'(\Omega\times(0,\infty)),
\end{align}
which is the complementarity condition \eqref{eq:incompressible_pressure}. In conclusion, since $v_\infty=p_\infty$, we can repeat the argument leading to \eqref{AC}, to infer that, for any $t\in[0,\infty)$,
$$
\int_0^t \langle\partial_t u_\infty, p_\infty \rangle \diff s=0, \qquad \langle\partial_t u_\infty(t), p_\infty(t) \rangle =0,
$$
thus concluding the proof of Theorem~\ref{thm:incompressible}.

\section{Convergence to equilibria: proof of Theorem~\ref{thm:conv_stationary}}
\label{section::equil}



Here, we prove Theorem \ref{thm:conv_stationary}. Numerical simulations illustrating the result in dimension $1$ with a source term are depicted in Figure~\ref{fig:asymptotics_source}. 


\subsection{Case $G(p)=p_{H}-p$}

\textit{Proof of Theorem \ref{thm:conv_stationary}.} We now divide the proof of Theorem \ref{thm:conv_stationary} in different steps.
\\

\textbf{Step 1: characterization of possible limits.} We fix the initial datum $u_0 \ne 0$ as in the statement of Theorem~\ref{thm:conv_stationary}. 
We show the following key result: 

 \begin{lemma}
From any divergent sequence $\{t_n\}_n\subset \R^+$ we can extract a subsequence (not relabeled) such that,  as $n\to\infty$,  $u_{n}(t):= u(t+t_{n})$ converges to the same limit $ u_{\ast}$ strongly in $L^{p}_{t,x}$
for all $1\le p<2\gamma+1$. Moreover, either  $u_{\ast}\equiv0$ and $\Phi(u(t))\to |\Omega|$ as $t\to\infty$, otherwise  $u_{\ast}\equiv p_H^{1/\gamma}$ and $\Phi(u(t))\to 0$ as $t\to\infty$.

\label{ll}  \end{lemma}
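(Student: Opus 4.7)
My plan is to combine the uniform-in-time estimates already at our disposal with a time-shift compactness argument, and then to identify the limit using the entropy dissipation and the monotonicity of $\Phi$.

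First, I would translate the estimates to the shifted sequence $u_n(t)=u(t+t_n)$. Proposition~\ref{prop:unif_energy_time} and inequality~\eqref{ent} yield bounds for $u_n$ in $L^\infty(0,T;L^{\gamma+1}(\Omega))\cap L^2(0,T;H^1(\Omega))\cap L^{2\gamma+1}(\Omega\times(0,T))$ that are independent of $n$ (and of $T$), together with the bound on $\partial_t u_n$ in $L^{q'}(0,T;(W^{1,q}(\Omega))')$ of Proposition~\ref{partialtu}. An application of the Aubin--Lions--Simon lemma extracts a (not relabeled) subsequence and a limit $u_\ast$ with $u_n\to u_\ast$ strongly in $L^2(\Omega\times(0,T))$; interpolating against the uniform $L^{2\gamma+1}$ bound upgrades this to strong convergence in $L^p(\Omega\times(0,T))$ for every $1\le p<2\gamma+1$.

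Second, I would exploit that every term on the left-hand side of~\eqref{ent} is nonnegative and integrable on $(0,\infty)$, so that its tail over $[t_n,t_n+T]$ vanishes as $n\to\infty$. Translated to $u_n$, this reads
\begin{equation*}
\int_0^T\!\int_\Omega \Bigl\lvert\nabla u_n^{(\gamma+1)/2}\Bigr\rvert^2 \diff x\diff t \;+\; \int_0^T\!\int_\Omega u_n \log\!\left(\tfrac{u_n}{p_H^{1/\gamma}}\right)(p_n-p_H)\diff x\diff t \;\longrightarrow\;0 .
\end{equation*}
Combined with strong $L^p$ convergence and Fatou's lemma, the first integral forces $\nabla u_\ast^{(\gamma+1)/2}=0$ a.e., so $u_\ast(t,\cdot)$ is constant in space at almost every time. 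The second integral, combined with the pointwise inequality $x\log(x/p_H^{1/\gamma})(x^{\gamma}-p_H)\ge 0$ for $x\ge 0$, with equality iff $x\in\{0,p_H^{1/\gamma}\}$, forces that constant value to belong to $\{0,p_H^{1/\gamma}\}$ for a.e. $t\in(0,T)$.

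Third, to rule out time dependence and to show the limit is independent of the chosen subsequence, I would use that $\Phi$ is a Lyapunov functional: from~\eqref{entropy} and the sign computation in the proof of Proposition~\ref{prop:energy/entropy}, the map $t\mapsto \Phi(u(t))$ is nonincreasing and bounded below by $0$, hence converges to some $\Phi_\infty\ge 0$. The strong $L^p$ convergence with $p<2\gamma+1$ ensures uniform integrability of $u_n\log(u_n/p_H^{1/\gamma})$ (since $x\log x$ is dominated by $x^{\gamma+1}$ for $x$ large and is bounded near $0$), so $\Phi(u_n(t))\to \Phi(u_\ast(t))$ for a.e. $t$; but the left-hand side tends to $\Phi_\infty$ by construction, giving $\Phi(u_\ast(t))=\Phi_\infty$ for a.e. $t$. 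Since $\Phi$ takes only the values $|\Omega|$ on the constant $0$ and $0$ on the constant $p_H^{1/\gamma}$, exactly one of the two dichotomies occurs: either $\Phi_\infty=|\Omega|$ and $u_\ast\equiv 0$, or $\Phi_\infty=0$ and $u_\ast\equiv p_H^{1/\gamma}$. Because $\Phi_\infty$ depends only on $u_0$, the same alternative is reached by every divergent sequence $\{t_n\}$, proving uniqueness of the limit across subsequences.

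The main obstacle is the passage to the limit in the source-related dissipation term $u_n\log(u_n/p_H^{1/\gamma})(p_n-p_H)$ and in $\Phi(u_n)$: both involve $p_n=u_n^\gamma$, which blows up pointwise under weak convergence unless one has \emph{strong} convergence in $L^p$ for $p$ strictly greater than $\gamma+1$, and this is precisely the reason why the estimate $u\in L^{2\gamma+1}(\Omega\times(0,T))$ from \eqref{unifm} plays a decisive role in step one.
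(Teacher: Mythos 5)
Your proposal is correct and follows essentially the same route as the paper: translate the uniform energy/entropy bounds to the shifted sequence, extract a strong $L^p$ limit by Aubin--Lions plus interpolation with the $L^{2\gamma+1}$ bound, and use the vanishing tails of the globally integrable entropy dissipation together with Fatou to force $u_\ast$ to be one of the two spatial constants $0$ or $p_H^{1/\gamma}$, with the dichotomy for $\Phi$ following from its monotonicity. The only (harmless) deviations are that you use the $\bigl\vert \nabla u^{(\gamma+1)/2}\bigr\vert^2$ dissipation term rather than the nonlocal gradient term to obtain spatial constancy, and you exclude time-switching between the two equilibria via the a.e.-in-$t$ identity $\Phi(u_\ast(t))=\Phi_\infty$ rather than via the paper's weak continuity of the spatial mean.
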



We fix $T>0$, we also consider $n$ large enough such that $t_n>T$. Observe that $u_n$ solves the problem
\begin{equation} \label{nonlocalA}
		\begin{cases}
		\partial_t u_n -\text{div}(u_n\nabla\mu_n)=u_nG(p_n)\quad \text{in }\Omega\times  (-T, T), 
  \\
		\mu_n=p_n+B_\varepsilon(u_n),\qquad  \quad p_n=u_n^\gamma.
		\end{cases}
\end{equation}
By Propositions \ref{prop:energy/entropy} and \ref{prop:unif_energy_time}, we have the following uniform-in-$n$ bounds :
\begin{align}
		\label{PhiA}&
		\mathcal{E}(u_n(t))\leq C	\quad \forall t\in[-T, T],\\&
		\Phi(u_n(t))\leq \Phi(u_0) \quad \forall t\in (-T, T), \qquad \Phi(u_n(t)) \text{ non-increasing}.
\label{PhiB}\end{align}
		From this and  Propositions~\ref{prop:energy/entropy} and~\ref{partialtu}, we deduce the following uniform bounds, for any $T>0$,  
\begin{equation}		\label{unif}
\begin{split}
		&\Vert u_n\Vert_{L^\infty(-T,T;L^{\gamma+1}(\Omega))}+\Vert u_n\Vert_{L^2(-T,T;H^1(\Omega))}
		+\Vert u_n \Vert_{L^{2\gamma+1}(\Omega\times(-T,T))}\\&+\Vert \partial_t u_n\Vert_{L^{q^\prime}(-T,T;(W^{1,q}(\Omega))')}+\Vert \partial_t u_n\Vert_{L^{2}(-T,T;(W^{1,r}(\Omega))')}+\Vert \nabla u_n^\frac{\gamma+1}{2}\Vert_{L^2(-T,T;L^2(\Omega))}\leq C(T),
\end{split}
\end{equation}
with 
$q$ and $r$ as in Proposition~\ref{partialtu} which implies, by standard arguments, the following convergences (up to subsequences) as $n\to\infty$ to the same function $u_{\ast} \geq 0$ 
\begin{align*}
  &	u_n\rightharpoonup u_{\ast} \quad \text{in }L^2(-T,T;H^1(\Omega))  \quad \text{and }  L^{2\gamma+1}(\Omega\times(-T,T))
\\
  &	\partial_t u_n\rightharpoonup \partial_t u_{\ast} \quad \text{in }L^{q^\prime}(-T,T;(W^{1,q}(\Omega))')
    \quad \text{and in }L^{2}(-T,T;(W^{1,r}(\Omega))'),
\\
 &u_n\to u_{\ast} \quad \text{in }L^p(\Omega\times(0,T)), \quad p\in [1, 2\gamma+1),\qquad  \text{and almost everywhere},
\\
 & \nabla u_n^\frac{\gamma+1}{2}\rightharpoonup \nabla u_{\ast}^\frac{\gamma+1}{2}  \quad \text{in }L^2(\Omega\times (0,T)).
\end{align*}
We want to characterize $u_{\ast}$. Notice that from \eqref{ent}, we have for all $T>0$,
\begin{align*} 
	  \int_{-T}^T \int_\Omega\int_\Omega &\omega_\varepsilon(y)\vert \nabla u_n(x)-\nabla u_n(x-y) \vert^2\diff x \diff y \diff s
\\
&  = \int_{t_n-T}^{t_n+T} \int_\Omega\int_\Omega \omega_\varepsilon(y)\vert \nabla u(x)-\nabla u(x-y) \vert^2\diff x \diff y \diff s   \to 0  \qquad \text{as } n \to \infty
\end{align*}
by integrability on $(0, \infty)$. By weak-lower semicontinuity, in the limit 
\[
\int_{-T}^T \int_\Omega\int_\Omega \omega_\varepsilon(y)\vert \nabla u_{\ast}(x)-\nabla u_{\ast}(x-y) \vert^2\diff x \diff y \diff s = 0
\]
so that $u_{\ast}(t)$ is constant in space for a.e. $t \in [0,T]$. In fact, $u_{\ast}(t)$ is constant in space \text{for all} $t\in[0,T]$ because $u_{\ast}\in C_{weak}([-T,T];L^{\gamma+1}(\Omega))$  (see Remark~\ref{continu}) so that for all $\varphi \in C_c^{\infty}(\Omega)$, the function $t \mapsto \int_{\Omega} u_{\ast}(t,x) \, \DIV \varphi(x) \diff x$ is continuous. Similarly, from~\eqref{ent} and the Fatou lemma, 
$$
u_\ast\log\left(\frac{u_{\ast}}{p_H^{\frac 1 \gamma}}\right)(u_{\ast}^{\gamma} - p_H) = 0,  \text{ for a.e. } t \in (-T,T),
$$ 
so that either $u_{\ast}(t) = 0$ or $u_{\ast}(t) = p_H^{\frac{1}{\gamma}}$. Since $u_{\ast}\in C_{weak}([-T,T];L^{\gamma+1}(\Omega))$, the average $\overline{u}_{\ast}(\cdot)\in C([0,T])$ so that $u_\ast\equiv \overline{u}_{\ast}$ can attain only one of the values for all times. 
\\

Because $\Phi(u(t))$ is non-increasing in time, it has a limit as $t\to\infty$, say $\Phi_\ast$. Then clearly $\Phi_\ast=\lim_{n\to\infty}\Phi(u(t+t_n))=\Phi(u_\ast)$, and thus either $\Phi_\ast=\Phi(0)= |\Omega|$ if $u_\ast=0$ or $\Phi_\ast=\Phi(p_H^{\frac 1 \gamma})=0$ if $u_\ast=p_H^{\frac 1 \gamma}$. Clearly this also implies that, given another sequence of times $\{t_m\}_m$, we can repeat the same argument and extract a (non relabeled) subsequence $\{u(t_m)\}_m$ converging to the same constant $u_\ast$. This concludes the proof of Lemma~\ref{ll}.\\

 \textbf{Step 2. Stability of the equilibria.} We complete Lemma~\ref{ll} with the following 
	\begin{lemma}
 \label{lemma:conv}
		Under notation of Lemma \ref{ll}, if $u_{*}\equiv 0$ then $u_0=0$ almost everywhere in $\Omega$. 
	\end{lemma}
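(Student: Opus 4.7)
The plan is to contradict the conclusion by showing that if $u_0\not\equiv 0$, then $\Phi(u_0)<|\Omega|$ strictly, which combined with monotonicity of $\Phi(u(t))$ prevents the convergence $\Phi(u(t))\to|\Omega|$ forced by Lemma~\ref{ll} in the case $u_\ast\equiv 0$. The argument is short and exploits the initial bound $u_0\le p_H^{1/\gamma}$ crucially.

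First, I would observe that $\Phi(u(t))$ is non-increasing along any weak solution. Indeed, from the entropy identity \eqref{entropy} with $G(p)=p_H-p$, one gets
\[
\frac{\diff}{\diff t}\Phi(u)=-(\text{nonnegative dissipation})+\frac{1}{p_H^{1/\gamma}}\int_\Omega u\log\!\left(\frac{u}{p_H^{1/\gamma}}\right)(p_H-u^\gamma)\diff x,
\]
and the integrand in the last term is pointwise $\le 0$ because $\log(u/p_H^{1/\gamma})$ and $p_H-u^\gamma$ have opposite signs (or both vanish) at every point. Thus $\frac{\diff}{\diff t}\Phi(u(t))\le 0$ for all $t\ge 0$.

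Next, I would use Assumption~\ref{initial1} to estimate $\Phi(u_0)$. Writing $v:=u_0/p_H^{1/\gamma}\in[0,1]$ a.e., the integrand in \eqref{eq:entropy} becomes $f(v)=v\log v-v+1$, which satisfies $f(0)=1$, $f(1)=0$ and $f'(v)=\log v<0$ on $(0,1)$. Hence $f(v)<1$ for every $v\in(0,1]$. Under the assumption $u_0\not\equiv 0$, the set $\{u_0>0\}$ has positive Lebesgue measure, so $f(v(x))<1$ on a set of positive measure and consequently
\[
\Phi(u_0)=\int_\Omega f(v(x))\diff x<|\Omega|.
\]

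Combining the two facts, for all $t\ge 0$,
\[
\Phi(u(t))\le \Phi(u_0)<|\Omega|,
\]
which contradicts the assertion of Lemma~\ref{ll} that $\Phi(u(t))\to|\Omega|$ whenever $u_\ast\equiv 0$. Therefore $u_\ast\equiv 0$ can only occur if $u_0\equiv 0$ a.e. in $\Omega$, which is the desired claim. I do not anticipate any genuine technical obstacle here: the only point worth checking carefully is that the monotonicity of $\Phi$ holds for the weak solutions produced by Lemma~\ref{exist} (which is justified by an approximating scheme as in \cite{elbar-skrzeczkowski}), so the inequality $\Phi(u(t))\le\Phi(u_0)$ is rigorously available.
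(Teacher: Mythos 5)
Your proof is correct and follows essentially the same route as the paper: both arguments rest on the monotonicity of $\Phi$ along solutions, the limit $\Phi(u(t))\to|\Omega|$ from Lemma~\ref{ll}, and the fact that the entropy integrand is strictly below $1$ wherever $0<u_0\le p_H^{1/\gamma}$. The only difference is that you phrase it as a contrapositive ($u_0\not\equiv 0\Rightarrow\Phi(u_0)<|\Omega|$, contradiction) whereas the paper deduces $\Phi(u_0)=|\Omega|$ and then concludes $u_0=0$ a.e.; these are the same argument.
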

	\begin{proof}
		Since $\Phi(u(t))$ is nonincreasing in time and, by Lemma \ref{ll}, $\Phi(u(t))\to \vert \Omega\vert$, we obtain
		\begin{align*}
		|\Omega|= \lim_{t\to \infty} \Phi(u(t ))\leq \Phi(u_0)\leq |\Omega|.
		\end{align*}
  Thus we infer  $\Phi(u_0)= |\Omega|$.
  Being the entropy function  $g:x\mapsto \frac{x}{p_H^{\frac 1 \gamma}}\log\left(\frac{x}{p_H^{\frac 1 \gamma}}\right)-\frac{x}{p_H^{\frac 1 \gamma}}+1$ decreasing for $x \in [0, p_H^{\frac 1 \gamma})$, and since  $u_0 \leq p_H^{\frac 1 \gamma}$, it follows that $u_0=0$ almost everywhere in $\Omega$.
	\end{proof}

\textbf{Step 3. Existence of the $L^{q}(\Omega)$-limit as $t\to\infty$.}

\begin{lemma} \label{convL1} Assume that $u_0\not\equiv 0$. Then it holds
$$  \lim_{t\to\infty} u(t) =p_H^\frac 1 \gamma \quad\text{ in }L^q(\Omega) \qquad \forall q\in[1,\gamma+1). $$
\end{lemma}
\begin{proof} First note that, the pointwise values $u(t)$ as an $L^{\gamma+1}(\Omega)$-function makes sense thanks to the weak continuity obtained in Lemma~\ref{exist}. Now, we consider the decomposition of~$\Phi$:
 \begin{align}\Phi(u)=\dfrac{1}{p_H^\frac 1 \gamma}\left(\int_\Omega u\log\left(\dfrac{u}{\overline{u}}\right)\diff x +\int_\Omega u\log\left(\dfrac{\overline{u}}{p_H^\frac 1 \gamma}\right)\diff x +\int_\Omega (p_H^\frac 1 \gamma-u)\diff x \right)  \label{decomposition}
\end{align}
and we study the limits of terms appearing in \eqref{decomposition}. Since $u_0\not\equiv 0$, from Lemma \ref{ll} we obtain
 $\lim_{t \to \infty} \Phi(u(t)) = \Phi(p_H^{1/\gamma}) = 0$.\\

By Proposition \ref{prop:mass_conserv}, for any sequence $\{t_n\}_n$ there exists a (nonrelabeled) subsequence such that, for some $\kappa$,
\begin{equation}\label{eq:convergence_of_masses}
\overline{u}(t_n)\to \kappa \in [0, p_H^{\frac{1}{\gamma}}].
\end{equation}
We prove that $\kappa = p_H^{\frac{1}{\gamma}}$. Indeed, the function $g:x\mapsto \frac{x}{p_H^{\frac 1 \gamma}}\log\left(\frac{x}{p_H^{\frac 1 \gamma}}\right)-\frac{x}{p_H^{\frac 1 \gamma}}+1$ is convex and continuous. As $\Phi(u(t))=\int_\Omega g(u(t))\diff x $, by Jensen's inequality we get 
$$ 0\leq g(\overline{u}(t))\leq \frac{1}{\vert\Omega\vert}\Phi(u(t))\to 0\text{ as }t\to\infty
$$
so that $g(\kappa)=0$ and the claim follows. It follows that $\overline{u}(t)\to  p_H^{\frac{1}{\gamma}}$ as $t \to \infty$.\\ 

Hence, passing to the limit in \eqref{decomposition} 
\begin{align}  \int_\Omega u(t)\log\left(\dfrac{u(t)}{\overline{u}(t)}\right)\diff x \to 0\text{ when }k\to\infty.\label{tinfty}  \end{align}
From Lemma \ref{Kullback} and \eqref{tinfty}, together with the fact that $\overline{u}(t)\to p_H^\frac 1 \gamma$ as $n\to\infty$, we then deduce
\begin{equation*}
\norm{u(t) - p_{H}^{1/\gamma}}_{L^{1}(\Omega)}\to 0    \quad \text{when $k\to\infty$}.   
\end{equation*}
Furthermore, by the bound in $L^\infty_tL^{\gamma+1}_x$ given by the control of the energy $\mathcal{E}$ in Proposition \ref{prop:energy/entropy}, we can deduce  the convergence \eqref{conv} by interpolation. The proof of Theorem~\ref{thm} in the case of a nonzero source term $G$ is thus concluded.
\end{proof}

\begin{figure}[h]
    \centering
    \begin{subfigure}[b]{0.45\textwidth}
        \includegraphics[width=\textwidth]{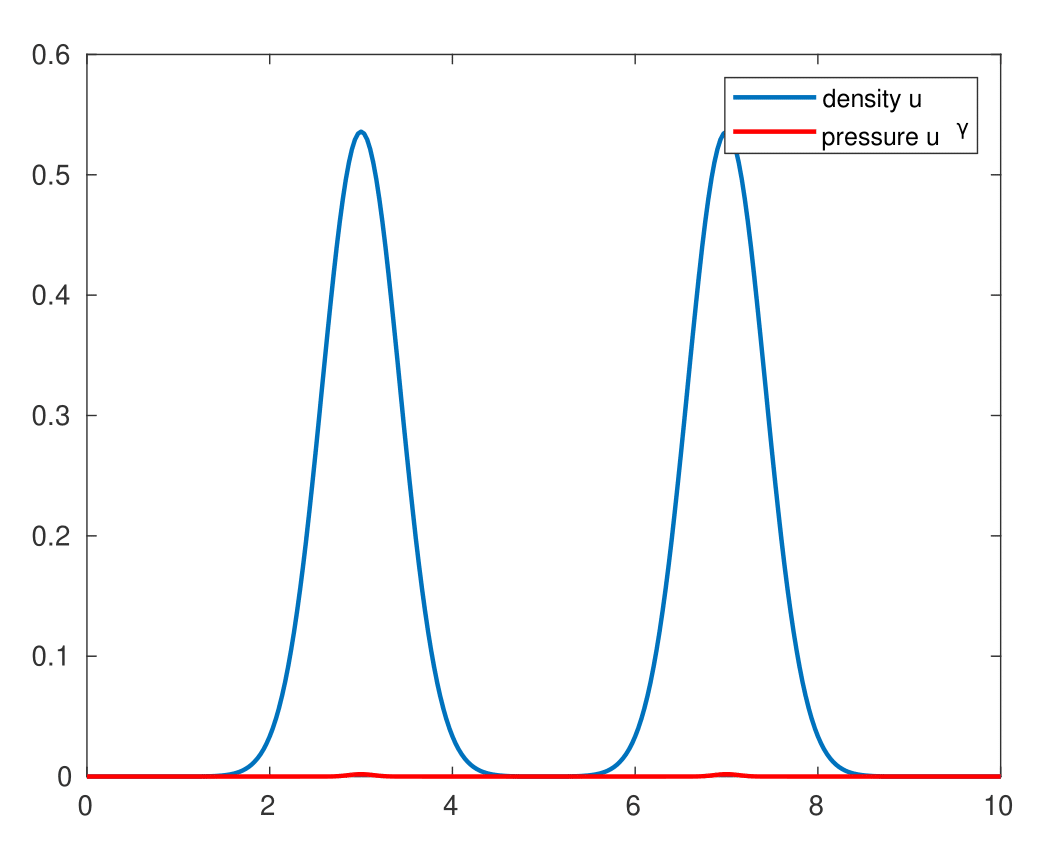}
        \caption{Initial condition}
    \end{subfigure}
    \begin{subfigure}[b]{0.45\textwidth}
        \includegraphics[width=\textwidth]{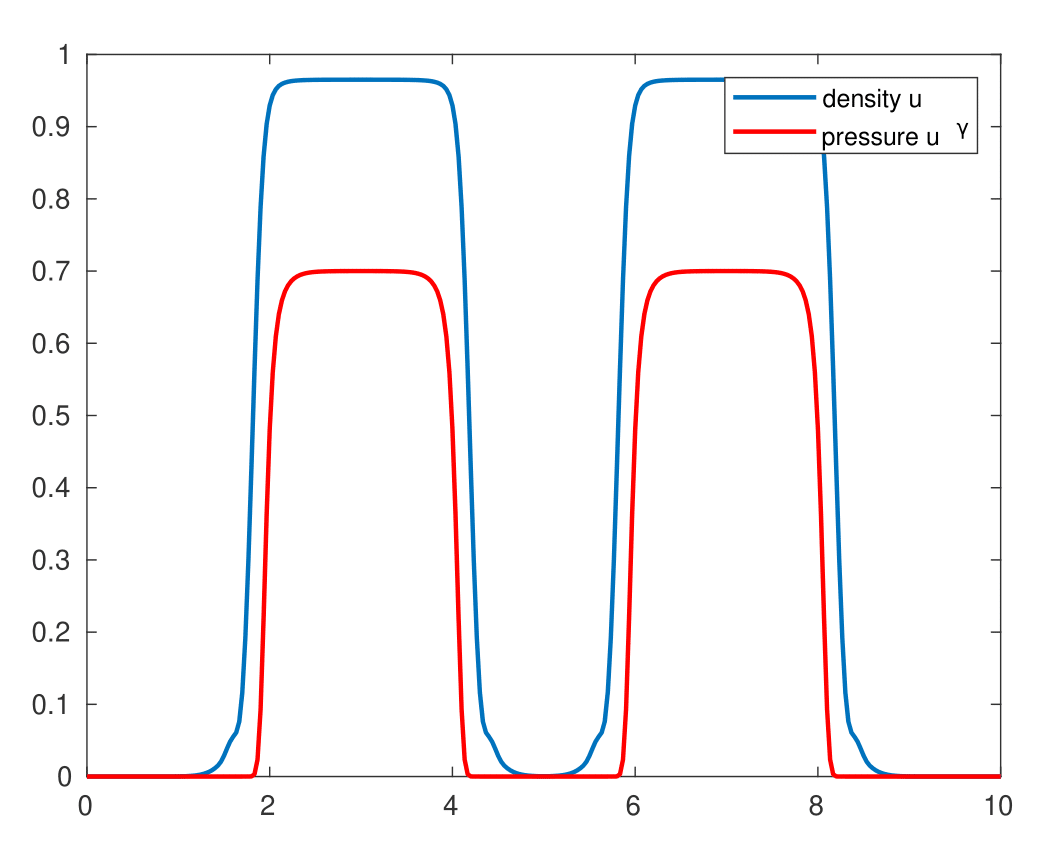}
        \caption{Evolution at $t=0.03$}
    \end{subfigure}
        \begin{subfigure}[b]{0.45\textwidth}
        \includegraphics[width=\textwidth]{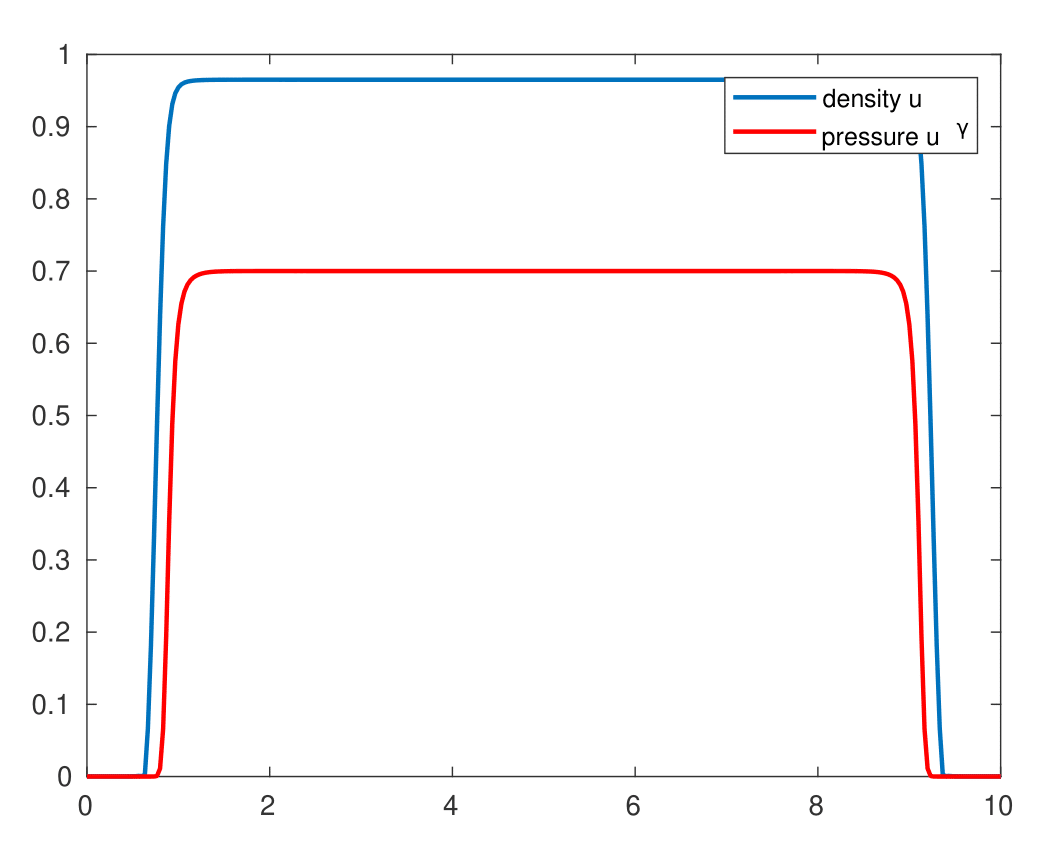}
        \caption{Evolution at $t=0.14$}
    \end{subfigure}
    \begin{subfigure}[b]{0.45\textwidth}
        \includegraphics[width=\textwidth]{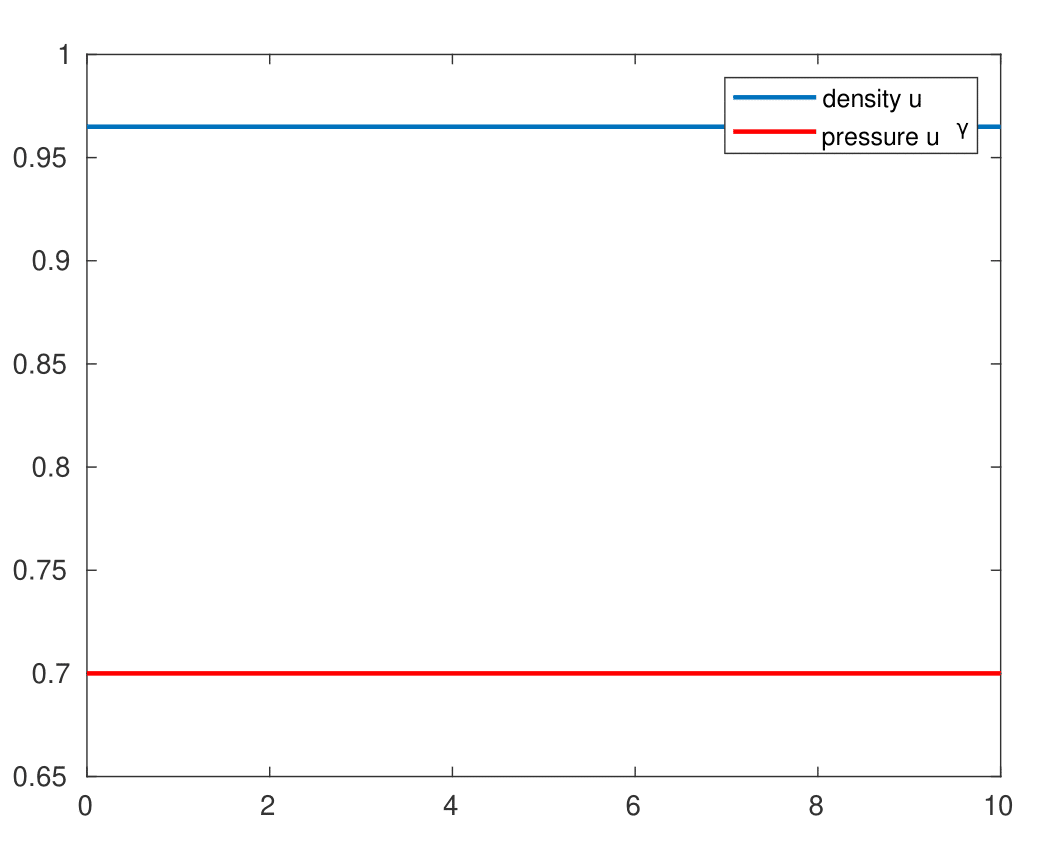}
        \caption{Evolution at $t=0.25$}
    \end{subfigure}
    \caption{We provide here 1D numerical simulations illustrating this theoretical result. We assume $\gamma=10$ and $p_{H}=0.7$. We thus have $p_{H}^{1/\gamma}\approx 0,96$. The initial condition is taken as a double gaussian as in Figure (a).
    }
    \label{fig:asymptotics_source}
\end{figure}


\subsection{Case $G(p)=0$}\label{sect:convergence:stationary_states_g=0}

When there is no source term, the solution converges to the mean value. The argument is a simple consequence of the logarithmic Sobolev inequality and the Csisz\'{a}r–Kullback–Pinsker inequality. We refer to~\cite{MR3770199} for other systems where it is applied. We consider the relative entropy between the solution $u$ and a stationary state $u_{\ast}$, defined as
\begin{equation*}
\Phi(u|u_{\ast})=\int_{\Omega}\left(u\log\left(\frac{u}{u_{\ast}}\right)-u+u_{\ast}\right) \diff x .     
\end{equation*}
In our case $u_{\ast}=\overline{u}_0$ a.e. in $\Omega$. Notice that, by the conservation of mass, we have 
\begin{equation*}
\Phi(u|\overline{u})=\int_{\Omega}u\log\left(\frac{u}{\overline{u}}\right) \diff x . \end{equation*}

Since $\overline{u}(\cdot)\equiv\overline{u}_{0}$, we see that $\Phi(u|\overline{u})$ satisfies the identity (or at least the inequality for weak solutions): for almost any $t\geq0$, 
\begin{equation*}
\frac{d\Phi(u|\overline{u})}{dt} +\frac{1}{2\varepsilon^2} \int_\Omega\int_\Omega \omega_\varepsilon(y)\vert \nabla u(x)-\nabla u(x-y) \vert^2\diff x \diff y+\int_\Omega \dfrac{4\gamma}{(\gamma+1)^2}\left\vert \nabla \vert u\vert^{\frac{\gamma+1}{2}}\right\vert^2 \diff x =0.
\end{equation*}

The generalized logarithmic Sobolev inequality~\cite[Section 3.1]{MR3770199} provides us the exponential decay of the relative entropy. Indeed we have 
\begin{lemma} 
For any $m\ge(d-2)_{+}/d$, there exists $C(\Omega,m)$ such that
\begin{equation*}
\int_{\Omega}\vert \nabla(u^{\frac m 2})\vert^2\diff x \ge C(\Omega,m)\overline{u}^{m-1}
\int_{\Omega}u\log\left(\frac{u}{\overline{u}}\right)\diff x. 
\end{equation*}
\end{lemma}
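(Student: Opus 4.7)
The strategy is a scaling reduction to mean-one densities, followed by a ``defective log-Sobolev'' argument that combines the Poincar\'e inequality on the torus with a Sobolev embedding whose critical threshold is precisely $m \geq (d-2)_+/d$.

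\textbf{Step 1 (Scaling).} I would first set $v := u/\overline{u}$, so that $\overline{v} = 1$. A direct change of variables gives
\[
\int_\Omega |\nabla u^{m/2}|^2 \diff x = \overline{u}^{\,m}\int_\Omega |\nabla v^{m/2}|^2 \diff x, \qquad \int_\Omega u \log\!\left(\tfrac{u}{\overline{u}}\right)\!\diff x = \overline{u}\int_\Omega v \log v \diff x,
\]
so that the factor $\overline{u}^{\,m-1}$ in the statement arises automatically and it suffices to prove the normalized inequality
\[
\int_\Omega v \log v \diff x \leq C(\Omega,m)\int_\Omega |\nabla v^{m/2}|^2 \diff x \qquad \text{for every } v \geq 0 \text{ with } \overline{v}=1.
\]
This decouples the problem from $\overline{u}$; the constant $C(\Omega,m)$ will then depend only on $\Omega$ (through its Poincar\'e and Sobolev constants) and on $m$.

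\textbf{Step 2 (Normalized inequality).} For mean-one $v$ and $w := v^{m/2}$, I would split the entropy $\int (v\log v - v + 1)\diff x$ according to whether $v$ is close to $1$ or far from it. In the bulk region $\{v \lesssim 1\}$, $v\log v - v + 1$ is quadratic in $v - 1$, hence pointwise comparable to $(w-1)^2 \leq 2\,(w - \overline{w})^2 + 2|\Omega|(\overline{w}-1)^2$: Poincar\'e on the torus controls the first summand by $\|\nabla w\|_{L^2}^2$, while the second is absorbed by interpolation using the constraint $\overline{v}=1$. In the tail region $\{v \gg 1\}$, $v\log v$ is dominated by $v^m = w^2$ up to a logarithmic factor, and the $L^p$-integrability required to handle this tail is exactly what the Sobolev embedding $H^1(\Omega)\hookrightarrow L^{2d/(d-2)}(\Omega)$ in dimension $d\geq 3$ (trivial in $d \leq 2$) provides for $w$; the minimal exponent $m\geq(d-2)_+/d$ is then precisely what makes the resulting integrability exponent on $v$ compatible with the mean-one constraint.

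\textbf{Main obstacle.} The delicate step is the tail regime. The entropy grows like $v\log v$, while $|\nabla v^{m/2}|^2$ only controls a power-type $v^m$ quantity, and no purely pointwise comparison bridges this gap uniformly in $v$. The Sobolev embedding does the job, but its exponent degenerates precisely as $m \downarrow (d-2)_+/d$, so the inequality is essentially saturated at the threshold --- any stronger assumption on $m$ would make the proof considerably easier (for instance, $m\geq 1$ allows a direct pointwise bound $v\log v - v + 1 \leq C_m (v^{m/2}-1)^2$). Once this Sobolev/interpolation step is settled, gluing it to the Poincar\'e bulk estimate and recalling Step~1 yields the claim with a constant $C(\Omega, m)$ of the advertised form.
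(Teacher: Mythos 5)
The paper does not prove this lemma at all: it is quoted as a known ``generalized logarithmic Sobolev inequality'' with a pointer to \cite[Section 3.1]{MR3770199}, so there is no in-paper argument to compare yours against. Your Step 1 is correct and is indeed the standard reduction: with $v=u/\overline{u}$ the factor $\overline{u}^{\,m-1}$ cancels and everything reduces to $\int_\Omega v\log v\,\diff x\le C\int_\Omega|\nabla v^{m/2}|^2\diff x$ for $\overline{v}=1$. Your Step 2 is the standard bulk/tail strategy for such entropy--entropy-production inequalities, but as written it glosses over two genuinely nontrivial points. First, the mean term $(\overline{w}-1)^2$ is not ``absorbed by interpolation'' for free: writing $1-\overline{w}=\overline{\,1-v^{m/2}+\tfrac m2(v-1)\,}$ (using $\overline{v}=1$), the integrand grows like $v$ at infinity while $(v^{m/2}-1)^2$ grows like $v^m$, so for $m<1$ there is no pointwise comparison and you are thrown back onto the same tail machinery. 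Second, the Sobolev step does not produce $\|\nabla w\|_{L^2}^2$ directly but a different power of $\|\nabla w\|_{L^2}$, so you need an interpolation against the fixed $L^1$ norm of $v$ and a case distinction between small and large dissipation before Young's inequality closes the estimate. These are repairable, but they are the actual content of the proof.

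The more serious issue is your claim that the argument works \emph{at} the threshold $m=(d-2)_+/d$. For $d\ge3$ and $m=(d-2)/d$, the embedding $H^1\hookrightarrow L^{2d/(d-2)}$ applied to $w=v^{m/2}$ yields only $v\in L^1$, which cannot control $\int v\log v$. In fact the normalized inequality fails there: taking $v\approx n$ on a ball of radius $r\sim n^{-1/d}$ (carrying a fixed fraction of the mass) and $v\approx$ const elsewhere, one computes $\int v\log v\sim\log n\to\infty$ while $\int|\nabla v^{m/2}|^2\sim n^{m}r^{d-2}\sim n^{\,m-(d-2)/d}=O(1)$ at $m=(d-2)/d$; likewise $m=0$ in $d\le2$ is trivially false. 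So your proof can only deliver the inequality for $m$ \emph{strictly} above $(d-2)_+/d$ (which is all the paper uses, since there $m=\gamma+1\ge2$), and the endpoint case either requires a different statement or should be excluded. You should either restrict to $m>(d-2)_+/d$ or flag that the lemma as stated is borderline at the endpoint.
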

In our case $m=\gamma+1$ and $\overline{u}\equiv \overline{u}_0$ is bounded from below by a positive constant since we consider an initial condition $u_{0}\not\equiv 0$. Therefore, by the Gronwall Lemma we conclude that the entropy $\Phi$ experiences an exponential decay as $t\to \infty$:
$$
\frac{d\Phi(u|\overline{u})}{dt}+C(\Omega,\gamma)\overline{u}_0^\gamma\Phi(u\vert\overline{u})\leq 0
$$
Therefore, we have, for some $C=C(\Omega,\gamma, \Phi(u_0),\overline{u}_0)$,
$$
\Phi(u|\overline{u})\leq Ce^{-Ct},\quad\forall t\geq 0.
$$
To prove that this implies the exponential decay of the solution we use the Csisz\'{a}r–Kullback–Pinsker inequality of Lemma \ref{Kullback}: there exists $C(\Omega)$ such that
	\begin{equation*}
	\Phi(u|\overline{u})\ge C(\Omega)\|u-\overline{u}_0\|^{2}_{L^{1}(\Omega)}.  
	\end{equation*}
The exponential decay in $L^1(\Omega)$ of $u$ towards $\overline{u}_0$ then easily follows. By the $L^\infty(0,T; L^{\gamma+1}(\Omega))$ bound given by the control of the energy $\mathcal{E}$ (which is the same as in the case with a source term $G$ given in Proposition \ref{prop:energy/entropy}), we can in conclusion deduce the exponential convergence \eqref{conv2} by interpolation. This ends the proof of Theorem~\ref{thm:conv_stationary}.

\subsection{Longtime behavior of the local Cahn-Hilliard equation}
The nonlocal Cahn-Hilliard equation can be also seen as an approximation of the local Cahn-Hilliard equation:
\begin{equation}
\label{local}
\begin{cases}
\partial_t u -\text{div}(u\nabla\mu)=uG(p)\quad \text{in }\Omega\times (0,T),\\
\mu=p-\Delta u,\quad p=u^{\gamma},\\
u(0)=u_0\quad \text{ in }\Omega.
\end{cases}
\end{equation}

This follows, at least formally, with a Taylor expansion, using the symmetry of the kernel $\omega_{\varepsilon}$ in the operator $B_{\eps}$ of~\eqref{nonlocal}, and for a rigorous proof we refer, e.g., to~\cite{elbar-skrzeczkowski}. Therefore, one may wonder whether the previous results obtained for the nonlocal Cahn-Hilliard equation also hold for the local one. It turns out that for the convergence to the stationary states, the result is the same. Indeed, one mainly uses arguments based on the entropy, so that the nonlocal term does not play a role: concerning the entropy $\Phi$, we can consider again \eqref{eq:entropy} and formally get for \eqref{local}
\begin{align}
\frac{d}{dt}\Phi(u)+\frac{1}{p_H^{\frac 1 \gamma}}\int_\Omega \vert \Delta u\vert^2\diff x +\frac{1}{p_H^{\frac 1 \gamma}}\int_\Omega \dfrac{4\gamma}{(\gamma+1)^2}\left\vert \nabla \vert u\vert^{\frac{\gamma+1}{2}}\right\vert^2 \diff x -\int_\Omega u\log\left(\frac{u}{p_H^{\frac 1 \gamma}}\right)G(p)\diff x =0,
\label{entrid}
\end{align}
which is very similar to the result in Proposition \ref{prop:energy/entropy}.
Concerning the energy, we set 
$$
\mathcal{E}_l(u):=\frac{1}{2}\int_\Omega \vert \nabla u\vert^2\diff x +\int_\Omega \dfrac{u^{1+\gamma}}{1+\gamma}\diff x 
$$
and thus
\begin{align*}
\frac{d}{dt}\mathcal{E}_l(u)+\int_\Omega u\vert \nabla \mu\vert^2\diff x =\int_\Omega uG(p)\mu \diff x .
\end{align*}
Now we observe that, integrating by parts,
\begin{align*}
\int_\Omega uG(p)\mu \diff x &=\int_\Omega u(p_H-p)p\diff x -\int_\Omega u(p_H-p)\Delta u \diff x  
\\
& =\int_\Omega u(p_H-p)p\diff x  +\int_\Omega\vert\nabla u\vert^2(p_H-p)\diff x -\int_\Omega u\nabla p\cdot \nabla u \diff x \\&=  
\int_\Omega u(p_H-p)p\diff x  +\int_\Omega\vert\nabla u\vert^2(p_H-p)\diff x -\gamma\int_\Omega p\vert\nabla u\vert^2 \diff x .
\end{align*}
Moreover, notice that, since the relation \eqref{mass} still holds with the same proof,
$$
p_H\int_\Omega up\diff x \leq \frac{1}{2}\int_\Omega up^2\diff x +\frac{p_H^2 }{2}\int_\Omega u\diff x \leq \frac{1}{2}\int_\Omega up^2\diff x +C.
$$
Then we can rewrite the energy inequality as 
\begin{align}
&\nonumber\frac{d}{dt}\mathcal{E}_l(u)+\int_\Omega u\vert \nabla \mu\vert^2\diff x +\frac 1 2\int_\Omega p^2 u\diff x +\int_\Omega \vert\nabla u\vert^2p\diff x +\gamma \int_\Omega p\vert \nabla u\vert^2 \diff x \\&\leq \int_\Omega \vert\nabla u\vert^2\diff x +C\leq C(\mathcal{E}_l(u)+1),\nonumber
\end{align}
which is again very similar to the one obtained in Proposition \ref{prop:energy/entropy} for the nonlocal case.
Therefore, with these estimates we can basically perform again all the arguments of Section \ref{section::equil} and obtain again the same result as in Theorem \ref{thm:conv_stationary}. Note that in the local case, differently from the nonlocal one, we can also repeat the same arguments in the case of a smooth bounded domain $\Omega\subset \R^d$ with homogeneous Neumann boundary conditions $u\nabla\mu\cdot \textbf{n}=0$ and $\nabla u\cdot \textbf{n}=0$ on $\partial\Omega\times(0,\infty)$, where $\textbf{n}$ is the outward unit normal. 
\begin{remark}
The incompressible limit, $\gamma\to\infty$, is very different and remains  an open question in the local Cahn-Hilliard case. Indeed, obtaining an equation for the pressure $p$ from which to deduce a uniform-in-$\gamma$ $L^2(0,T;H^1(\Omega))$-control on $p$ seems still out of reach. Therefore no analogous of Theorem \ref{thm:incompressible}  can be stated in this local case.
\end{remark}

\section*{Acknowledgements}
J.S. was supported by the National Science Center grant 2017/26/M/ST1/00783. A.P. has been partially funded by MIUR-PRIN research grant n. 2020F3NCPX and is also member of Gruppo Nazionale per l'Analisi
Ma\-te\-ma\-ti\-ca, la Probabilit\`{a} e le loro Applicazioni (GNAMPA),
Istituto Nazionale di Alta Matematica (INdAM).

\appendix

\section{Technical tools} \label{app:technical_tools}

Several tools have been used to carry out some proofs. First, we present a lemma about geometric convergence of numerical sequences, whose proof can be easily obtained by induction (see, e.g., \cite[Ch.2, Lemma 5.6]{MR0241822} ): 
\begin{lemma}
	\label{conva}
	Let $\{y_n\}_{n\in\N\cup \{0\}}\subset \R^+$ satisfy the recursive inequality 
	\begin{align}
	y_{n+1}\leq Cb^ny_n^{1+\epsilon},
	\label{ineq}\quad \forall n\geq 0,  \qquad \text{and} \qquad  y_0\leq \theta:= C^{-\frac{1}{\epsilon}}b^{-\frac{1}{\epsilon^2}},
	\end{align}
	for some $C>0$, $b>1$ and $\epsilon>0$. Then, $y_n\to 0$ for $n\to \infty$ with geometric rate
	\begin{align}
	y_n\leq \theta b^{-\frac{n}{\epsilon}},\qquad \forall n\geq 0.
	\label{yn}
	\end{align}
\end{lemma}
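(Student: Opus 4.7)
The plan is to prove the estimate \eqref{yn} by induction on $n$, from which the convergence $y_n \to 0$ follows immediately since $b>1$ implies $b^{-n/\epsilon}\to 0$.

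For the base case $n=0$, the bound $y_0 \leq \theta = \theta\, b^{-0/\epsilon}$ is precisely the assumption on the initial value.

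For the inductive step, suppose $y_n \leq \theta\, b^{-n/\epsilon}$. Using the recursive inequality \eqref{ineq} together with the induction hypothesis, I compute
\begin{align*}
y_{n+1} \leq C\, b^n\, y_n^{1+\epsilon} \leq C\, b^n\, \theta^{1+\epsilon}\, b^{-n(1+\epsilon)/\epsilon} = C\, \theta^{1+\epsilon}\, b^{-n/\epsilon}.
\end{align*}
To conclude, I need $C\, \theta^{1+\epsilon}\, b^{-n/\epsilon} \leq \theta\, b^{-(n+1)/\epsilon}$, which after cancellation is equivalent to $C\, \theta^{\epsilon} \leq b^{-1/\epsilon}$, that is $\theta \leq C^{-1/\epsilon}\, b^{-1/\epsilon^2}$. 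This is exactly the assumption on $\theta$ in \eqref{ineq}, so the inductive step closes.

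There is no real obstacle here: the whole argument is a bookkeeping of exponents, and the only delicate point is to verify that the constant $\theta = C^{-1/\epsilon} b^{-1/\epsilon^2}$ has been chosen precisely so that the induction propagates at each step with exactly the geometric factor $b^{-1/\epsilon}$ gained per iteration. Once this is checked, both \eqref{yn} and the convergence $y_n \to 0$ follow at once.
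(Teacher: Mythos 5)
Your induction is correct: the base case is the hypothesis on $y_0$, and the inductive step closes because $C\,\theta^{\epsilon}=b^{-1/\epsilon}$ exactly, which is precisely how $\theta$ was chosen. This is the same elementary induction the paper has in mind (it only cites a reference and notes the proof "can be easily obtained by induction"), so nothing further is needed.
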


Next, we state a theorem concerning the absolute continuity of some integrals of convex functions in $\R$, whose proof can be found, e.g. in~\cite[p.101]{MR610796}:
\begin{theorem}
Let $T>0$ and let $h: \R\to\R$ be a convex and lower semicontinuous function. Assume that 
\begin{itemize}
	\item $f\in L^2(0,T;H^1(\Omega))$ and $\partial_t f\in L^2(0,T; (H^1(\Omega))')$,
	\item $g(x,t)\in \partial h(x,t)$ for almost every $(x,t)\in \Omega\times(0,T)$,
	\item $g\in L^2(0,T;H^1(\Omega))$.
\end{itemize} 
Then, the function $t\mapsto \int_\Omega h(f(x,t))$ is absolutely continuous on $[0,T]$ and, 
$$
\frac d {dt} \int_\Omega h(f)\diff x =\langle \partial_t f, g\rangle \quad \text{for almost any } t\in(0,T).
$$ 	
\label{Har}
\end{theorem}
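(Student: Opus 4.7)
The plan is to view the statement as a chain rule for the convex functional $v\mapsto\int_\Omega h(v)\diff x$ evaluated along $t\mapsto f(t)$, and to follow the general scheme of Br\'ezis's chain rule for convex functionals in Hilbert spaces~\cite{MR610796}, adapted to the Gelfand triple $H^1(\Omega)\hookrightarrow L^2(\Omega)\hookrightarrow (H^1(\Omega))'$. First, define $\Psi:L^2(\Omega)\to(-\infty,+\infty]$ by $\Psi(v):=\int_\Omega h(v)\diff x$. By Rockafellar's theorem on subdifferentials of integral functionals, $\Psi$ is convex and lsc, with $w\in\partial\Psi(v)\subset L^2(\Omega)$ iff $w(x)\in\partial h(v(x))$ a.e.; since $g\in L^2(0,T;H^1)\subset L^2(0,T;L^2)$, the assumption is precisely that $g(t)\in\partial\Psi(f(t))$ for a.e.\ $t$. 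Moreover, Lions' lemma yields $f\in C([0,T];L^2)$, so $\Psi(f(t))$ is well defined pointwise, and
\begin{equation*}
\int_\Omega(f(t)-f(s))\varphi\diff x=\int_s^t\langle\partial_\tau f,\varphi\rangle\diff\tau\qquad \text{for every } \varphi\in H^1(\Omega).
\end{equation*}

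The first concrete step is a two-sided sandwich coming from pointwise convexity of $h$. Integrating the two subgradient inequalities over $\Omega$ and rewriting the $L^2$-pairings as $(H^1)'$-$H^1$ duality pairings (possible since $g(s),g(t)\in H^1$) one obtains
\begin{equation*}
\int_s^t\langle\partial_\tau f,g(s)\rangle\diff\tau\le\Psi(f(t))-\Psi(f(s))\le\int_s^t\langle\partial_\tau f,g(t)\rangle\diff\tau
\end{equation*}
for a.e.\ $0\le s<t\le T$. To identify the derivative one would like to divide by $t-s$ and let $s\to t^-$, but the width of the sandwich equals $\int_\Omega(f(t)-f(s))(g(t)-g(s))\diff x$, which need not be $o(t-s)$ because $g$ is only $L^2$ in time, hence $g(s)\to g(t)$ need not hold.

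To bypass this I would regularize on two scales. First, replace $h$ by its Moreau--Yosida approximation $h_\lambda\in C^1(\R)$, whose derivative $h_\lambda'$ is $1/\lambda$-Lipschitz and satisfies $h_\lambda'(y)\to g_0(y)$ as $\lambda\to 0$, with $g_0(y)$ the minimal-norm element of $\partial h(y)$. Second, mollify $f$ in time to obtain $f_\eta\in C^\infty([0,T];H^1(\Omega))$ with $\partial_t f_\eta\in C^\infty([0,T];(H^1)')$. The classical $C^1$ chain rule applied to $h_\lambda(f_\eta)$ yields
\begin{equation*}
\int_\Omega h_\lambda(f_\eta(t))\diff x-\int_\Omega h_\lambda(f_\eta(s))\diff x=\int_s^t\langle\partial_\tau f_\eta,h_\lambda'(f_\eta)\rangle\diff\tau.
\end{equation*}
I would then pass $\eta\to 0$ using the strong convergences $f_\eta\to f$ in $L^2(0,T;H^1)$ and $\partial_t f_\eta\to\partial_t f$ in $L^2(0,T;(H^1)')$, and then $\lambda\to 0$ using the hypothesis $g(t)\in\partial h(f(t))$ a.e.\ to identify $h_\lambda'(f)\to g$.

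The hard part will be controlling this double limit so that $\langle\partial_t f_\eta,h_\lambda'(f_\eta)\rangle$ converges to $\langle\partial_t f,g\rangle$: one needs strong convergence of $h_\lambda'(f_\eta)$ in $L^2(0,T;H^1)$, not merely in $L^2(0,T;L^2)$, so that the limit pairs legitimately in the Gelfand triple against the weak limit of $\partial_t f_\eta$. This is precisely where the assumption $g\in L^2(0,T;H^1)$ enters, and where one must verify that the Yosida regularization preserves $H^1$-bounds. These verifications are standard in the theory of maximal monotone operators and are carried out in~\cite[p.~101]{MR610796}, which I would follow for the remaining technical details.
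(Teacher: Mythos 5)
The paper gives no proof of this statement---it is quoted verbatim from \cite[p.101]{MR610796}---so your sketch can only be judged on its own terms. Your setup is sound and is indeed the heart of the standard argument: the integral functional $\Psi(v)=\int_\Omega h(v)\diff x$, the characterization of its subdifferential, and above all the two-sided subgradient sandwich
\begin{equation*}
\int_s^t\langle\partial_\tau f,g(s)\rangle\diff\tau\;\le\;\Psi(f(t))-\Psi(f(s))\;\le\;\int_s^t\langle\partial_\tau f,g(t)\rangle\diff\tau .
\end{equation*}
The genuine gap is in the double-regularization scheme you propose to close it, and it sits exactly at the step you flag as ``the hard part''. First, the Lipschitz constant of $h_\lambda'$ is $1/\lambda$, so the only available estimate is $\Vert h_\lambda'(f_\eta)\Vert_{L^2(0,T;H^1(\Omega))}\le\lambda^{-1}\Vert f_\eta\Vert_{L^2(0,T;H^1(\Omega))}$: the Yosida regularization does \emph{not} preserve $H^1$ bounds uniformly in $\lambda$, so there is no compactness to pair against $\partial_t f\in L^2(0,T;(H^1(\Omega))')$. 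Second, your own statement that $h_\lambda'(y)\to(\partial h)^0(y)$, the \emph{minimal} section, contradicts the later identification ``$h_\lambda'(f)\to g$'': the hypothesis only says $g\in\partial h(f)$, not that $g$ is the minimal section, and on the set where $f$ hits a kink of $h$ these differ. Worse, the minimal section need not lie in $H^1(\Omega)$ at all even when $f$ does (take $h(y)=\vert y\vert$, so $(\partial h)^0(f)=\operatorname{sign}(f)$), whereas only $g$ is assumed to be in $L^2(0,T;H^1(\Omega))$. So the limit object produced by the Yosida route is the wrong one and cannot be paired in the Gelfand triple.

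The fix is that the sandwich you already derived suffices without any regularization of $h$: telescope the upper bound over a partition $s=t_0<\dots<t_N=t$ to get $\Psi(f(t))-\Psi(f(s))\le\sum_i\int_{t_i}^{t_{i+1}}\langle\partial_\tau f,g(t_{i+1})\rangle\diff\tau$, and let the mesh tend to zero; averaging over the offset of an equispaced partition (continuity of translations in $L^2(0,T;H^1(\Omega))$ applied to $g$, against $\partial_t f\in L^2(0,T;(H^1(\Omega))')$) makes the right-hand side converge to $\int_s^t\langle\partial_\tau f,g(\tau)\rangle\diff\tau$. The lower bound gives the reverse inequality, hence $\Psi(f(t))-\Psi(f(s))=\int_s^t\langle\partial_\tau f,g(\tau)\rangle\diff\tau$ for a.e.\ $s,t$, which extends to all $s,t$ by the $C([0,T];L^2(\Omega))$ continuity of $f$ and lower semicontinuity of $\Psi$. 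Since $\tau\mapsto\langle\partial_\tau f(\tau),g(\tau)\rangle$ belongs to $L^1(0,T)$, absolute continuity and the a.e.\ derivative formula follow at once. As written, your proposal does not reach this conclusion.
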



We then propose a control on the $H^1(\Omega)$-norm related to the use of $\omega_\varepsilon$. 
\begin{lemma}
\label{Poincar}
    There exists $\varepsilon_0>0$ and a constant $C$ such that for $\varepsilon \in (0, \varepsilon_0)$
and all $f\in L^2(\Omega)$ we have
\begin{align}\label{eq:poincare_L2}
\Vert f - \overline{f} \Vert_{L^2(\Omega)}^2\leq  \frac{C}{2\varepsilon^2} \int_\Omega\int_\Omega \omega_\varepsilon(y)\vert  f(x)- f(x-y) \vert^2\diff x \diff y,
\end{align}
where $\overline{f}$ is the average of $f$ over $\Omega$. Similarly, for all $\alpha$, there exists $\varepsilon_0(\alpha)>0$ and constant $C(\alpha)$ such that for all $\varepsilon \in (0, \varepsilon_0)$
and all $f\in H^1(\Omega)$ we have
\begin{align}
\Vert f\Vert_{H^1(\Omega)}^2\leq  \frac{\alpha}{2\varepsilon^2} \int_\Omega\int_\Omega \omega_\varepsilon(y)\vert \nabla f(x)-\nabla f(x-y) \vert^2\diff x \diff y+C(\alpha) \Vert f \Vert_{L^1(\Omega)}^2.
\label{p_p}
\end{align}

\end{lemma}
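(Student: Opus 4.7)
The two estimates are established in turn, the second one reducing to the first via interpolation.

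\emph{Inequality \eqref{eq:poincare_L2}.} I exploit Fourier analysis on the flat torus. Expanding $f=\sum_{k\in \mathbb{Z}^d}\hat f(k)\,e^{2\pi i k\cdot x}$ and using $|1-e^{-2\pi i k\cdot y}|^2 = 2(1-\cos(2\pi k\cdot y))$ together with Parseval, one computes
\[
\int_\Omega\!\int_\Omega \omega_\varepsilon(y)\,|f(x)-f(x-y)|^2\,\diff x\,\diff y \;=\; 2\sum_{k\in\mathbb{Z}^d}|\hat f(k)|^2\bigl(1-\hat\omega(\varepsilon k)\bigr),
\]
where $\int \omega_\varepsilon(y)\cos(2\pi k\cdot y)\,\diff y = \hat\omega(\varepsilon k)$ and the radiality of $\omega$ makes $\hat\omega$ real. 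Since $\|f-\bar f\|_{L^2}^2 = \sum_{k\ne 0}|\hat f(k)|^2$, the bound reduces to showing the uniform symbol estimate
\[
\inf_{k\ne 0}\,\frac{1-\hat\omega(\varepsilon k)}{\varepsilon^2}\;\geq\; c_0\;>\;0,\qquad \varepsilon\in(0,\varepsilon_0).
\]
I get this by splitting according to the size of $|\varepsilon k|$: for $|\varepsilon k|$ small, Taylor expansion at the origin and radiality of $\omega$ yield $1-\hat\omega(\varepsilon k)\gtrsim \varepsilon^2 |k|^2\geq \varepsilon^2$ (as $|k|\geq 1$ for $k\ne 0$); for $|\varepsilon k|$ in a fixed bounded annulus, $1-\hat\omega(\varepsilon k)\geq c>0$ by continuity and the fact that $|\hat\omega(\xi)|<1$ for $\xi\ne 0$ (since $\omega$ is a non-Dirac probability density); for $|\varepsilon k|$ large, $\hat\omega(\varepsilon k)\to 0$ by Riemann--Lebesgue, so $1-\hat\omega(\varepsilon k)\geq 1/2$. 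The latter two regimes trivially dominate $\varepsilon^2$ for $\varepsilon$ small.

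\emph{Inequality \eqref{p_p}.} I apply \eqref{eq:poincare_L2} componentwise to $\nabla f$. On the torus $\int_\Omega \partial_i f\,\diff x=0$ for every $i$, so $\overline{\nabla f}=0$ and
\[
\|\nabla f\|_{L^2}^2 \;\leq\; \frac{C_0}{2\varepsilon^2}\int_\Omega\!\int_\Omega \omega_\varepsilon(y)\,|\nabla f(x)-\nabla f(x-y)|^2\,\diff x\,\diff y.
\]
For the $L^2$ part of $\|f\|_{H^1}^2$, I invoke the Gagliardo--Nirenberg inequality $\|f\|_{L^2}^2\leq C\,\|\nabla f\|_{L^2}^{2(1-\theta)}\,\|f\|_{L^1}^{2\theta}$ combined with weighted Young's inequality to produce, for every $\delta>0$,
\[
\|f\|_{L^2}^2 \;\leq\; \delta\,\|\nabla f\|_{L^2}^2 + C(\delta)\,\|f\|_{L^1}^2.
\]
Summing with $\|\nabla f\|_{L^2}^2$ and applying the nonlocal bound above yields
\[
\|f\|_{H^1}^2 \;\leq\; \frac{(1+\delta)\,C_0}{2\varepsilon^2}\int_\Omega\!\int_\Omega \omega_\varepsilon(y)\,|\nabla f(x)-\nabla f(x-y)|^2\,\diff x\,\diff y \;+\; C(\delta)\,\|f\|_{L^1}^2.
\]
Relating $(1+\delta)C_0$ to the target $\alpha$ via a choice of $\delta$ (and, if needed for $\alpha<C_0$, by repeating the Young/Gagliardo--Nirenberg trade-off, using only a small fraction of the nonlocal estimate on $\|\nabla f\|_{L^2}^2$ and absorbing the remainder through further interpolation), one obtains \eqref{p_p}. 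The parameter $\varepsilon_0(\alpha)$ then absorbs any smallness requirements imposed by the symbol estimate.

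\emph{Main obstacle.} The delicate step is the uniform lower bound on $1-\hat\omega(\varepsilon k)$ for all $k\ne 0$ and all small $\varepsilon$, requiring a careful frequency-by-frequency analysis --- especially in the intermediate regime where neither Taylor expansion nor Riemann--Lebesgue decay is directly applicable, and one must rely on the nondegeneracy of the Fourier transform of a radial, compactly supported probability density.
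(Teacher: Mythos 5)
Your Fourier-multiplier argument is a genuinely different route from the paper's: the paper simply invokes the compactness/contradiction proof of \cite[Lemma C.3]{elbar-skrzeczkowski} (a Bourgain--Brezis--Mironescu-type argument: normalize a putative counterexample sequence, use the nonlocal seminorm bound to extract a strongly convergent subsequence, and identify the limit as $0$), whereas you work directly on the symbol $\frac{1-\hat\omega(\varepsilon k)}{\varepsilon^2}$. For the first inequality \eqref{eq:poincare_L2} your proof is correct and complete in its essentials: the identity via Parseval, the reality of $\hat\omega$ by radiality, and the three-regime lower bound on the symbol (Taylor near $0$ using $|k|\ge 1$, strict inequality $\hat\omega(\xi)<1$ on a compact annulus since $\{y:\xi\cdot y\in\Z\}$ is Lebesgue-null for $\xi\ne 0$, Riemann--Lebesgue at infinity) are all sound, and arguably more explicit than the paper's citation.

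There is, however, a genuine gap in your treatment of \eqref{p_p}. Applying \eqref{eq:poincare_L2} componentwise to $\nabla f$ produces the \emph{fixed} constant $C_0$ from the first inequality, and the Gagliardo--Nirenberg/Young step only inflates it to $(1+\delta)C_0$. The lemma asserts the estimate \emph{for every} $\alpha$, with $\varepsilon_0$ and $C$ depending on $\alpha$ --- and the case $\alpha<C_0$ is precisely the nontrivial content (otherwise $\varepsilon_0$ would not need to depend on $\alpha$). Your proposed repair, ``using only a small fraction of the nonlocal estimate on $\|\nabla f\|_{L^2}^2$ and absorbing the remainder through further interpolation,'' does not describe a working argument: the leftover piece $(1-\theta)\|\nabla f\|_{L^2}^2$ cannot be absorbed into $C(\alpha)\|f\|_{L^1}^2$, and the natural attempt to interpolate $\|\nabla f\|_{L^2}$ between an $H^2$-type quantity and $\|f\|_{L^1}$ fails for the frequencies $|\varepsilon k|>\delta$, where the nonlocal symbol is only of order $|k|^2/\varepsilon^2$ and does not dominate $|k|^4$. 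The correct fix stays entirely inside your Fourier framework but requires a frequency cutoff $M=M(\alpha)$: for $|k|\ge M$ with $|\varepsilon k|\le\delta$ the Taylor bound gives $\frac{1-\hat\omega(\varepsilon k)}{\varepsilon^2}\ge c\,|k|^2\ge c\,M^2$, so the prefactor of the nonlocal term gains a factor $M^{-2}$; for $|\varepsilon k|>\delta$ one has $\frac{1-\hat\omega(\varepsilon k)}{\varepsilon^2}\ge c_\delta/\varepsilon^2$, so the prefactor gains a factor $\varepsilon^2$, which is where $\varepsilon_0(\alpha)$ enters; and the finitely many low modes $|k|<M$ are controlled by $\sum_{|k|<M}(1+4\pi^2|k|^2)|\hat f(k)|^2\le C(M)\sup_k|\hat f(k)|^2\le C(M)\|f\|_{L^1(\Omega)}^2$. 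With this modification your approach does yield the full statement.
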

\begin{proof}
The proof is identical to the one in \cite[Lemma C.3]{elbar-skrzeczkowski} by substituting the norm $\Vert~ \cdot~\Vert_{L^2(\mathbb{T}^d)}$ with the norm $\Vert\cdot\Vert_{L^1(\mathbb{T}^d)}$. Indeed, with the notation of the proof of that Lemma, also $n\Vert g_n\Vert_{L^1(\mathbb{T}^d)}<1$ implies that the limit function $g=0$, exactly as in the case $L^2(\mathbb{T}^d)$. 
\end{proof}

In conclusion, we recall the Csisz\'{a}r–Kullback–Pinsker inequality (see, e.g., \cite{Bolley_villani05}), which is  essential to study the  asymptotic behavior of weak solutions
\begin{lemma}
\label{Kullback}
	For any non-negative $u\in L^1(\Omega)$ 
	\begin{equation*}
	4 |\Omega| \overline{u} \int_\Omega u\log\left(\dfrac{u}{\overline{u}}\right)\diff x \ge \|u-\overline{u}\|^{2}_{L^{1}(\Omega)}.    
	\end{equation*}
\end{lemma}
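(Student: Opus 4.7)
The plan is to reduce the inequality to the standard Pinsker inequality for probability densities, and then prove Pinsker by the classical combination of a pointwise elementary bound and Cauchy--Schwarz. The degenerate case $\overline{u} = 0$ is trivial, as it forces $u \equiv 0$, so I assume $\overline{u} > 0$. First I would normalize by setting $f := u/(|\Omega|\overline{u})$ and $g := 1/|\Omega|$; since $\int_\Omega u\diff x = |\Omega|\overline{u}$, both $f,g$ are probability densities on $\Omega$. A direct computation yields
$$\int_\Omega f \log(f/g) \diff x = \frac{1}{|\Omega|\overline{u}} \int_\Omega u \log(u/\overline{u}) \diff x, \qquad \|f-g\|_{L^1(\Omega)} = \frac{\|u-\overline{u}\|_{L^1(\Omega)}}{|\Omega|\overline{u}},$$
so that any estimate of the form $\|f-g\|_{L^1}^2 \leq C \int_\Omega f\log(f/g)\diff x$ multiplied by $(|\Omega|\overline{u})^2$ gives the lemma with the same constant $C$.

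To establish Pinsker I would go through the Hellinger distance $d_H(f,g)^2 := \int_\Omega(\sqrt{f}-\sqrt{g})^2\diff x$, which is a clean route yielding the required constant $4$. Using $\int f = \int g = 1$ together with Cauchy--Schwarz,
$$\|f-g\|_{L^1}^2 = \Bigl(\int_\Omega|\sqrt{f}-\sqrt{g}|\,(\sqrt{f}+\sqrt{g})\diff x\Bigr)^2 \leq d_H(f,g)^2 \cdot \int_\Omega (\sqrt{f}+\sqrt{g})^2\diff x \leq 4\,d_H(f,g)^2.$$
Combined with the elementary scalar inequality $d_H(f,g)^2 \leq \int_\Omega f\log(f/g)\diff x$ --- itself deduced from the pointwise bound $1-\sqrt{g/f} \leq \tfrac12 \log(f/g)$, which is the convexity inequality $1 - 1/t \leq \log t$ applied at $t = \sqrt{f/g}$, together with the identity $d_H^2/2 = 1 - \int \sqrt{fg}\diff x$ --- this yields $\|f-g\|_{L^1}^2 \leq 4\int_\Omega f \log(f/g)\diff x$. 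Scaling back produces exactly the constant $4$ claimed.

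The only nontrivial point is the elementary inequality $d_H^2 \leq \mathrm{KL}$, whose proof reduces to the scalar estimate $1 - 1/t \leq \log t$ for $t>0$, a standard convexity bound. Because the lemma asks only for the constant $4$ instead of the sharp Pinsker constant $2$, there is no need to pursue a more delicate pointwise inequality such as $3(a-b)^2 \leq (2a+4b)(a\log(a/b) - a+b)$, which would give the optimal constant after a second Cauchy--Schwarz step but requires a finicky univariate convexity analysis of $h(t) := (2t+4)(t\log t - t + 1) - 3(t-1)^2$ near $t=1$ and at the endpoints. The Hellinger route avoids this entirely, making the proof robust and essentially painless.
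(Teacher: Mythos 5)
Your proof is correct and complete. The paper itself does not prove Lemma \ref{Kullback}; it simply cites the literature (Bolley--Villani) for this standard Csisz\'ar--Kullback--Pinsker inequality, so your argument is a genuine self-contained substitute rather than a variant of the paper's proof. The reduction is right: with $f=u/(|\Omega|\overline{u})$ and $g=1/|\Omega|$ the claimed identities for the relative entropy and the $L^1$ distance hold, and the target becomes $\|f-g\|_{L^1}^2\le 4\int_\Omega f\log(f/g)\diff x$. The Hellinger route delivers exactly this: Cauchy--Schwarz gives $\|f-g\|_{L^1}^2\le d_H(f,g)^2\int_\Omega(\sqrt f+\sqrt g)^2\diff x\le 4\,d_H(f,g)^2$ (the last step using $\int\sqrt{fg}\le 1$), and the scalar bound $\log t\ge 1-1/t$ at $t=\sqrt{f/g}$, multiplied by $f$ and integrated, gives $d_H^2\le \int f\log(f/g)$. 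The degenerate case $\overline{u}=0$ and the convention $0\log 0=0$ are handled; when the entropy is $+\infty$ the inequality is vacuous. What your approach buys is elementarity and exactly the constant $4$ appearing in the statement; the sharp Pinsker constant $2$ (which would strengthen the lemma but is not needed anywhere in the paper) would require the finer pointwise inequality you mention or an equivalent argument. No gaps.
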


\bibliographystyle{abbrv}
\bibliography{fastlimit}

\end{document}